\newcommand{\grad}{\nabla}
\newcommand{\dx}{\ d \boldsymbol{x}}
\newcommand{\ds}{\ d s}
\newtheorem{lemma}{Lemma}
 \newtheorem{remark}{Remark}
\title{\textbf{Hybrid discontinuous Galerkin discretisation and domain decomposition preconditioners for the Stokes problem% with non standard boundary conditions
}} \author[1]{Gabriel R. Barrenechea}
\author[1]{Micha\l{} Bosy \thanks{Corresponding author: Phone: +39 0382 985690, E-mail: michal.bosy@unipv.it, Fax: +39 0382 985602}}
\author[1]{Victorita Dolean}
\author[2]{Fr\'{e}d\'{e}ric Nataf}
\author[2]{Pierre-Henri Tournier}
\affil[1]{\small \textit{Department of Mathematics and Statistics, University of Strathclyde, 26 Richmond Street, G1 1XH Glasgow, United Kingdom}}
\affil[2]{\small \textit{Laboratory J.L. Lions, UPMC, CNRS UMR7598, 4 place Jussieu 75005 Paris, France}}
\affil[2]{\small \textit{INRIA Paris, EPC Alpines, 2 Rue Simone IFF, 75012 Paris, France}}
\date{\today}
\providecommand{\keywords}[1]{\textbf{Key words.} #1}
\begin{document}
\maketitle

\ 

\begin{abstract}
Solving the Stokes equation by an optimal domain decomposition method derived algebraically involves the use of non standard interface conditions whose discretisation is not trivial. For this reason the use of approximation methods such as hybrid discontinuous Galerkin appears as an appropriate strategy: on the one hand they provide the best compromise in terms of the number of degrees of freedom in between standard continuous and discontinuous Galerkin methods, and on the other hand the degrees of freedom used in the non standard interface conditions are naturally defined at the boundary between elements.In this paper we introduce the coupling between a well chosen discretisation method (hybrid discontinuous Galerkin) and a novel and efficient domain decomposition method to solve the Stokes system. We present the detailed analysis of the hybrid discontinuous Galerkin method for the Stokes problem with non standard boundary conditions. This analysis is supported by numerical evidence. In addition, the advantage of the new preconditioners over more classical choices is also supported by numerical experiments.
\end{abstract}

\keywords{Stokes problem, hybrid discontinuous Galerkin methods, domain decomposition, restricted additive Schwarz methods \\

\textbf{Mathematics Subject Classification (2000):} 65F10, 65N22, 65N30, 65N55}

% \thispagestyle{empty}
% 
% \newpage
% 
% \tableofcontents
% 
% \newpage

\section{Introduction}
\label{sec:introduction}

Discontinuous Galerkin (dG) methods have been first introduced in the early 1970s~\cite{reed1973triangular} and they have benefited of a
wide interest from the scientific community. The main advantages of these methods are their generality and flexibility as they can be used for a big variety of partial differential equations on unstructured meshes. Moreover, they can preserve local properties such as mass and momentum conservation while ensuring a high order accuracy. However, the cost of these advantages is a larger amount of
degrees of freedom in comparison to the continuous Galerkin methods~\cite{MR2050138} for the same approximation order.

A good compromise between the previous methods, while preserving the high order, are the hybridised versions of dG using divergence conforming spaces such as Raviart-Thomas (RT) and Brezzi-Douglas-Marini (BDM)~\cite{MR3097958}. These methods are a subset of the hybrid discontinuous Galerkin (hdG) methods introduced in~\cite{MR2485455} for second order elliptic problems. 

The hdG methods for the three-dimensional Stokes equation have been first introduced in \cite{MR2485446}. The authors present there the mixed formulation of hdG methods defined locally on each element. They consider many types of boundary conditions that involve normal and tangential
velocity, pressure, and tangential stress. The formulations of the methods are similar, the only difference is in the choice of the
numerical traces. The hdG methods for the Stokes equation with Dirichlet boundary conditions have been analysed in~\cite{MR2772094}
where the authors show the optimal convergence of the error for hdG methods and present different possibilities to obtain
superconvergence. On the other hand, in \cite{MR3047948} a hdG method for two or three dimensional Stokes equation with Dirichlet boundary conditions which is hybridisation of a symmetric interior penalty Galerkin method~\cite{MR1974180} is presented and analysed. In a further
paper~\cite{EggerW13} this approach is extended to Darcy, and coupled Darcy-Stokes flows. The new formulation includes different degrees of polynomials for finite element spaces associated with different variables. 

In~\cite{lehrenfeld2016high} the authors consider the Navier-Stokes problem, which can be seen as an extension of the Stokes and Stokes-Brinkman problems. To obtain the global mixed formulation, the authors choose $H(div)$-conforming finite elements.  Moreover, they introduce the formulation that includes a projection onto a space of lower polynomial degree. Such a modification allowed them to use fewer degrees of
freedom. In addition, it helped also to establish a connection between the hybrid high-order~\cite{MR3283758} and the hdG methods that presented authors of both methods in their joint paper~\cite{cockburn2015bridging}.

Despite that the hdG methods with projection allow us to decrease the number of degrees of freedom, nowadays most of the problems arise with
linear systems that are too big for direct solvers. Thus, parallel solvers are becoming increasingly important in scientific computing. A natural paradigm to take advantage of modern parallel architectures is domain decomposition method, see e.g. \cite{Smith:1996:DPM,Quarteroni:1999:DDM,Toselli:2005:DDM,MR3450068}. Domain decomposition methods are iterative solvers based on a decomposition of a global domain into subdomains. At each iteration, one (or two) boundary value problem(s) are solved in each subdomain and the continuity of the solution at the interfaces between subdomains is only satisfied at convergence of the iterative procedure. The partial differential equation is the one of the global problem.

For Additive Schwarz methods and Schur complement methods, the boundary conditions on the interfaces between subdomains, a.k.a. interface conditions (IC), are Dirichlet or Neumann boundary conditions. For Poisson problems, there is a consensus on these IC. But for systems of partial differential equations such as elasticity or Stokes problems, it has been envisioned that normal velocity-tangential flux (NVTF) or tangential velocity-normal flux (TVNF) IC should be superior to the pure velocity (Dirichlet like) or pure stress (Neumann like) IC, see \cite[Section~6.6]{MR3450068} and references therein. In \cite{Gosselet:2006:NDD}, it was motivated by symmetry considerations. In \cite{Dolean:NDD:09,Cluzeau:2012:STD,Cluzeau:2012:PTS}, they were obtained by an analysis of the systems of partial differential
equations by symbolic techniques mainly the Smith factorization~\cite{smith1861systems}. Similar attempts to derive more
intrinsic IC to the nature of the equation to solve were derived \cite{Dolean:NDD:06} for the Euler system.

Due to the difficulty of implementing these IC previous numerical tests were restricted to decompositions where boundaries of
subdomains are rectilinear so that the normal to the interface is easy
to define. The underlying domain decomposition method was a Schur
complement method. That is mainly the reason we have considered and
analysed a specific hdG method where this kind of degrees of freedom
are naturally present.

In this paper we want to combine appropriate hdG discretisation and
the associated domain decomposition methods mentioned above using
non standard IC. The combination of the two is
meant to provide the competitive solving strategy for this kind of partial differential equation
system. A different, but somewhat related, approach can be found in~\cite{de2014simple} where a dG type discretisation is coupled to a discrete Helmholtz decomposition to propose some preconditioners. 

An approach similar to the one presented in this work, but using completely discontinuous spaces, is given in~\cite{MR3486522}. Our analysis is related to the one in that paper, but the method presented herein uses $H(div)$-conforming spaces, which implies in turn that the Lagrange multipliers are scalar valued. The combination of these two facts reduces the number of degrees of freedom significantly. In addition, the use of non-standard boundary conditions (motivated by the newly defined domain decomposition preconditioners) makes the analysis somehow more involved.

The rest of the paper is organised as follows. To start with, we introduce the problem and notation in
Section~\ref{sec:notation}. In Section~\ref{sec:hdg} we present the
hybridisation of a symmetric interior penalty Galerkin method that
allows us to impose the TVNF and NVTF boundary
conditions in quite a natural way. The formulation is similar to the one
from~\cite{lehrenfeld2016high} with Dirichlet boundary conditions. In
addition to different kinds of boundary conditions, we included the projection to reduce the number of degrees of freedom. Our analysis follows the one from~\cite{lehrenfeld2016high} (see also~\cite{Lehrenfeldhesis} for a more detailed version). Thanks to the hdG discretisation, we can consider domain decomposition methods with arbitrary shape of the interfaces and Schwarz type methods. In Section~\ref{sec:dd_one_level}, the Additive Schwarz methods are defined at the algebraic level. Section~\ref{sec:numerics} contains the numerical results, including the convergence validation of the hdG method and a comparison of the domain decomposition preconditioners. Finally, some conclusions are drawn.

%%%%%%%%%%%%%%%%%%%%%%%%%%%%%%%%%%%%%%%%%%%%%%%%%%%%%%%%%%%%%%%%%%%%%%%%%%%%%%%%%%%%%%%%%%%%%%%%%%%%%%%%%%%%
% Problem
%%%%%%%%%%%%%%%%%%%%%%%%%%%%%%%%%%%%%%%%%%%%%%%%%%%%%%%%%%%%%%%%%%%%%%%%%%%%%%%%%%%%%%%%%%%%%%%%%%%%%%%%%%%%

\section{Notation and preliminary results}
\label{sec:notation}

Let $\Omega$ be an open polygonal domain in $\mathbb{R}^2$ with Lipschitz boundary $\Gamma := \partial \Omega$. We use boldface font for tensor or vector variables e.g. $\boldsymbol{u}$ is a velocity vector field. The scalar variables will be italic e.g. $p$ denotes pressure scalar value. We define the stress tensor $\boldsymbol{\sigma} := \nu \grad \boldsymbol{u} - p \boldsymbol{I}$ and the flux as $\boldsymbol{\sigma_n} := \boldsymbol{\sigma} \ \boldsymbol{n}$. 
%%%%%%%%%%%%%%%%%%%%%%%%%%%%%%%%%%%%%%%%%%%%%%%%%%
%% Definition of normal and tangential part
In addition we denote normal and tangential components as follows ${u_n} := \boldsymbol{u} \cdot \boldsymbol{n}$, $u_t := \boldsymbol{u}  \cdot \boldsymbol{t}$, $\sigma_{nn} := \boldsymbol{\sigma_n} \cdot \boldsymbol{n}$, $\sigma_{nt} := \boldsymbol{\sigma_n} \cdot \boldsymbol{t}$,
where $\boldsymbol{n}$ is the outward unit normal vector to the
boundary $\Gamma$ and $\boldsymbol{t}$ is a vector tangential to
$\Gamma$ such that $\boldsymbol{n} \cdot \boldsymbol{t} =
0$.

 For $D \subset \Omega$, we use the standard $L^2(D)$ space with following norm
\begin{eqnarray*}
\|f\|_D^2 := \int_D f^2 \dx & \mbox{for all } f \in L^2(D).
\end{eqnarray*}
%%%%%%%%%%%%%%%%%%%%%%%%%%%%%%%%%%%%%%%%%%%%%%%%%%
%% Spaces
Let us define following Sobolev spaces
\begin{align*}
H^m(D) & :=  \left\{v \in L^2(D): \ \forall \ |\boldsymbol{\alpha}| \leq m \ \partial^{\boldsymbol{\alpha}} {v} \in L^2(D)\right\} \mbox{ for } m \in \mathbb{N}, \\
H\left(div, D\right) & := \left\{\boldsymbol{v} \in [L^2(D)]^2: \ \nabla \cdot \boldsymbol{v} \in L^2(D)\right\},
\end{align*} 
where, for $\boldsymbol{\alpha} = (\alpha_1, \alpha_2) \in \mathbb{N}^2$ and
$|\boldsymbol{\alpha}| = \alpha_1+\alpha_2$ we denote $\partial^{\boldsymbol{\alpha}} = \frac{\partial^{|\boldsymbol{\alpha}|}}{\partial x_1^{\alpha_1} \partial x_2^{\alpha_2}}$.
In addition, we will use following standard semi-norm and norm for the Sobolev space $H^m(D)$ for $m \in \mathbb{N}$ 
\begin{align*}
|f|_{H^m(D)}^2 := \sum_{|\boldsymbol{\alpha}| = m} \|\partial^{\boldsymbol{\alpha}} f\|_D^2 && \|f\|_{H^m(D)}^2 := \sum_{k = 0}^m |f|_{H^k(D)}^2 && \mbox{for all } f \in H^m(D).
\end{align*}
%%%%%%%%%%%%%%%%%%%%%%%%%%%%%%%%%%%%%%%%%%%%%%%%%%%%%%%%%%%%%%%%%%%%%%%%%%%%%%%%%%%%%%%%%%%%%%%%%%%%%%%%%%%%
% Problem
In this work we consider the two dimensional Stokes problem:
\begin{equation}
\label{eq:stokes}
\left\{
\begin{array}{rclclr}
 -\nu \Delta \boldsymbol{u} & \ + & \grad p & = & \boldsymbol{f} & \mbox{in } \Omega, \\
 & & \nabla \cdot \boldsymbol{u} & = & 0 & \mbox{in } \Omega,
 \end{array}
\right.
\end{equation} 
where $\boldsymbol{u}: \bar{\Omega} \rightarrow \mathbb{R}^2$ is the unknown
velocity field, $p:\bar{\Omega} \rightarrow \mathbb{R}$ the pressure, $\nu >
0$ the viscosity which is considered to be constant and $\boldsymbol{f} \in [L^2(\Omega)]^2$ is a given function. 
%%%%%%%%%%%%%%%%%%%%%%%%%%%%%%%%%%%%%%%%%%%%%%%%%%
%% boundary conditions
For ${g} \in L^2(\Gamma)$ we consider two types of boundary conditions
\begin{itemize}
  \item tangential-velocity and normal-flux (TVNF) 
  \begin{equation}
  \left\{
  \begin{array}{rcll}
  \label{eq:TVNF}
   {\sigma_{nn}} & = & {g} & \mbox{ on } \Gamma, \\
   u_t & = & 0 & \mbox{ on } \Gamma,
  \end{array}
  \right. 
  \end{equation}
  \item normal-velocity and tangential-flux (NVTF) 
\begin{equation}
 \label{eq:NVTF}
 \left\{
 \begin{array}{rcll}
 {\sigma_{nt}} & = & {g} & \mbox{ on } \Gamma, \\
 {u_n} & = & 0 & \mbox{ on } \Gamma,
 \end{array}
 \right. 
\end{equation}
\end{itemize}
which together with \eqref{eq:stokes} define two boundary value
problems. We will detail the analysis for the TVNF boundary value problem
\begin{equation}
\label{eq:stokes_TVNF}
\left\{
\begin{array}{rclclr}
 -\nu \Delta \boldsymbol{u} & \ + & \grad p & = & \boldsymbol{f} & \mbox{in } \Omega, \\
 & & \nabla \cdot \boldsymbol{u} & = & 0 & \mbox{in } \Omega, \\
 & & {\sigma_{nn}} & = & {g} & \mbox{ on } \Gamma, \\
 & & u_t & = & 0 & \mbox{ on } \Gamma,
 \end{array}
\right.
\end{equation}
since considering the NVTF boundary conditions~\eqref{eq:NVTF} instead
is very similar. We will just add a remark when necessary to stress
the differences between them. The restriction to homogeneous Dirichlet conditions on $u_t$ is made only to simplify the presentation.

%%%%%%%%%%%%%%%%%%%%%%%%%%%%%%%%%%%%%%%%%%%%%%%%%%
%% Triangulation
Let $\left\{\mathcal{T}_h\right\}_{h > 0}$ be a regular family of triangulations of $\bar{\Omega}$ made of triangles. For each triangulation $\mathcal{T}_h$, $\mathcal{E}_h$ denotes the set of its edges. In addition, for each of element $K \in \mathcal{T}_h$, $h_K := \mbox{diam}(K)$, and we denote $h := \max_{K \in \mathcal{T}_h} h_K$. 
We define following Sobolev spaces on the triangulation $\mathcal{T}_h$ and the set of all edges in $\mathcal{E}_h$
\begin{align*}
L^2(\mathcal{E}_h) & :=  \left\{v: \ v|_E \in L^2(E) \ \forall \ E \in \mathcal{E}_h \right\}, \\
H^m(\mathcal{T}_h) & :=  \left\{v \in L^2(\Omega): \ {v}|_K \in H^m(K) \ \forall \ K \in \mathcal{T}_h \right\} \mbox{ for } m \in \mathbb{N},
\end{align*}
with the corresponding broken norms.

 The following results will be very useful in what follows.

%%%%%%%%%%%%%%%%%%%%%%%%%%%%%%%%%%%%%%%%%%%%%%%%%%
%% Inverse, trace inequalities
\begin{lemma}[Inverse and trace inequalities]
There exist $C, C_{max} > 0$, independent of $h_K$, such that for all $K \in \mathcal{T}_h$ and polynomial function $v$ in $K$ the following inequalities hold
\begin{align}
\label{l:local_inverse}
|v|_{H^s(K)} &\leq C h_K^{m-s} |v|_{H^m(K)}, \ 0 \leq m \leq s, \\
\label{l:discrete_trace_inverse}
h_K^{\frac{1}{2}} \|v\|_{\partial K} &\leq C_{max} \|{v}\|_{K}.
\end{align} 
Moreover, there exists $C > 0$, independent of $h_K$, such that for any $v \in H^1(K)$, the following local trace inequality holds
\begin{equation}
\label{l:local_trace}
\|v\|_{\partial K} \leq C \left(h_K^{-\frac{1}{2}} \|v\|_{K} + h_K^{\frac{1}{2}} |v|_{H^1(K)}\right).
\end{equation}
\end{lemma}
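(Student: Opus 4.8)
The plan is to establish the three inequalities separately, each by a scaling (pull-back to a reference element) argument combined with the regularity of the triangulation, which is standard but worth spelling out.

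\textbf{Inverse inequality \eqref{l:local_inverse}.}
First I would fix the reference triangle $\hat{K}$ and, for a given $K \in \mathcal{T}_h$, the affine map $F_K:\hat{K}\to K$, $\hat{\boldsymbol{x}}\mapsto B_K\hat{\boldsymbol{x}}+b_K$. For a polynomial $v$ on $K$ set $\hat v:=v\circ F_K$, also a polynomial, of the same (fixed) degree. On the finite-dimensional space of polynomials of that degree on $\hat K$ all norms are equivalent, so $|\hat v|_{H^s(\hat K)}\le \hat C\,|\hat v|_{H^m(\hat K)}$ for $0\le m\le s$ with $\hat C$ depending only on $\hat K$ and the polynomial degree. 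Then I would transfer the two semi-norms back to $K$ using the chain rule: $|\hat v|_{H^r(\hat K)}$ is controlled by $\|B_K\|^r\,|\det B_K|^{-1/2}\,|v|_{H^r(K)}$ and vice versa. Combining these with the shape-regularity bounds $\|B_K\|\le C h_K$ and $\|B_K^{-1}\|\le C h_K^{-1}$ yields \eqref{l:local_inverse} with the factor $h_K^{m-s}$. (Note that $m-s\le 0$, so this is genuinely an inverse estimate.)

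\textbf{Discrete trace inverse inequality \eqref{l:discrete_trace_inverse} and local trace inequality \eqref{l:local_trace}.}
For \eqref{l:local_trace} I would start from the standard trace inequality on the reference element, $\|\hat v\|_{\partial\hat K}\le C\,\|\hat v\|_{H^1(\hat K)}$ valid for all $\hat v\in H^1(\hat K)$, then scale back: the surface measure on $\partial K$ scales like $|\det B_K|\,\|B_K^{-1}\|$ relative to $\partial\hat K$ (edge lengths scale like $h_K$), while the two volume terms scale as above, and assembling the powers of $h_K$ gives precisely $\|v\|_{\partial K}\le C(h_K^{-1/2}\|v\|_K+h_K^{1/2}|v|_{H^1(K)})$. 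For \eqref{l:discrete_trace_inverse} I would restrict to polynomials $v$ of the fixed degree and instead use norm equivalence on $\hat K$ to get $\|\hat v\|_{\partial\hat K}\le \hat C\,\|\hat v\|_{\hat K}$; scaling back this becomes $\|v\|_{\partial K}\le C\,h_K^{-1/2}\|v\|_K$, i.e. exactly \eqref{l:discrete_trace_inverse} after multiplying by $h_K^{1/2}$. Alternatively, \eqref{l:discrete_trace_inverse} follows from combining \eqref{l:local_trace} on polynomials with the case $s=1$, $m=0$ of \eqref{l:local_inverse}.

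\textbf{Main obstacle.}
There is no deep difficulty here; these are textbook estimates (see e.g. the standard finite-element references). The one point requiring care is bookkeeping the powers of $h_K$ in the scaling arguments — keeping track of $\|B_K\|$, $\|B_K^{-1}\|$ and $|\det B_K|\sim h_K^2$, together with the fact that shape-regularity is exactly what is needed to bound $\|B_K\|$ and $\|B_K^{-1}\|$ by $Ch_K$ and $Ch_K^{-1}$ with a constant independent of $K$ and $h$. I would therefore state the scaling lemma for $B_K$ once and then apply it uniformly to the three inequalities. Since these results are classical, in the paper it would be reasonable to give only the reference element argument for one of them and cite the literature for the remaining details.
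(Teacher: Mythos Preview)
Your proposal is correct and follows exactly the classical route the paper has in mind: the paper's proof simply cites standard references for \eqref{l:local_inverse} and \eqref{l:discrete_trace_inverse} and says \eqref{l:local_trace} ``follows by standard scaling arguments'', which is precisely the reference-element plus shape-regularity argument you spell out. Your closing remark that one could just cite the literature is in fact what the paper does.
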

\begin{proof}
For~\eqref{l:local_inverse} see~\cite[Lemma 1.138]{MR2050138} and for~\eqref{l:discrete_trace_inverse} see~\cite[Lemma 1.46]{MR2882148}. The discrete trace inequality~\eqref{l:local_trace} follows by standard scaling arguments. 
\end{proof}
 Now we will introduce the finite element spaces that discretise the above spaces. Let us consider the TVNF boundary value problem~\eqref{eq:stokes_TVNF}. Let $k \geq 1$.
%%%%%%%%%%%%%%%%%%%%%%%%%%%%%%%%%%%%%%%%%%%%%%%%%%
%% Discrete spaces
To discretise the velocity $\boldsymbol{u}$ we use the Brezzi-Douglas-Marini space (see~\cite[Section~2.3.1]{MR3097958}) 
\begin{align*}
 \boldsymbol{BDM_h^k} & := \left\{\boldsymbol{v_h} \in H\left(div, \Omega\right): \ \boldsymbol{v_h}|_K \in \left[\mathbb{P}_k\left(K\right)\right]^2 \ \forall \ {K \in \mathcal{T}_h}\right\}. 
\end{align*}
In addition, for $1 \leq m \leq k+1$ we denote $\Pi^k: [H^m(\Omega)]^2 \rightarrow \boldsymbol{BDM_h^k}$  the BDM projection defined in~\cite[Section~2.5]{MR3097958}.
 The hdG formulation includes a Lagrange multiplier over the internal edges. In order to propose a discretisation with fewer degrees of freedom, we discretise the Lagrange multiplier $\tilde{u}$ using the spaces
\begin{align*}
 M_{h}^{k-1} &:= \left\{\tilde{v}_h \in L^2\left(\mathcal{E}_h\right): \ \tilde{v}_h|_E \in \mathbb{P}_{k-1}\left(E \right) \ \forall \ {E \in \mathcal{E}_h} \right\}, \\ M_{h,0}^{k-1} &:= \left\{\tilde{v}_h \in M_{h}^{k-1}: \tilde{v}_h = 0 \mbox{ on } \Gamma\right\}. 
\end{align*} 
 Furthermore, we introduce for all $E \in \mathcal{E}_h$ the $L^2(E)$-projection $\Phi^{k-1}_E: L^2\left(E\right) \rightarrow \mathbb{P}_{k-1}\left(E\right)$ defined as follows. For every $\tilde{w} \in L^2\left(E\right)$, $\Phi^{k-1}_E(\tilde{w})$ is the unique element of $\mathbb{P}_{k-1}\left(E\right)$ satisfying 
\begin{equation}
\label{eq:L2edge_projection}
\int_{E} \Phi^{k-1}_E(\tilde{w}) {\tilde{v}_h} \ds = \int_{E} \tilde{w} {\tilde{v}_h} \ds \quad \forall \ {\tilde{v}_h} \in \mathbb{P}_{k-1}\left(E\right),
\end{equation}
and we denote $\Phi^{k-1}: L^2\left(\mathcal{E}_h\right) \rightarrow M_{h}^{k-1}$ defined as $\Phi^{k-1}|_E := \Phi^{k-1}_E$ for all $E \in \mathcal{E}_h$. 

 Let us denote $\boldsymbol{V_h} := \boldsymbol{BDM_h^k} \times M_{h,0}^{k-1}$. The pressure is discretised using the following space 
\begin{align*}
 Q_h^{k-1} & := \left\{q_h \in L^2\left(\Omega\right): \ q_h|_K \in \mathbb{P}_{k-1}\left(K\right) \ \forall \ {K \in \mathcal{T}_h}\right\} . 
\end{align*} 
In addition, we define the local $L^2(K)$-projection $\Psi_K^{k-1}: L^2(K) \rightarrow \mathbb{P}_{k-1}\left(K\right)$ for each $K \in \mathcal{T}_h$ defined as follows. For every $w \in L^2\left(K\right)$, $\Psi^{k}_K(w)$ is the unique element of $\mathbb{P}_{k-1}\left(K\right)$ satisfying
\begin{equation}
\label{eq:L2element_projection}
\int_{K} \Psi^{k-1}_K(w) v_h dx = \int_{K} {w v_h} dx \quad \forall \ v_h \in \mathbb{P}_{k-1}\left(K\right).
\end{equation}
 We will also use the following results.
%%%%%%%%%%%%%%%%%%%%%%%%%%%%%%%%%%%%%%%%%%%%%%%%%%
%% H(div) approximation
\begin{lemma}[Approximation results]
There exists $C > 0$, independent of $h_K$, such that for all $\boldsymbol{v} \in [H^m(K)]^2$ and $v \in H^m(K)$, $1 \leq m \leq k+1$, the following interpolation estimates hold
\begin{itemize}
	\item local Brezzi-Douglas-Marini approximation
	\begin{align}
	\label{l:BDM_approximation}
	      \left\|\boldsymbol{v} - \Pi^k \left(\boldsymbol{v}\right)\right\|_{K} &\leq C h_K^m \left| \boldsymbol{v}\right|_{H^m\left(K\right)}, \\ \nonumber
	      \left\|\boldsymbol{v} - \Pi^k \left(\boldsymbol{v}\right)\right\|_{H^1\left(K\right)} &\leq C h_K^{m-1} \left| \boldsymbol{v}\right|_{H^m\left(K\right)},
	\end{align}
	\item trace $L^2$-projection approximation
	\begin{align}
	\label{l:trace_approximation}
	      \left\|v - \Phi^{k} \left(v\right)\right\|_{\partial K} &\leq C h_K^{m-\frac{1}{2}} \left|v\right|_{H^m\left(K\right)},
	\end{align}
	\item local $L^2$-projection approximation
	\begin{align}
	\label{l:L2_approximation}
	      \left\|v - \Psi^{k}_K \left(v\right)\right\|_{K} &\leq C h_K^m \left| v\right|_{H^m\left(K\right)}, \\ \nonumber
	      \left|v - \Psi^{k}_K \left(v\right)\right|_{H^1\left(K\right)} &\leq C h_K^{m-1} \left|v\right|_{H^m\left(K\right)}.
	\end{align}
\end{itemize}
\end{lemma}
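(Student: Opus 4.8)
All three families of estimates are classical and reduce to a single scheme: transform to a fixed reference triangle $\hat{K}$ (by the contravariant Piola map for the $H(\mathrm{div})$-conforming BDM field, and by the affine pullback for the scalar quantities), use that each operator reproduces the relevant polynomial space, invoke the Bramble--Hilbert (Deny--Lions) lemma on $\hat{K}$, and scale back using the shape-regularity of $\{\mathcal{T}_h\}_{h > 0}$. I would treat the three items in the order stated. For~\eqref{l:BDM_approximation}, the plan is to recall from~\cite{MR3097958} that $\Pi^k$ commutes with the Piola transform, so that on $\hat{K}$ it becomes the reference BDM projection $\hat{\Pi}^k$; since $\hat{\Pi}^k$ is well defined and bounded on $[H^1(\hat{K})]^2$ and leaves $[\mathbb{P}_k(\hat{K})]^2$ invariant, the Bramble--Hilbert lemma yields $|\hat{\boldsymbol{v}} - \hat{\Pi}^k \hat{\boldsymbol{v}}|_{H^j(\hat{K})} \leq C \, |\hat{\boldsymbol{v}}|_{H^m(\hat{K})}$ for $j = 0, 1$, and undoing the Piola map (whose norm, and that of its inverse, are controlled by powers of $h_K$ under shape-regularity) gives both inequalities; this is in essence~\cite[Section~2.5]{MR3097958}, which I would simply cite.

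For the trace estimate~\eqref{l:trace_approximation} I would argue edge by edge. The boundary $\partial K$ consists of at most three edges $E$, and on each of them $\Phi^{k-1}_E$ is the $L^2(E)$-orthogonal projection onto $\mathbb{P}_{k-1}(E)$, so that $\|v - \Phi^{k-1}_E v\|_E \leq \|v - q\|_E$ for every polynomial $q$ on $K$ whose trace on $E$ lies in $\mathbb{P}_{k-1}(E)$. Taking $q$ to be a suitable polynomial approximant of $v$ on $K$ (an averaged Taylor polynomial, or the local $L^2(K)$-projection), I would apply the local trace inequality~\eqref{l:local_trace} to $v - q$ and then the Bramble--Hilbert bounds for $q$ to obtain $\|v - q\|_E \leq C \big( h_K^{-\frac{1}{2}} \|v - q\|_K + h_K^{\frac{1}{2}} |v - q|_{H^1(K)} \big) \leq C h_K^{m - \frac{1}{2}} |v|_{H^m(K)}$. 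Summing the squared contributions over the edges of $K$ then gives~\eqref{l:trace_approximation}.

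For~\eqref{l:L2_approximation}, the $L^2$ bound is the textbook estimate for the $L^2(K)$-orthogonal projection, namely $\|v - \Psi^{k-1}_K v\|_K \leq \|v - q\|_K$ for every $q \in \mathbb{P}_{k-1}(K)$, combined with the existence (via scaling and Bramble--Hilbert on $\hat{K}$) of a $q$ with $\|v - q\|_K \leq C h_K^m |v|_{H^m(K)}$. For the $H^1$ bound, which is not an orthogonality statement, I would insert that same $q$ and use the triangle inequality together with the inverse inequality~\eqref{l:local_inverse} applied to the polynomial $q - \Psi^{k-1}_K v$: this gives $|v - \Psi^{k-1}_K v|_{H^1(K)} \leq |v - q|_{H^1(K)} + C h_K^{-1} \|q - \Psi^{k-1}_K v\|_K$, while $\|q - \Psi^{k-1}_K v\|_K \leq \|v - q\|_K + \|v - \Psi^{k-1}_K v\|_K \leq 2 \|v - q\|_K$ by the $L^2$ bound just proved; one then concludes with the reference-element estimates $\|v - q\|_K \leq C h_K^m |v|_{H^m(K)}$ and $|v - q|_{H^1(K)} \leq C h_K^{m-1} |v|_{H^m(K)}$.

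The only genuinely non-routine point, and hence the one I expect to require care, is the $H^1$ half of~\eqref{l:L2_approximation}: because $\Psi^{k-1}_K$ is defined purely through $L^2$-orthogonality, an $H^1$ estimate cannot be read off from a best-approximation inequality and must be routed either through the inverse inequality above (which is precisely where the shape-regularity of $\{\mathcal{T}_h\}_{h > 0}$ enters) or through an $H^1$-stability bound for $\Psi^{k-1}_K$. Everything else is bookkeeping with affine and Piola scalings together with the Bramble--Hilbert lemma, for which I would lean on~\cite{MR3097958} and~\cite{MR2050138}.
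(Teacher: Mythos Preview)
Your proposal is correct and follows the standard route; the paper itself does not give an argument at all but simply cites \cite[Proposition~2.5.1]{MR3097958} for~\eqref{l:BDM_approximation}, \cite[Lemma~III.2.10]{MR851383} for~\eqref{l:trace_approximation}, and the proof of \cite[Theorem~1.103]{MR2050138} for~\eqref{l:L2_approximation}. What you have written is essentially an unpacking of those references (Piola transform plus Bramble--Hilbert for BDM, best-approximation plus the local trace inequality for the edge projection, and best-approximation plus an inverse inequality for the $H^1$ bound on the bulk $L^2$-projection), so there is no genuine difference in approach.
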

\begin{proof}
For \eqref{l:BDM_approximation} see \cite[Preposition 2.5.1]{MR3097958}, for \eqref{l:trace_approximation} see \cite[Lemma III.2.10]{MR851383}, and for \eqref{l:L2_approximation} see the proof of \cite[Theorem 1.103]{MR2050138}.
\end{proof}

%%%%%%%%%%%%%%%%%%%%%%%%%%%%%%%%%%%%%%%%%%%%%%%%%%%%%%%%%%%%%%%%%%%%%%%%%%%%%%%%%%%%%%%%%%%%%%%%%%%%%%%%%%%%
% Primal HDG with TVNF
%%%%%%%%%%%%%%%%%%%%%%%%%%%%%%%%%%%%%%%%%%%%%%%%%%%%%%%%%%%%%%%%%%%%%%%%%%%%%%%%%%%%%%%%%%%%%%%%%%%%%%%%%%%%

\section{Hybrid discontinuous Galerkin method}
\label{sec:hdg}

In this section we introduce the hdG method proposed in this work, study its well-posedness, and analyse its error.

%%%%%%%%%%%%%%%%%%%%%%%%%%%%%%%%%%%%%%%%%%%%%%%%%%%%%%%%%%%%%%%%%%%%%%%%%%%%%%%%%%%%%%%%%%%%%%%%%%%%%%%%%%%%
% Formulation of HDG for the Stokes with the TVNF boundary conditions
%%%%%%%%%%%%%%%%%%%%%%%%%%%%%%%%%%%%%%%%%%%%%%%%%%%%%%%%%%%%%%%%%%%%%%%%%%%%%%%%%%%%%%%%%%%%%%%%%%%%%%%%%%%%

\subsection{The discrete problem}
\label{sec:TVNF_formulation}
%%%%%%%%%%%%%%%%%%%%%%%%%%%%%%%%%%%%%%%%%%%%%%%%%%
%% Model problem

From now on we will use $\grad$ to denote the element-wise gradient. First, we multiply the first equation from~\eqref{eq:stokes} by a test function $\boldsymbol{v_h} \in \boldsymbol{BDM_h^k}$  and integrate by parts. This gives
\begin{align}
\label{eq:partial_integration}
 -\int_{\Omega} \nabla \cdot \left(\nu \grad \boldsymbol{u} \right) \boldsymbol{v_h} \dx + \int_{\Omega} \grad p \cdot \boldsymbol{v_h} \dx &= \sum_{K \in \mathcal{T}_h} \left(\int_K \nu \grad \boldsymbol{u} : \grad \boldsymbol{v_h} \dx - \int_K p \nabla \cdot \boldsymbol{v_h} \dx \right. \\ \nonumber
 & \left. - \int_{\partial K} \nu \boldsymbol{\partial_n u} \ \boldsymbol{v_h} \ds + \int_{\partial K} p \left(\boldsymbol{v_h}\right)_n \ds\right). 
\end{align}
%%%%%%%%%%%%%%%%%%%%%%%%%%%%%%%%%%%%%%%%%%%%%%%%%%
%% Orthogonality for viscous part
Since the normal and tangential vectors are perpendicular ($\boldsymbol{n} \cdot \boldsymbol{t} = 0$) we can split~\eqref{eq:partial_integration} as
\begin{align}
\label{eq:orthogonality}
 -\int_{\Omega} \nabla \cdot \left(\nu \grad \boldsymbol{u} \right) \boldsymbol{v_h} \dx + \int_{\Omega} \grad p \cdot \boldsymbol{v_h} \dx &= \sum_{K \in \mathcal{T}_h} \left(\int_K \nu \grad \boldsymbol{u} : \grad \boldsymbol{v_h} \dx - \int_K p \nabla \cdot \boldsymbol{v_h} \dx\right. \\ \nonumber
& \left.  - \int_{\partial K} \sigma_{nt}  \left(\boldsymbol{v_h}\right)_t \ds - \int_{\partial K} \sigma_{nn}  \left(\boldsymbol{v_h}\right)_n \ds \right).
\end{align}
%%%%%%%%%%%%%%%%%%%%%%%%%%%%%%%%%%%%%%%%%%%%%%%%%%
%% H(div) property
For the solution of the Stokes problem~\eqref{eq:stokes}, $\boldsymbol{\sigma_n}$ is continuous across all interior edges. Moreover, since $\boldsymbol{v_h} \in \boldsymbol{BDM_h^k}$, then $\left(\boldsymbol{v_h}\right)_n$ is continuous across all interior edges. Then we can rewrite~\eqref{eq:orthogonality} as follows
\begin{align}
\label{eq:sigma_continuity}
 -\int_{\Omega} \nabla \cdot \left(\nu \grad \boldsymbol{u} \right) \boldsymbol{v_h} \dx + \int_{\Omega} \grad p \cdot \boldsymbol{v_h} \dx &= \sum_{K \in \mathcal{T}_h} \left(\int_K \nu \grad \boldsymbol{u} : \grad \boldsymbol{v_h} \dx - \int_K p \nabla \cdot \boldsymbol{v_h} \dx  \right. \\ \nonumber
& \left.  - \int_{\partial K} \sigma_{nt}  \left(\boldsymbol{v_h}\right)_t \ds\right) - \int_{\Gamma} \sigma_{nn}  \left(\boldsymbol{v_h}\right)_n \ds.
\end{align}
%%%%%%%%%%%%%%%%%%%%%%%%%%%%%%%%%%%%%%%%%%%%%%%%%%
%% Lagrange multiplier
Moreover, since $\boldsymbol{\sigma_n}$ is continuous across all interior edges, then $\sum_{K \in \mathcal{T}_h} \int_{\partial K} \sigma_{nt} \tilde{v}_h \ds = 0$, for all $\tilde{v}_h \in  M_{h,0}^{k-1}$, and we can add this to~\eqref{eq:sigma_continuity} to get
\begin{align}
\label{eq:lagrange_multiplier}
 -\int_{\Omega} \nabla \cdot \left(\nu \grad \boldsymbol{u} \right) \boldsymbol{v_h} \dx + \int_{\Omega} \grad p \cdot \boldsymbol{v_h} \dx &= \sum_{K \in \mathcal{T}_h} \left(\int_K \nu \grad \boldsymbol{u} : \grad \boldsymbol{v_h} \dx - \int_K p \nabla \cdot \boldsymbol{v_h} \dx  \right. \\ \nonumber
& \left.  - \int_{\partial K} \sigma_{nt}  \left(\left(\boldsymbol{v_h}\right)_t - \tilde{v}_h\right) \ds\right) - \int_{\Gamma} \sigma_{nn}  \left(\boldsymbol{v_h}\right)_n \ds.
\end{align}
%%%%%%%%%%%%%%%%%%%%%%%%%%%%%%%%%%%%%%%%%%%%%%%%%%
%% Bilinear form for viscous part
Denoting $\tilde{u} = u_t$ on $\mathcal{E}_h$, then $\big(u_t - \tilde{u} \big) = \Phi^{k-1}\big(u_t - \tilde{u} \big) = 0$ on $\mathcal{E}_h$ and applying the boundary conditions~\eqref{eq:TVNF} we can rewrite~\eqref{eq:lagrange_multiplier} as
\begin{align}
\label{eq:viscous_part}
  -\int_{\Omega} \nabla \cdot \left(\nu \grad \boldsymbol{u} \right) \boldsymbol{v_h} \dx + \int_{\Omega} \grad p \cdot \boldsymbol{v_h} \dx &= \sum_{K \in \mathcal{T}_h} \left(\int_K \nu \grad \boldsymbol{u} : \grad \boldsymbol{v_h} \dx - \int_K p \nabla \cdot \boldsymbol{v_h} \dx  \right. \\ \nonumber
& \ \left.  - \int_{\partial K} \nu \left(\boldsymbol{\partial_n {u}}\right)_t  \left(\left(\boldsymbol{v_h}\right)_t - \tilde{v}_h\right) \ds \right. \\ \nonumber
& \ \pm \int_{\partial K} \nu \big(u_t - \tilde{u} \big) \left(\boldsymbol{\partial_n {v_h}}\right)_t \ds \\ \nonumber
 & \  \left. + \nu \frac{\tau}{h_K} \int_{\partial K} \Phi^{k-1}\big(u_t - \tilde{u} \big) \Phi^{k-1}\big(\left(\boldsymbol{v_h}\right)_t - \tilde{v}_h \big) \ds \right) \\ \nonumber
& \ - \int_{\Gamma} g  \left(\boldsymbol{v_h}\right)_n \ds,
\end{align}
where $\tau > 0$ is a stabilisation parameter.
 Hence, we define the velocity bilinear form $a: \boldsymbol{V_h} \times \boldsymbol{V_h} \rightarrow \mathbb{R}$ as
\begin{align}
\label{eq:TVNF_a_form}
  \nonumber a \left(\left(\boldsymbol{w_h}, {\tilde{w}_h}\right), \left(\boldsymbol{v_h}, {\tilde{v}_h}\right)\right) &:= \sum_{K \in \mathcal{T}_h} \left(\int_K \nu \grad \boldsymbol{w_h} : \grad \boldsymbol{v_h} \dx -
\int_{\partial K} \nu \left(\boldsymbol{\partial_n {w_h}}\right)_t \big(\left(\boldsymbol{v_h}\right)_t - \tilde{v}_h \big) \ds \right. \\
& \ + \varepsilon \int_{\partial K} \nu \big(\left(\boldsymbol{w_h}\right)_t - \tilde{w}_h \big) \left(\boldsymbol{\partial_n {v_h}}\right)_t \ds \\ \nonumber
 & \  \left. + \nu \frac{\tau}{h_K} \int_{\partial K} \Phi^{k-1}\big(\left(\boldsymbol{w_h}\right)_t - \tilde{w}_h \big) \Phi^{k-1}\big(\left(\boldsymbol{v_h}\right)_t - \tilde{v}_h \big) \ds \right) ,
\end{align}
where $\varepsilon \in \{-1,1\}$ and $\tau > 0$ is a stabilisation parameter 
%%%%%%%%%%%%%%%%%%%%%%%%%%%%%%%%%%%%%%%%%%%%%%%%%%
%% Bilinear form for pressure part
and $b: \boldsymbol{V_h} \times Q_h^{k-1} \rightarrow \mathbb{R}$ as
\begin{equation}
\label{eq:TVNF_b_form}
 b\left(\left(\boldsymbol{v_h}, {\tilde{v}_h}\right), q_h\right) := - \sum_{K \in \mathcal{T}_h} \int_K q_h \nabla \cdot \boldsymbol{v_h} \dx.
\end{equation}
%%%%%%%%%%%%%%%%%%%%%%%%%%%%%%%%%%%%%%%%%%%%%%%%%%
%% Discrete problem
With these definitions we propose the hdG method for  the TVNF boundary value problem~\eqref{eq:stokes_TVNF}:\\
\textit{Find $\left(\boldsymbol{u_h}, {\tilde{u}_h}, p_h\right) \in \boldsymbol{V_h} \times Q^{k-1}_h$ such that for all $\left(\boldsymbol{v_h}, {\tilde{v}_h}, q_h\right) \in \boldsymbol{V_h}\times Q^{k-1}_h$}
\begin{equation}
\label{eq:TVNF_variational_formulation}
 \left\{
		\begin{array}{rclcl}
   a \left(\left(\boldsymbol{u_h}, {\tilde{u}_h}\right),\left(\boldsymbol{v_h}, {\tilde{v}_h}\right)\right) & \ + & b\left(\left(\boldsymbol{v_h}, {\tilde{v}_h}\right), p_h\right) & = & \displaystyle\int_{\Omega}\boldsymbol{f}\boldsymbol{v_h} \dx + \int_{\Gamma} {g} {\left(\boldsymbol{v_h}\right)_n} \ds\\[2ex]
		 & & b\left(\left(\boldsymbol{u_h}, {\tilde{u}_h}\right), q_h\right) & = & 0 .
   \end{array}
 \right.
\end{equation}

\begin{remark}

The use of $H(div)$-conforming spaces not only decrease the number of degrees of freedom in comparison to~\cite{MR3486522}, but lead as well to a simpler bilinear form $b$. 
\end{remark}

\subsection{Well-posedness of the discrete problem}
\label{sec:TVNF_exis_uniq}

%%%%%%%%%%%%%%%%%%%%%%%%%%%%%%%%%%%%%%%%%%%%%%%%%%
%% HDG norms
Let us consider following semi-norm
\begin{align}
\label{eq:TVNF_norm}
|||\left(\boldsymbol{w_h}, {\tilde{w}_h}\right)|||^2 :=  \nu \sum_{K \in \mathcal{T}_h} \left(\left|\boldsymbol{w_h}\right|_{H^1(K)}^2 + h_K \left\| \boldsymbol{\partial_n w_h}\right\|_{\partial K}^2 +  \frac{\tau}{h_K} \left\|\Phi^{k-1}\big(\left(\boldsymbol{w_h}\right)_t - {\tilde{w}_h}\big)\right\|_{\partial K}^2\right). 
\end{align}
\begin{lemma}
\label{l:hdG_norm}
The semi-norm $||| \cdot |||$ defined by~\eqref{eq:TVNF_norm} is a norm on $\boldsymbol{V_h}$.
\end{lemma}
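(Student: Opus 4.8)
The plan is to verify the three defining properties of a norm, the only non-trivial one being definiteness: if $|||(\boldsymbol{w_h},\tilde{w}_h)|||=0$ then $(\boldsymbol{w_h},\tilde{w}_h)=(\boldsymbol{0},0)$ in $\boldsymbol{V_h}$. Positivity and homogeneity are immediate from the definition~\eqref{eq:TVNF_norm}, since each of the three summands is a square of an $L^2$-type quantity scaled by a positive factor ($\nu$, $\nu h_K$, $\nu\tau/h_K$ with $\tau>0$, $h_K>0$). So assume $|||(\boldsymbol{w_h},\tilde{w}_h)|||=0$. Then each summand vanishes: in particular $|\boldsymbol{w_h}|_{H^1(K)}=0$ for every $K\in\mathcal{T}_h$, which forces $\boldsymbol{w_h}$ to be (element-wise) constant on each triangle. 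Combined with $\boldsymbol{w_h}\in\boldsymbol{BDM_h^k}\subset H(div,\Omega)$, the normal component $(\boldsymbol{w_h})_n$ is continuous across interior edges; a piecewise-constant vector field whose normal trace is single-valued across every interior edge of a connected triangulation is globally constant, say $\boldsymbol{w_h}\equiv\boldsymbol{c}$.

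Next I would use the third (penalty) term, which gives $\Phi^{k-1}\big((\boldsymbol{w_h})_t-\tilde{w}_h\big)=0$ on $\partial K$ for every $K$. Since $\boldsymbol{w_h}\equiv\boldsymbol{c}$ is constant, on each edge $E$ the tangential trace $(\boldsymbol{w_h})_t=\boldsymbol{c}\cdot\boldsymbol{t}_E$ is a constant, hence lies in $\mathbb{P}_{k-1}(E)$ (as $k\geq1$), so $\Phi^{k-1}_E$ acts as the identity on it; likewise $\tilde{w}_h|_E\in\mathbb{P}_{k-1}(E)$ is unaffected by $\Phi^{k-1}_E$. Therefore the vanishing of the penalty term yields $(\boldsymbol{w_h})_t=\tilde{w}_h$ on every edge, i.e. $\boldsymbol{c}\cdot\boldsymbol{t}_E=\tilde{w}_h|_E$. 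Now invoke the boundary condition built into the space $M_{h,0}^{k-1}$: for any edge $E\subset\Gamma$ we have $\tilde{w}_h|_E=0$, hence $\boldsymbol{c}\cdot\boldsymbol{t}_E=0$. Picking two boundary edges with linearly independent tangential directions (possible since $\Omega$ is a polygon, so $\Gamma$ is not a single line segment) forces $\boldsymbol{c}=\boldsymbol{0}$, whence $\boldsymbol{w_h}=\boldsymbol{0}$ and consequently $\tilde{w}_h=(\boldsymbol{w_h})_t=0$ on every edge of $\mathcal{E}_h$, i.e. $\tilde{w}_h=0$ in $M_{h,0}^{k-1}$. This proves definiteness, and the triangle inequality follows from that of the underlying $L^2$-norms (or simply because $|||\cdot|||$ is the norm induced by the obvious semi-definite bilinear form, now shown to be definite), completing the proof.

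The main obstacle, and the point that deserves care, is the passage from "$\boldsymbol{w_h}$ piecewise constant and $H(div)$-conforming" to "$\boldsymbol{w_h}$ globally constant," together with the final argument that a nonzero constant vector cannot have zero tangential component along all of $\Gamma$; this is where the geometry of $\Omega$ (a genuine polygon, not degenerate) and the structure of the boundary space $M_{h,0}^{k-1}$ enter essentially. One should also be slightly careful that $\Phi^{k-1}$ reproduces the relevant traces, which is where $k\geq1$ is used (for $k=1$ the tangential trace of a $\mathbb{P}_1$ BDM field on an edge is affine, still in $\mathbb{P}_{k-1}=\mathbb{P}_0$? — no: it is in $\mathbb{P}_1$, and $\Phi^0$ is the edge average, so the argument as written needs $\boldsymbol{w_h}$ already known to be constant before applying $\Phi^{k-1}$; the order above — first $|\boldsymbol{w_h}|_{H^1(K)}=0$, then the penalty term — handles this correctly). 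For the NVTF case the same proof applies verbatim with $u_t$ replaced by $u_n$ and the $H(div)$-conformity argument correspondingly adjusted, as the authors note.
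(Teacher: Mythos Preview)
Your argument has a genuine gap at the step where you conclude that ``a piecewise-constant vector field whose normal trace is single-valued across every interior edge of a connected triangulation is globally constant.'' This is false: take two triangles sharing an edge $E$ with unit tangent $\boldsymbol{t}_E$, and set $\boldsymbol{w_h}=\boldsymbol{t}_E$ on one and $\boldsymbol{w_h}=2\boldsymbol{t}_E$ on the other; the normal traces on $E$ are both zero, so $H(div)$-conformity holds, yet the field is not globally constant. Normal continuity alone only pins down one component of the jump across each edge.

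The fix is to use the penalty term \emph{before} trying to pass from piecewise constant to globally constant, which is exactly what the paper does. Once you know $\boldsymbol{w_h}|_K$ is constant, its tangential trace on each edge lies in $\mathbb{P}_0(E)\subset\mathbb{P}_{k-1}(E)$, so (as you correctly observe later) $\Phi^{k-1}$ acts as the identity and the vanishing penalty gives $(\boldsymbol{w_h})_t=\tilde{w}_h$ on every edge. Since $\tilde{w}_h$ is single-valued, the tangential component of $\boldsymbol{w_h}$ is continuous across interior edges; combined with the normal continuity from $\boldsymbol{BDM_h^k}\subset H(div,\Omega)$, the full vector $\boldsymbol{w_h}$ is continuous, and a continuous piecewise-constant function on a connected domain is constant. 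From there your boundary argument (two boundary edges with independent tangents force $\boldsymbol{c}=\boldsymbol{0}$) is fine and in fact spells out what the paper leaves implicit. So the ingredients are all present in your write-up; only the order is wrong, and the incorrect intermediate claim should be removed.
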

%%%%%%%%%%%%%%%%%%%%%%%%%%%%%%%%%%%%%%%%%%%%%%%%%%
%% norm proof
\begin{proof}
Since $||| \cdot |||$ is a semi-norm, we only need to show that
\begin{equation*}
|||\left(\boldsymbol{w_h}, {\tilde{w}_h}\right)||| = 0 \Rightarrow \boldsymbol{w_h} = \boldsymbol{0} \mbox{ and } \tilde{w}_h = 0.
\end{equation*}
Let us suppose $\left(\boldsymbol{w_h}, \tilde{w}_h\right) \in \boldsymbol{V_h}$ and $|||\left(\boldsymbol{w_h}, {\tilde{w}_h}\right)||| = 0$. Then $\grad \boldsymbol{w_h} = 0$ in all $K \in \mathcal{T}_h$, and thus $\boldsymbol{w_h} |_K = \boldsymbol{C_K}$ for all $K \in \mathcal{T}_h$. Now, since $\boldsymbol{w_h} \in [\mathbb{P}_0(K)]^2$ in every $K$
\begin{equation*}
\left\|\Phi^{k-1}\big(\left(\boldsymbol{w_h}\right)_t - {\tilde{w}_h}\big)\right\|_{\partial K} = 0 
\Rightarrow \left(\boldsymbol{w_h}\right)_t = {\tilde{w}_h} \mbox{ in each } E \in \mathcal{E}_h.
\end{equation*}
Since $\tilde{w}_h$ is single valued on all the edges in $\mathcal{E}_h$, then $\left(\boldsymbol{w_h}\right)_t$ is continuous in $\Omega$. Moreover, since $\boldsymbol{w_h}$ belongs to $\boldsymbol{BDM_h^k}$, $\left(\boldsymbol{w_h}\right)_n$ is also continuous in $\Omega$. Then, $\boldsymbol{w_h}$ is continuous in $\Omega$, and thus $\boldsymbol{w_h} = \boldsymbol{C} \in \mathbb{R}^2$ in $\Omega$. Finally, since
\begin{equation*}
	\left(\boldsymbol{w_h}\right)_t = \left(\boldsymbol{C}\right)_t = 0 \mbox{ on } \Gamma \Rightarrow \boldsymbol{w_h} = \boldsymbol{0} \mbox{ in } \Omega,
\end{equation*}
which finishes the proof since ${\tilde{w}_h} = \left(\boldsymbol{w_h}\right)_t$ on every edge.
\end{proof}

%%%%%%%%%%%%%%%%%%%%%%%%%%%%%%%%%%%%%%%%%%%%%%%%%%
%% Boundennes of bilinear forms
%We now show that both bilinear forms are continuous. 
\begin{lemma}%[Continuity of bilinear forms]
\label{l:TVNF_continuity}
There exists $C > 0$ such that, for all $\left(\boldsymbol{w}, \tilde{w}\right)$, $\left(\boldsymbol{v}, \tilde{v}\right) \in \left[H^1\left(\Omega\right) \cap H^2\left(\mathcal{T}_h\right)\right]^2 \times L^2\left(\mathcal{E}_h\right)$ and $q \in L^2\left(\Omega\right)$, we have
\begin{align}
\label{eq:continuity_a}
|a\left(\left(\boldsymbol{w}, \tilde{w}\right), \left(\boldsymbol{v}, \tilde{v}\right)\right)| & \leq C |||\left(\boldsymbol{w}, \tilde{w}\right)||| \ |||\left(\boldsymbol{v}, \tilde{v}\right)|||, \\
\label{eq:continuity_b}
|b\left(\left(\boldsymbol{w}, \tilde{w}\right),q\right)| & \leq \sqrt{\frac{2}{\nu}} |||\left(\boldsymbol{w}, \tilde{w}\right)||| \left\|q\right\|_{\Omega}.
\end{align}
\end{lemma}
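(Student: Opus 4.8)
The plan is to split each bilinear form into its individual terms, bound every term by a Cauchy--Schwarz inequality on a single element or a single edge, and then recombine with a discrete Cauchy--Schwarz inequality over $K \in \mathcal{T}_h$, keeping track of the powers of $\nu$, $h_K$ and $\tau$ so that each piece is matched against one of the three groups of terms in the definition~\eqref{eq:TVNF_norm} of $|||\cdot|||$. For~\eqref{eq:continuity_b} this is almost immediate: from~\eqref{eq:TVNF_b_form} and the pointwise bound $(\nabla\cdot\boldsymbol{v})^2 \le 2\,|\grad\boldsymbol{v}|^2$ one gets $\|\nabla\cdot\boldsymbol{v}\|_K \le \sqrt{2}\,|\boldsymbol{v}|_{H^1(K)}$, and Cauchy--Schwarz on each $K$ followed by a discrete Cauchy--Schwarz over the mesh gives
\begin{equation*}
|b((\boldsymbol{w},\tilde w),q)| \le \sqrt{2}\,\Big(\sum_{K \in \mathcal{T}_h} |\boldsymbol{w}|_{H^1(K)}^2\Big)^{1/2}\|q\|_{\Omega} \le \sqrt{\tfrac{2}{\nu}}\,|||(\boldsymbol{w},\tilde w)|||\,\|q\|_{\Omega},
\end{equation*}
because $\nu\sum_K |\boldsymbol{w}|_{H^1(K)}^2 \le |||(\boldsymbol{w},\tilde w)|||^2$ by~\eqref{eq:TVNF_norm}; this reproduces precisely the announced constant.

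For~\eqref{eq:continuity_a} I would estimate the four groups of terms in~\eqref{eq:TVNF_a_form} one at a time. The volume term $\sum_K \int_K \nu\,\grad\boldsymbol{w}:\grad\boldsymbol{v}\dx$ is bounded by $\nu\,(\sum_K |\boldsymbol{w}|_{H^1(K)}^2)^{1/2}(\sum_K |\boldsymbol{v}|_{H^1(K)}^2)^{1/2} \le |||(\boldsymbol{w},\tilde w)|||\,|||(\boldsymbol{v},\tilde v)|||$ directly from~\eqref{eq:TVNF_norm}. The penalty term equals, after a Cauchy--Schwarz in the $L^2(\partial K)$ inner product and a discrete Cauchy--Schwarz over $K$, the product of the third contributions to $|||(\boldsymbol{w},\tilde w)|||$ and $|||(\boldsymbol{v},\tilde v)|||$, hence it too is bounded by $|||(\boldsymbol{w},\tilde w)|||\,|||(\boldsymbol{v},\tilde v)|||$.

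The two ``consistency-type'' boundary terms are where the real work lies. For $-\sum_K \int_{\partial K} \nu\,(\boldsymbol{\partial_n w})_t\,\big((\boldsymbol{v})_t - \tilde v\big)\ds$ I would apply Cauchy--Schwarz on $\partial K$ and distribute the weight $h_K$ so that
\begin{equation*}
\Big|\int_{\partial K} \nu\,(\boldsymbol{\partial_n w})_t\,\big((\boldsymbol{v})_t - \tilde v\big)\ds\Big| \le \big(\nu^{1/2} h_K^{1/2}\|\boldsymbol{\partial_n w}\|_{\partial K}\big)\,\big(\nu^{1/2} h_K^{-1/2}\|(\boldsymbol{v})_t - \tilde v\|_{\partial K}\big),
\end{equation*}
the first factor being one of the terms making up $|||(\boldsymbol{w},\tilde w)|||$. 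For the second factor one writes $(\boldsymbol{v})_t - \tilde v = \Phi^{k-1}\big((\boldsymbol{v})_t - \tilde v\big) + (I-\Phi^{k-1})\big((\boldsymbol{v})_t - \tilde v\big)$: the projected part is controlled (up to $\tau^{-1/2}$) by the third term of $|||(\boldsymbol{v},\tilde v)|||$, while the complementary part --- which, for the discrete pairs $(\boldsymbol{v},\tilde v)\in\boldsymbol{V_h}$ entering the error analysis, involves only the non-constant part of $\boldsymbol{v}$ on the edge --- is absorbed into $|\boldsymbol{v}|_{H^1(K)}$ by the local trace inequality~\eqref{l:local_trace} together with a Poincaré estimate. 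The symmetric term $\varepsilon\sum_K \int_{\partial K} \nu\,\big((\boldsymbol{w})_t - \tilde w\big)\,(\boldsymbol{\partial_n v})_t\ds$ is treated the same way with $\boldsymbol{w}$ and $\boldsymbol{v}$ interchanged, and here one may even first replace $(\boldsymbol{w})_t - \tilde w$ by $\Phi^{k-1}\big((\boldsymbol{w})_t - \tilde w\big)$ at no cost, because $(\boldsymbol{\partial_n v})_t \in \mathbb{P}_{k-1}(E)$ on each straight edge $E$ (since $\boldsymbol{v}|_K \in [\mathbb{P}_k(K)]^2$ and $\boldsymbol{n}$ is constant on $E$), so that~\eqref{eq:L2edge_projection} applies. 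Adding the four estimates and performing a final discrete Cauchy--Schwarz over $K \in \mathcal{T}_h$ yields~\eqref{eq:continuity_a}, with $C$ depending only on $\tau$ and on the constants of the trace and inverse inequalities. I expect the main obstacle to be exactly this jump factor $\|(\boldsymbol{v})_t - \tilde v\|_{\partial K}$ in the consistency terms: it is not itself a term of $|||\cdot|||$ (which only retains its $\Phi^{k-1}$-projection), and recovering the complementary piece from $|\boldsymbol{v}|_{H^1(K)}$ is what makes this estimate more delicate than in the fully discontinuous setting of~\cite{MR3486522}.
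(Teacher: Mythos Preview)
Your approach is essentially the paper's: Cauchy--Schwarz term by term, then control of the jump $\|v_t-\tilde v\|_{\partial K}$ by splitting off its $\Phi^{k-1}$-projection. Two small points are worth aligning with the paper. First, for the complementary piece $(I-\Phi^{k-1})(v_t-\tilde v)$ the paper does not go through a Poincar\'e argument nor restrict to discrete pairs: it simply writes $\|v_t-\tilde v\|_{\partial K}\le \|v_t-\Phi^{k-1}(v_t)\|_{\partial K}+\|\Phi^{k-1}(v_t-\tilde v)\|_{\partial K}$ and invokes the ready-made trace $L^2$-projection estimate~\eqref{l:trace_approximation} with $m=1$ to get $\|v_t-\Phi^{k-1}(v_t)\|_{\partial K}\le C h_K^{1/2}|\boldsymbol v|_{H^1(K)}$; this works for any $\boldsymbol v\in H^1(K)$, so your parenthetical restriction to $(\boldsymbol v,\tilde v)\in\boldsymbol V_h$ is unnecessary at that step. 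Second, your shortcut for the symmetric term (replacing $(\boldsymbol w)_t-\tilde w$ by its projection because $(\boldsymbol{\partial_n v})_t\in\mathbb P_{k-1}(E)$) is only valid for polynomial $\boldsymbol v$, not for the general $\boldsymbol v\in H^2(\mathcal T_h)$ in the statement; the paper avoids this by treating the two consistency terms symmetrically, bounding each via~\eqref{l:trace_approximation} as above. Your treatment of~\eqref{eq:continuity_b} is exactly right and more explicit than the paper's ``analogous''.
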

%%%%%%%%%%%%%%%%%%%%%%%%%%%%%%%%%%%%%%%%%%%%%%%%%%
%% Boundedness proof
\begin{proof}
%%%%%%%%%%%%%%%%%%%%%%%%%%%%%%%%%%%%%%%%%%%%%%%%%%
%% Cauchy-Schwarz inequality
Let us start with~\eqref{eq:continuity_a}. Using the Cauchy-Schwarz inequality we get
\begin{align*}
|a\left(\left(\boldsymbol{w}, \tilde{w}\right), \left(\boldsymbol{v}, \tilde{v}\right)\right)| & \leq 2 |||\left(\boldsymbol{w}, \tilde{w}\right)||| \ |||\left(\boldsymbol{v}, \tilde{v}\right)||| \\
& \ + \sum_{K \in \mathcal{T}_h} \left(\nu\left\| \boldsymbol{\partial_n w}\right\|_{\partial K} \left\|{v}_t - \tilde{v}\right\|_{\partial K} + \nu\left\| \boldsymbol{\partial_n v}\right\|_{\partial K} \left\|{w}_t - \tilde{w}\right\|_{\partial K}\right).
\end{align*}
%%%%%%%%%%%%%%%%%%%%%%%%%%%%%%%%%%%%%%%%%%%%%%%%%%
%% Triangle inequality
Therefore, using the triangle inequality and the trace $L^2$-projection approximation~\eqref{l:trace_approximation} we get
\begin{align}
 \nonumber
\left\| \boldsymbol{\partial_n w}\right\|_{\partial K} \left\|{v}_t - \tilde{v}\right\|_{\partial K} & \leq \left\| \boldsymbol{\partial_n w}\right\|_{\partial K} \left\|{v}_t - \Phi^{k-1}\left({v}_t\right)\right\|_{\partial K}+ \left\| \boldsymbol{\partial_n w}\right\|_{\partial K} \left\|\Phi^{k-1}\left({v}_t - \tilde{v}\right)\right\|_{\partial K}\\
\label{eq:continuity_triangle_inequality}
& \leq \sqrt{h_K} \left\| \boldsymbol{\partial_n w}\right\|_{\partial K} \left(\tilde{c}_1 \left|\boldsymbol{v} \right|_{H^1(K)} + \frac{1}{\sqrt{h_K}}\left\|\Phi^{k-1}\left({v}_t - \tilde{v}\right)\right\|_{\partial K}\right).
\end{align}
Thus, using the Cauchy-Schwarz inequality
\begin{align*}
\nu \left\| \boldsymbol{\partial_n w}\right\|_{\partial K} \left\|v_t - \tilde{v}\right\|_{\partial K} & \leq c_1 |||\left(\boldsymbol{w}, \tilde{w}\right)||| \ |||\left(\boldsymbol{v}, \tilde{v}\right)|||, \\
\nu \left\|\boldsymbol{\partial_n {v}}\right\|_{\partial K} \left\|w_t - \tilde{w}\right\|_{\partial K} & \leq c_2 |||\left(\boldsymbol{v}, \tilde{v}\right)||| \ |||\left(\boldsymbol{w}, \tilde{w}\right)|||.
\end{align*}
Finally, we get~\eqref{eq:continuity_a} for $C = \left(2 + c_1 +
  c_2\right)$. The continuity~\eqref{eq:continuity_b} is analogous. 
\end{proof}
%%%%%%%%%%%%%%%%%%%%%%%%%%%%%%%%%%%%%%%%%%%%%%%%%%
%% Coercivity for a\left(,\right)
 To show the well-posedness of~\eqref{eq:TVNF_variational_formulation}
we need the ellipticity of the bilinear form $a$ and an inf-sup
condition for the bilinear form $b$. We start by showing that $a$ is elliptic with respect to $||| \cdot |||$.
\begin{lemma}%[Coercivity of the bilinear form $a$]
\label{l:TVNF_coercivity}
There exists $\alpha > 0$ such that for all $\left(\boldsymbol{v_h}, {\tilde{v}_h}\right) \in \boldsymbol{V_h}$
\begin{equation}
\label{eq:coercivity_a}
a\left(\left(\boldsymbol{v_h}, {\tilde{v}_h}\right), \left(\boldsymbol{v_h}, {\tilde{v}_h}\right)\right) \geq \alpha |||\left(\boldsymbol{v_h}, {\tilde{v}_h}\right)|||^2.
\end{equation}
If $\varepsilon = -1$ in the definition~\eqref{eq:TVNF_a_form}, then this only holds under the additional hypothesis of $\tau$ being large enough. If $\varepsilon = 1$ in~\eqref{eq:TVNF_a_form}, this inequality holds for arbitrary $\tau$.
\end{lemma}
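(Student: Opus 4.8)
The natural first move is to evaluate $a$ on the diagonal, i.e. take $\boldsymbol{w_h}=\boldsymbol{v_h}$ and $\tilde{w}_h=\tilde{v}_h$ in \eqref{eq:TVNF_a_form}. The first and the last summands then reproduce exactly the first and third contributions to $|||(\boldsymbol{v_h},\tilde{v}_h)|||^2$ in \eqref{eq:TVNF_norm}, namely $\nu\sum_K|\boldsymbol{v_h}|_{H^1(K)}^2$ and $\nu\sum_K\frac{\tau}{h_K}\|\Phi^{k-1}((\boldsymbol{v_h})_t-\tilde{v}_h)\|_{\partial K}^2$, whereas the two middle summands collapse into $(\varepsilon-1)\,\nu\sum_K\int_{\partial K}(\boldsymbol{\partial_n v_h})_t\,\big((\boldsymbol{v_h})_t-\tilde{v}_h\big)\ds$. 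So the argument splits according to the sign of $\varepsilon-1$, and in both cases the remaining work is (i) to absorb this cross term when it does not vanish, and (ii) to recover the term $\nu\sum_K h_K\|\boldsymbol{\partial_n v_h}\|_{\partial K}^2$ of the norm, which never shows up with a good sign on the left.

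For step (ii), needed for either value of $\varepsilon$, I would apply the discrete trace--inverse inequality \eqref{l:discrete_trace_inverse} entrywise to the polynomial matrix $\grad\boldsymbol{v_h}$ on each $K$ and use $|\boldsymbol{\partial_n v_h}|\le|\grad\boldsymbol{v_h}|$ pointwise (since $|\boldsymbol{n}|=1$), getting $h_K\|\boldsymbol{\partial_n v_h}\|_{\partial K}^2\le C_{max}^2\,|\boldsymbol{v_h}|_{H^1(K)}^2$; hence $|||(\boldsymbol{v_h},\tilde{v}_h)|||^2\le \nu\sum_K\big((1+C_{max}^2)|\boldsymbol{v_h}|_{H^1(K)}^2+\frac{\tau}{h_K}\|\Phi^{k-1}((\boldsymbol{v_h})_t-\tilde{v}_h)\|_{\partial K}^2\big)$. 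When $\varepsilon=1$ the cross term vanishes identically, $a((\boldsymbol{v_h},\tilde{v}_h),(\boldsymbol{v_h},\tilde{v}_h))$ equals precisely the right-hand side above without the factor $1+C_{max}^2$, and \eqref{eq:coercivity_a} follows with $\alpha=(1+C_{max}^2)^{-1}$, for arbitrary $\tau>0$.

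When $\varepsilon=-1$ the cross term is $-2\nu\sum_K\int_{\partial K}(\boldsymbol{\partial_n v_h})_t\big((\boldsymbol{v_h})_t-\tilde{v}_h\big)\ds$ and must be controlled. The key point is that on each edge $E$ one has $(\boldsymbol{\partial_n v_h})_t\in\mathbb{P}_{k-1}(E)$, so by the definition \eqref{eq:L2edge_projection} of the $L^2(E)$-projection we may replace $\big((\boldsymbol{v_h})_t-\tilde{v}_h\big)$ by $\Phi^{k-1}\big((\boldsymbol{v_h})_t-\tilde{v}_h\big)$ in that integral without changing its value. Then Cauchy--Schwarz together with Young's inequality with a weight $\delta>0$ bounds the cross term by $\nu\sum_K\big(\delta\,h_K\|\boldsymbol{\partial_n v_h}\|_{\partial K}^2+\frac{1}{\delta h_K}\|\Phi^{k-1}((\boldsymbol{v_h})_t-\tilde{v}_h)\|_{\partial K}^2\big)$, and the first term is $\le\delta C_{max}^2|\boldsymbol{v_h}|_{H^1(K)}^2$ by \eqref{l:discrete_trace_inverse} again. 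This yields $a((\boldsymbol{v_h},\tilde{v}_h),(\boldsymbol{v_h},\tilde{v}_h))\ge\nu\sum_K\big((1-\delta C_{max}^2)|\boldsymbol{v_h}|_{H^1(K)}^2+(\tau-\tfrac{1}{\delta})\tfrac{1}{h_K}\|\Phi^{k-1}((\boldsymbol{v_h})_t-\tilde{v}_h)\|_{\partial K}^2\big)$; choosing e.g. $\delta=2/\tau$ makes both coefficients strictly positive as soon as $\tau>2C_{max}^2$, and one concludes exactly as before (invoking step (ii) once more to reinsert the $h_K\|\boldsymbol{\partial_n v_h}\|_{\partial K}^2$ term of the norm) with an $\alpha$ that now depends on $\tau$.

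The main obstacle is precisely this $\varepsilon=-1$ cross term: one must notice that it pairs a degree-$(k-1)$ edge polynomial against $(\boldsymbol{v_h})_t-\tilde{v}_h$, so that the $L^2(E)$-projection slips in at no cost and the term becomes controllable by the stabilisation contribution (something the norm does see) rather than by a quantity it does not; and then the Young parameter must be tuned so that neither the $H^1$-seminorm nor the penalty term loses positivity, which is exactly where the lower bound on $\tau$ is forced. The two trace--inverse estimates and the final comparison of the two quadratic forms are routine.
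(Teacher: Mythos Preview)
Your proof is correct and follows essentially the same approach as the paper's: evaluate $a$ on the diagonal, use that $(\boldsymbol{\partial_n v_h})_t\in\mathbb{P}_{k-1}(E)$ to insert the projection $\Phi^{k-1}$ into the cross term, control the missing $h_K\|\boldsymbol{\partial_n v_h}\|_{\partial K}^2$ term of the norm via the discrete trace inequality \eqref{l:discrete_trace_inverse}, and for $\varepsilon=-1$ absorb the cross term by Cauchy--Schwarz and Young under the assumption $\tau>2C_{max}^2$. The only cosmetic difference is that the paper fixes the Young weight implicitly (obtaining coefficients $\tfrac12$ and $\tau-2C_{max}^2$ directly) whereas you carry a free parameter $\delta$ and choose $\delta=2/\tau$ at the end; both yield the same threshold and the same final constant structure.
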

%%%%%%%%%%%%%%%%%%%%%%%%%%%%%%%%%%%%%%%%%%%%%%%%%%
%% Coercivity proof
\begin{proof}
First, since $\boldsymbol{\partial_n v_h}|_E \in [\mathbb{P}_{k-1}(E)]^2$ for all $E \in \mathcal{E}_h$, then
\begin{align}
\label{eq:coervity_projection}
a\left(\left(\boldsymbol{v_h}, {\tilde{v}_h}\right), \left(\boldsymbol{v_h}, {\tilde{v}_h}\right)\right) & = \sum_{K \in \mathcal{T}_h} \left( \nu \left|\boldsymbol{v_h} \right|_{H^1(K)}^2 -
 \nu \left(1- \varepsilon\right) \int_{\partial K} \left(\boldsymbol{\partial_n {v_h}}\right)_t \Phi^{k-1}\big(\left(\boldsymbol{v_h}\right)_t - \tilde{v}_h \big) \ds \right. \\ \nonumber
& \ \left. + \nu \frac{\tau}{h_K} \left\|\Phi^{k-1}\left(\left(\boldsymbol{v_h}\right)_t - {\tilde{v}_h}\right)\right\|_{\partial K}^2\right) .
\end{align}
%%%%%%%%%%%%%%%%%%%%%%%%%%%%%%%%%%%%%%%%%%%%%%%%%%
%% Symmetric and non-symmetric case
To bound the middle term in terms of the other two, we consider two cases.

%%%%%%%%%%%%%%%%%%%%%%%%%%%%%%%%%%%%%%%%%%%%%%%%%%
%% Non-symmetric case
  $\bullet$ if $\varepsilon = 1$, then~\eqref{eq:coervity_projection} reduces to
\begin{equation}
\label{eq:coercivity_a_non_sym}
a\left(\left(\boldsymbol{v_h}, {\tilde{v}_h}\right), \left(\boldsymbol{v_h}, {\tilde{v}_h}\right)\right) = \sum_{K \in \mathcal{T}_h}\left( \nu \left|\boldsymbol{v_h} \right|_{H^1(K)}^2 + \nu \frac{\tau}{h_K} \left\|\Phi^{k-1}\left(\left(\boldsymbol{v_h}\right)_t - {\tilde{v}_h}\right)\right\|_{\partial K}^2\right).
\end{equation}
%%%%%%%%%%%%%%%%%%%%%%%%%%%%%%%%%%%%%%%%%%%%%%%%%%
%% Relation between hdG norms
It only remains to show that the right hand side of~\eqref{eq:coercivity_a_non_sym} is an upper bound (up to a constant) for the norm $||| \cdot |||$ given by~\eqref{eq:TVNF_norm}. Using the discrete trace inequality~\eqref{l:discrete_trace_inverse} we get
\begin{align*}
\sum_{K \in \mathcal{T}_h} h_K \left\| \boldsymbol{\partial_n v_h}\right\|_{\partial K}^2 &\leq \sum_{K \in \mathcal{T}_h} C_{max}^2 \left|\boldsymbol{v_h}\right|_{H^1(K)}^2,
\end{align*}
and then
\begin{equation}
\label{eq:TVNF_norm_relation}
|||\left(\boldsymbol{v_h}, {\tilde{v}_h}\right)|||^2 \leq \left(1+C_{max}^2\right) \sum_{K \in \mathcal{T}_h} \nu \left(\left|\boldsymbol{v_h} \right|_{H^1(K)}^2 + \frac{\tau}{h_K} \left\|\Phi^{k-1}\left(\left(\boldsymbol{v_h}\right)_t - {\tilde{v}_h}\right)\right\|_{\partial K}^2\right),
\end{equation}
which proves~\eqref{eq:coercivity_a} with $\alpha = \frac{1}{1+C_{max}^2}$. \\
%%%%%%%%%%%%%%%%%%%%%%%%%%%%%%%%%%%%%%%%%%%%%%%%%%
%% Symmetric case
  $\bullet$ if $\varepsilon = -1$, then~\eqref{eq:coervity_projection} becomes
\begin{align*}
a\left(\left(\boldsymbol{v_h}, {\tilde{v}_h}\right), \left(\boldsymbol{v_h}, {\tilde{v}_h}\right)\right) & = \sum_{K \in \mathcal{T}_h} \left( \nu \left|\boldsymbol{v_h} \right|_{H^1(K)}^2 -
 2\nu \int_{\partial K} \left(\boldsymbol{\partial_n {v_h}}\right)_t \Phi^{k-1}\big(\left(\boldsymbol{v_h}\right)_t - \tilde{v}_h \big) \ds \right. \\ 
& \ \left. + \nu \frac{\tau}{h_K} \left\|\Phi^{k-1}\left(\left(\boldsymbol{v_h}\right)_t - {\tilde{v}_h}\right)\right\|_{\partial K}^2\right) .
\end{align*}
%%%%%%%%%%%%%%%%%%%%%%%%%%%%%%%%%%%%%%%%%%%%%%%%%%
%% Cauchy-Schwarz inequality
Using the Cauchy-Schwarz inequality
\begin{align*}
a\left(\left(\boldsymbol{v_h}, {\tilde{v}_h}\right), \left(\boldsymbol{v_h}, {\tilde{v}_h}\right)\right) & \geq \sum_{K \in \mathcal{T}_h} \left(\nu\left|\boldsymbol{v_h} \right|_{H^1(K)}^2 -
 2\nu \left\| \boldsymbol{\partial_n v_h}\right\|_{\partial K} \left\|\Phi^{k-1}\left(\left(\boldsymbol{v_h}\right)_t - {\tilde{v}_h}\right)\right\|_{\partial K} \right.\\
& \ \left. + \nu\frac{\tau}{h_K} \left\|\Phi^{k-1}\left(\left(\boldsymbol{v_h}\right)_t - {\tilde{v}_h}\right)\right\|_{\partial K}^2\right).
\end{align*}
%%%%%%%%%%%%%%%%%%%%%%%%%%%%%%%%%%%%%%%%%%%%%%%%%%
%% inverse inequality
Since $\boldsymbol{v_h} \in \boldsymbol{BDM_h^k}$ is a piecewise polynomial we can apply the discrete trace inequality~\eqref{l:discrete_trace_inverse} to the second term, followed by the Young's inequality to arrive at 
\begin{align*}
a\left(\left(\boldsymbol{v_h}, {\tilde{v}_h}\right), \left(\boldsymbol{v_h}, {\tilde{v}_h}\right)\right) &\geq \sum_{K \in \mathcal{T}_h} \left(\nu \left|\boldsymbol{v_h} \right|_{H^1(K)}^2 -
 2\nu \frac{C_{max}}{\sqrt{h_K}}\left|\boldsymbol{v_h}\right|_{H^1(K)} \left\|\Phi^{k-1}\left(\left(\boldsymbol{v_h}\right)_t - {\tilde{v}_h}\right)\right\|_{\partial K} \right. \\
& \ \left.+ \nu\frac{\tau}{h_K} \left\|\Phi^{k-1}\left(\left(\boldsymbol{v_h}\right)_t - {\tilde{v}_h}\right)\right\|_{\partial K}^2\right) \\
& \geq 
\sum_{K \in \mathcal{T}_h} \left(\frac{\nu}{2} \left|\boldsymbol{v_h}\right|_{H^1(K)}^2 + \nu\frac{\tau - 2 C_{max}^2}{h_K} \left\|\Phi^{k-1}\left(\left(\boldsymbol{v_h}\right)_t - {\tilde{v}_h}\right)\right\|_{\partial K}^2 \right) \\
& \geq \nu C \sum_{K \in \mathcal{T}_h}\left(  \left|\boldsymbol{v_h} \right|_{H^1(K)}^2 + \frac{\tau}{h_K} \left\|\Phi^{k-1}\left(\left(\boldsymbol{v_h}\right)_t - {\tilde{v}_h}\right)\right\|_{\partial K}^2\right).
\end{align*}
%%%%%%%%%%%%%%%%%%%%%%%%%%%%%%%%%%%%%%%%%%%%%%%%%%
%% Choice of tau
Finally, if we suppose $\tau > 2 C_{max}^2$ then $C := \min\left\{\frac{1}{2},\frac{\tau - 2 C_{max}^2}{\tau}\right\} > 0$,
using~\eqref{eq:TVNF_norm_relation} we get~\eqref{eq:coercivity_a} for $\alpha =  \frac{C}{1+C_{max}^2}$.
\end{proof}
 The next step towards stability is proving the inf-sup condition for $b$, which is done next.
%%%%%%%%%%%%%%%%%%%%%%%%%%%%%%%%%%%%%%%%%%%%%%%%%%
%% Inf-sup condition
\begin{lemma}%[Inf-sup condition for the bilinear form $b$]
\label{l:TVNF_inf_sup}
There exists $\beta > 0$ independent of $h_K$ such that
\begin{equation*}
\sup_{\left(\boldsymbol{v_h}, {\tilde{v}_h}\right) \in \boldsymbol{V_h}} \frac{b\left(\left(\boldsymbol{v_h}, {\tilde{v}_h}\right), q_h\right)}{|||\left(\boldsymbol{v_h}, {\tilde{v}_h}\right)|||} \geq \frac{\beta}{\sqrt{\nu}} \left\|q_h\right\|_{\Omega} \ \forall q_h \in Q_h^{k-1}.
\end{equation*}
\end{lemma}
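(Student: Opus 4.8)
The plan is to establish the inf-sup condition for $b$ by reducing it to the well-known continuous inf-sup (Ne\v{c}as) inequality for the divergence operator on $\Omega$, and then using the commuting-diagram property of the BDM projection. Fix $q_h \in Q_h^{k-1}$. First I would invoke the surjectivity of the divergence from $[H^1_0(\Omega)]^2$ onto $L^2_0(\Omega)$: there exists $\boldsymbol{w} \in [H^1_0(\Omega)]^2$ with $\nabla \cdot \boldsymbol{w} = q_h$ (after the usual reduction to mean-zero pressures, or using that here $\Gamma$-conditions leave the full $L^2(\Omega)$ available — one must check whether a compatibility/mean-value condition on $q_h$ is needed for the TVNF problem) and $\|\boldsymbol{w}\|_{H^1(\Omega)} \leq C \|q_h\|_{\Omega}$ with $C$ depending only on $\Omega$.

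Next I would take $\boldsymbol{v_h} := \Pi^k(\boldsymbol{w})$, the BDM projection of $\boldsymbol{w}$, and choose the Lagrange multiplier component $\tilde{v}_h := \Phi^{k-1}((\boldsymbol{v_h})_t)$ so that $(\boldsymbol{v_h})_t - \tilde{v}_h$ is $L^2(E)$-orthogonal to $\mathbb{P}_{k-1}(E)$ on every edge, making the stabilisation term in $|||\cdot|||$ vanish. One must first confirm that $\tilde{v}_h \in M_{h,0}^{k-1}$, i.e. that it vanishes on $\Gamma$; this follows because $\boldsymbol{w} \in [H^1_0(\Omega)]^2$ gives $w_t = 0$ on $\Gamma$, the BDM projection preserves the tangential trace in an appropriate averaged sense, hence $(\boldsymbol{v_h})_t$ projects to zero on boundary edges. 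The key algebraic identity is the commuting property $\nabla \cdot \Pi^k(\boldsymbol{w}) = \Psi^{k-1}(\nabla \cdot \boldsymbol{w})$; since $\nabla \cdot \boldsymbol{w} = q_h \in Q_h^{k-1}$ is already a piecewise polynomial of degree $k-1$, this gives $\nabla \cdot \boldsymbol{v_h} = q_h$ exactly, so that
\begin{equation*}
b\left(\left(\boldsymbol{v_h}, \tilde{v}_h\right), q_h\right) = -\sum_{K \in \mathcal{T}_h} \int_K q_h \, \nabla \cdot \boldsymbol{v_h} \dx = -\|q_h\|_{\Omega}^2.
\end{equation*}

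It then remains to bound $|||(\boldsymbol{v_h}, \tilde{v}_h)|||$ by $C \nu^{1/2} \|q_h\|_{\Omega}$. With the stabilisation term gone by construction, I need only control $\nu \sum_K (|\boldsymbol{v_h}|_{H^1(K)}^2 + h_K\|\boldsymbol{\partial_n v_h}\|_{\partial K}^2)$. The discrete trace inequality~\eqref{l:discrete_trace_inverse} reduces this to $\nu \sum_K |\boldsymbol{v_h}|_{H^1(K)}^2$, and the $H^1$-stability of the BDM projection — coming from the approximation estimate~\eqref{l:BDM_approximation} with $m=1$ together with a triangle inequality against $\boldsymbol{w}$, or more directly from $\|\Pi^k \boldsymbol{w}\|_{H^1(\mathcal{T}_h)} \leq C\|\boldsymbol{w}\|_{H^1(\Omega)}$ — yields $|\boldsymbol{v_h}|_{H^1(\mathcal{T}_h)} \leq C\|\boldsymbol{w}\|_{H^1(\Omega)} \leq C\|q_h\|_{\Omega}$. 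Combining, $|||(\boldsymbol{v_h},\tilde{v}_h)||| \leq C\sqrt{\nu}\,\|q_h\|_{\Omega}$, so the quotient is at least $\|q_h\|_{\Omega}/(C\sqrt{\nu})$, giving $\beta = 1/C$. I expect the main obstacle to be the boundary-edge bookkeeping: verifying that $\tilde{v}_h$ genuinely lies in $M_{h,0}^{k-1}$ and that the reduction to mean-zero pressures (or the absence thereof, given the stress-type boundary condition) is handled correctly, since the BDM projection's tangential trace on $\Gamma$ interacts with the definition of the space $M_{h,0}^{k-1}$ in a way that the fully discontinuous setting of~\cite{MR3486522} avoids.
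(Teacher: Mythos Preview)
Your approach is essentially the paper's Fortin construction unpacked directly, but your choice of the Lagrange-multiplier component creates a real gap. You take $\tilde{v}_h := \Phi^{k-1}\big((\Pi^k\boldsymbol{w})_t\big)$ in order to kill the stabilisation term, and then assert that this lies in $M_{h,0}^{k-1}$ because ``the BDM projection preserves the tangential trace in an appropriate averaged sense''. That assertion is false: the BDM interpolant is determined by edge moments of the \emph{normal} component (plus interior moments for $k\geq 2$), and the tangential trace on a boundary edge is a nontrivial linear combination of normal-moment data from all edges of the adjacent element. So even when $\boldsymbol{w}\in[H^1_0(\Omega)]^2$, in general $\Phi^{k-1}\big((\Pi^k\boldsymbol{w})_t\big)\neq 0$ on $\Gamma$, and your test pair $(\boldsymbol{v_h},\tilde{v}_h)$ need not belong to $\boldsymbol{V_h}$. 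You correctly flagged this as the obstacle, but the resolution you propose does not hold.

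The paper sidesteps this by taking $\tilde{v}_h := \Phi^{k-1}(w_t)$, the edge projection of the tangential trace of the \emph{continuous} field. Since the underlying continuous velocity satisfies $w_t=0$ on $\Gamma$, this lands in $M_{h,0}^{k-1}$ for free; the price is that the stabilisation term $\tau h_K^{-1}\|\Phi^{k-1}((\boldsymbol{v_h})_t-\tilde{v}_h)\|_{\partial K}^2$ no longer vanishes and must be bounded, which the paper does via a triangle inequality splitting into $\|(\Pi^k\boldsymbol{w})_t - w_t\|_{\partial K}$ and $\|w_t - \Phi^{k-1}(w_t)\|_{\partial K}$, each controlled by \eqref{l:local_trace}, \eqref{l:BDM_approximation}, and \eqref{l:trace_approximation} with $m=1$. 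Your compatibility worry about mean-zero pressures is also relevant: in the TVNF setting $Q_h^{k-1}$ carries no mean constraint, so you cannot take $\boldsymbol{w}\in[H^1_0(\Omega)]^2$; you need the continuous inf-sup over the space with only $w_t=0$ on $\Gamma$, which again is exactly the space on which the paper's choice of $\tilde{v}_h$ is automatically admissible.
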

%%%%%%%%%%%%%%%%%%%%%%%%%%%%%%%%%%%%%%%%%%%%%%%%%%
%% Inf-sup proof
\begin{proof}
According to the Fortin criterion, see~\cite[Lemma~4.19]{MR2050138}, we need to prove that there exists a Fortin operator $\boldsymbol{\Pi}: \left[H^1\left(\Omega\right) \right]^2 \rightarrow \boldsymbol{V_h}$ such that for every $\boldsymbol{v} \in [H^1(\Omega)]^2$ the following conditions hold
\begin{align}
\label{eq:fortin_operator_stokes_first}
 b\left(\left(\boldsymbol{v}, \tilde{v}\right), q_h \right) & = b\left(\boldsymbol{\Pi}\left(\boldsymbol{v}\right), q_h \right) \quad \forall \ {q_h \in Q_h^{k-1}}, \\
\label{eq:fortin_operator_stokes_second}
 |||\boldsymbol{\Pi}\left(\boldsymbol{v}\right)||| & \leq C  \sqrt{\nu}\|\boldsymbol{v}\|_{H^1\left(\Omega\right)}.
\end{align}
%%%%%%%%%%%%%%%%%%%%%%%%%%%%%%%%%%%%%%%%%%%%%%%%%%
%% Definition of operator
Let $\boldsymbol{v} \in [H^1(\Omega)]^2$ and let us consider the operator $\boldsymbol{\Pi}\left(\boldsymbol{v}\right) := \left(\Pi^k \left(\boldsymbol{v}\right),\Phi^{k-1} \left({v}_t\right)\right)$.
%%%%%%%%%%%%%%%%%%%%%%%%%%%%%%%%%%%%%%%%%%%%%%%%%%
%% first property
It is well known, see~\cite[Section~2.5]{MR3097958}, that $\Pi^k$ satisfies
~\eqref{eq:fortin_operator_stokes_first}.
%%%%%%%%%%%%%%%%%%%%%%%%%%%%%%%%%%%%%%%%%%%%%%%%%%
%% inverse inequality
To prove \eqref{eq:fortin_operator_stokes_second} we denote $\left(\boldsymbol{w_h}, {\tilde{w}_h}\right) := \boldsymbol{\Pi} \left(\boldsymbol{v}\right)$.
Then using the discrete trace inequality~\eqref{l:discrete_trace_inverse} and the fact that the projection is a bounded operator, we get
\begin{align}
\label{eq:bound_operator}
\nonumber ||| \left(\boldsymbol{w_h}, {\tilde{w}_h}\right) |||^2 & = \sum_{K \in \mathcal{T}_h} \nu \left(\left|\boldsymbol{w_h}\right|_{H^1(K)}^2 + h_K \left\| \boldsymbol{\partial_n w_h}\right\|_{\partial K}^2 + \frac{\tau}{h_K} \left\|\Phi^{k-1} \left(\left(\boldsymbol{w_h}\right)_t - {\tilde{w}_h}\right)\right\|_{\partial K}^2 \right) \\
& \leq \sum_{K \in \mathcal{T}_h}\nu \left( \left(1+C_{max}^2\right)  \left|\boldsymbol{w_h}\right|_{H^1(K)}^2 + \frac{\tau}{h_K} \left\|\left(\boldsymbol{w_h}\right)_t - {\tilde{w}_h}\right\|_{\partial K}^2 \right).
\end{align}
%%%%%%%%%%%%%%%%%%%%%%%%%%%%%%%%%%%%%%%%%%%%%%%%%%
%% Triangle inequality
Applying the triangle inequality for the last term of~\eqref{eq:bound_operator} we arrive at
\begin{align}
\label{eq:inf_sup_terms}
\nonumber |||\left(\boldsymbol{w_h}, {\tilde{w}_h}\right) |||^2 & \leq \sum_{K \in \mathcal{T}_h} \nu \left(\left(1+C_{max}^2\right) \left|\boldsymbol{w_h}\right|_{H^1(K)}^2 + \frac{2\tau}{h_K} \left( \left\|\left(\boldsymbol{w_h}\right)_t - {v}_t\right\|_{\partial K}^2 + \left\|{v}_t - {\tilde{w}_h}\right\|_{\partial K}^2 \right) \right) \\
& =: \sum_{K \in \mathcal{T}_h}  \nu \left(\left(1+C_{max}^2\right) \mathfrak{T}_1^K + \frac{2\tau}{h_K} \left(\mathfrak{T}_2^K + \mathfrak{T}_3^K\right) \right). 
\end{align}
%%%%%%%%%%%%%%%%%%%%%%%%%%%%%%%%%%%%%%%%%%%%%%%%%%
%% First term
Using the stability of $\Pi^k$ we get
\begin{equation}
\label{eq:first_term_inf_sup}
\mathfrak{T}_1^K = \left|\Pi^k \left(\boldsymbol{v}\right)\right|_{H^1\left(K\right)}^2 \leq {c_1} \left| \boldsymbol{v}\right|_{H^1\left(K\right)}^2.
\end{equation}
%%%%%%%%%%%%%%%%%%%%%%%%%%%%%%%%%%%%%%%%%%%%%%%%%%
%% Second term
Using~\eqref{l:BDM_approximation} and the local trace inequality~\eqref{l:local_trace}, then 
\begin{align}
\label{eq:second_term_inf_sup}
  \nonumber
\mathfrak{T}_2^K \leq \left\|\boldsymbol{v} - \boldsymbol{w_h}\right\|_{\partial K}^2 
& \leq \tilde{c}_1 \left(\frac{1}{h_K} \left\|\boldsymbol{v} - \boldsymbol{w_h}\right\|_{K}^2 + h_K \left|\boldsymbol{v} - \boldsymbol{w_h}\right|_{H^1\left(K\right)}^2 \right) \\
& \leq \tilde{c}_1 \left(\tilde{c}_2 h_K \left|\boldsymbol{v}\right|_{H^1\left(K\right)}^2 + \tilde{c}_3 h_K \left|\boldsymbol{v}\right|_{H^1\left(K\right)}^2 \right) \leq \tilde{c}_1 \left(\tilde{c}_2 + \tilde{c}_3\right) h_K \left|\boldsymbol{v}\right|_{H^1\left(K\right)}^2.
\end{align}
%%%%%%%%%%%%%%%%%%%%%%%%%%%%%%%%%%%%%%%%%%%%%%%%%%
%% Last term
Finally, using the trace $L^2$-projection approximation~\eqref{l:trace_approximation} for the third term we get
\begin{align}
\label{eq:third_term_inf_sup}
\mathfrak{T}_3^K & \leq \tilde{c}_4  h_K \left|\boldsymbol{v}\right|_{H^1\left(K\right)}^2.
\end{align}
%%%%%%%%%%%%%%%%%%%%%%%%%%%%%%%%%%%%%%%%%%%%%%%%%%
%% Final result
Then collecting \eqref{eq:first_term_inf_sup}, \eqref{eq:second_term_inf_sup} and \eqref{eq:third_term_inf_sup}, we obtain~\eqref{eq:fortin_operator_stokes_second} with
\begin{equation*}
C :=  \sqrt{\Big(\left(1+C_{max}^2\right)c_1 + 2\tau \tilde{c}_1 \left(\tilde{c}_2 + \tilde{c}_3\right) + 2\tau \tilde{c}_4\Big)}, 
\end{equation*}
which finishes the proof.
\end{proof}
Using the last two results and the standard Babuska-Brezzi's results~\cite[Section~4.2]{MR3097958} we deduce there exists a unique solution of~\eqref{eq:TVNF_variational_formulation}. In addition, method~\eqref{eq:TVNF_variational_formulation} is consistent that the following result shows.
%%%%%%%%%%%%%%%%%%%%%%%%%%%%%%%%%%%%%%%%%%%%%%%%%%%%%%%%%%%%%%%%%%%%%%%%%%%%%%%%%%%%%%%%%%%%%%%%%%%%
%% Consistency
\begin{lemma}[Consistency]
\label{l:TVNF_consistency}
 Let $\left(\boldsymbol{u},p\right) \in \left[H^1\left(\Omega\right) \cap H^2\left(\mathcal{T}_h\right)\right]^2 \times L^2\left(\Omega\right)$ be the solution of the problem \eqref{eq:stokes_TVNF} and $\tilde{u} = u_t$ on all edges of $\mathcal{E}_h$. If $\left(\boldsymbol{u_h}, {\tilde{u}_h}, p_h \right) \in \boldsymbol{V_h} \times Q^{k-1}_h$ solves \eqref{eq:TVNF_variational_formulation}, then for all $\left(\boldsymbol{v_h}, {\tilde{v}_h}, q_h\right) \in \boldsymbol{V_h} \times Q^{k-1}_h$ the following holds
\begin{equation*}
 a\left(\left(\boldsymbol{u}-\boldsymbol{u_h},\tilde{u}- {\tilde{u}_h}\right),\left(\boldsymbol{v_h}, {\tilde{v}_h}\right)\right)+b\left(\left(\boldsymbol{u}-\boldsymbol{u_h},\tilde{u}- {\tilde{u}_h}\right), q_h\right)+b\left(\left(\boldsymbol{v_h}, {\tilde{v}_h}\right),p- p_h\right) = 0.
 \end{equation*}
\end{lemma}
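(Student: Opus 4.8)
The plan is to derive the Galerkin orthogonality by subtracting the discrete problem~\eqref{eq:TVNF_variational_formulation} from the two identities
\begin{align*}
a\big((\boldsymbol{u},\tilde{u}),(\boldsymbol{v_h},\tilde{v}_h)\big)+b\big((\boldsymbol{v_h},\tilde{v}_h),p\big)&=\int_{\Omega}\boldsymbol{f}\boldsymbol{v_h}\dx+\int_{\Gamma}g\left(\boldsymbol{v_h}\right)_n\ds,\\
b\big((\boldsymbol{u},\tilde{u}),q_h\big)&=0,
\end{align*}
which hold for the exact solution $(\boldsymbol{u},p)$ of~\eqref{eq:stokes_TVNF} (with $\tilde{u}=u_t$ on $\mathcal{E}_h$) and all $(\boldsymbol{v_h},\tilde{v}_h,q_h)\in\boldsymbol{V_h}\times Q_h^{k-1}$; note that Lemma~\ref{l:TVNF_continuity} ensures $a$ and $b$ are well defined on the space to which $(\boldsymbol{u},\tilde{u})$ belongs. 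Granting these, subtracting~\eqref{eq:TVNF_variational_formulation} and using bilinearity gives $a((\boldsymbol{u}-\boldsymbol{u_h},\tilde{u}-\tilde{u}_h),(\boldsymbol{v_h},\tilde{v}_h))+b((\boldsymbol{v_h},\tilde{v}_h),p-p_h)=0$ together with $b((\boldsymbol{u}-\boldsymbol{u_h},\tilde{u}-\tilde{u}_h),q_h)=0$, and adding the two yields the statement. The second identity is immediate: by~\eqref{eq:TVNF_b_form} and $\nabla\cdot\boldsymbol{u}=0$ in $\Omega$ one has $b((\boldsymbol{u},\tilde{u}),q_h)=-\sum_{K\in\mathcal{T}_h}\int_K q_h\,\nabla\cdot\boldsymbol{u}\dx=0$.

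For the first identity I would simply retrace the derivation~\eqref{eq:partial_integration}--\eqref{eq:viscous_part}, which was performed exactly for a solution of~\eqref{eq:stokes_TVNF} tested against $\boldsymbol{v_h}\in\boldsymbol{BDM_h^k}$, now keeping also the multiplier test function $\tilde{v}_h\in M_{h,0}^{k-1}$. The ingredients are: elementwise integration by parts (legitimate since $\boldsymbol{u}|_K\in H^2(K)$, and the momentum equation supplies the needed $H^1(K)$ regularity of $p$ on each $K$); the orthogonality $\boldsymbol{n}\cdot\boldsymbol{t}=0$ to split the boundary term into its $\sigma_{nn}\left(\boldsymbol{v_h}\right)_n$ and $\sigma_{nt}\left(\boldsymbol{v_h}\right)_t$ parts; the row-wise $H(\mathrm{div},\Omega)$ regularity of $\boldsymbol{\sigma}$ (since $-\nabla\cdot\boldsymbol{\sigma}=\boldsymbol{f}\in[L^2(\Omega)]^2$) combined with $\left(\boldsymbol{v_h}\right)_n$ being single valued on interior edges (as $\boldsymbol{v_h}\in H(\mathrm{div},\Omega)$), which collapses $\sum_K\int_{\partial K}\sigma_{nn}\left(\boldsymbol{v_h}\right)_n\ds$ to $\int_{\Gamma}\sigma_{nn}\left(\boldsymbol{v_h}\right)_n\ds$; and the continuity of $\boldsymbol{\sigma_n}$ across interior edges together with $\tilde{v}_h$ being single valued and vanishing on $\Gamma$, which allows adding the vanishing term $0=\sum_K\int_{\partial K}\sigma_{nt}\tilde{v}_h\ds$. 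Using $\tilde{u}=u_t$ on $\mathcal{E}_h$ makes both the symmetrisation term $\varepsilon\int_{\partial K}\nu(u_t-\tilde{u})\left(\boldsymbol{\partial_n {v_h}}\right)_t\ds$ and the penalty term disappear, since $u_t-\tilde{u}=0$ and hence $\Phi^{k-1}(u_t-\tilde{u})=0$; finally, inserting the boundary conditions $\sigma_{nn}=g$ and $u_t=0$ (the latter consistent with $\tilde{u}=0$) on $\Gamma$, the right-hand side becomes exactly $a((\boldsymbol{u},\tilde{u}),(\boldsymbol{v_h},\tilde{v}_h))+b((\boldsymbol{v_h},\tilde{v}_h),p)-\int_{\Gamma}g\left(\boldsymbol{v_h}\right)_n\ds$, while the left-hand side $-\int_{\Omega}\nabla\cdot(\nu\grad\boldsymbol{u})\boldsymbol{v_h}\dx+\int_{\Omega}\grad p\cdot\boldsymbol{v_h}\dx$ equals $\int_{\Omega}\boldsymbol{f}\boldsymbol{v_h}\dx$ by the momentum equation. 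Rearranging gives the first identity.

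I expect the only delicate point to be bookkeeping: making sure every edge integral in the derivation is meaningful for the exact solution under the stated (element-broken $H^2$, global $H^1$) regularity, and that the two manipulations on interior edges — collapsing a sum to $\Gamma$ and adding a vanishing sum — are carried out with the correct orientation of the unit normal on the pair of elements sharing each edge (so that $\sigma_{nt}$ and $\left(\boldsymbol{v_h}\right)_n$ reverse sign while $\tilde{v}_h$ and $\sigma_{nn}$ do not). This is precisely where the $H(\mathrm{div})$-conformity of both $\boldsymbol{\sigma}$ and $\boldsymbol{v_h}$ and the single-valuedness of $\tilde{v}_h$ are used; everything else is a line-by-line re-reading of~\eqref{eq:partial_integration}--\eqref{eq:viscous_part}.
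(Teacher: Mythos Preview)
Your proposal is correct and follows essentially the same approach as the paper: show that the exact solution $(\boldsymbol{u},\tilde{u},p)$ satisfies the discrete variational system~\eqref{eq:TVNF_variational_formulation} (by rereading the derivation~\eqref{eq:partial_integration}--\eqref{eq:viscous_part} and noting the added terms vanish when $\tilde{u}=u_t$), then subtract. The paper's own proof is simply a two-line reference back to Section~\ref{sec:TVNF_formulation}, whereas you have spelled out the bookkeeping in more detail, but the argument is the same.
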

%%%%%%%%%%%%%%%%%%%%%%%%%%%%%%%%%%%%%%%%%%%%%%%%%%
%% Consistency proof
\begin{proof}
As we have seen in Section~\ref{sec:TVNF_formulation}, all added terms are zero for $\left(\boldsymbol{u}, \tilde{u}\right)$. Thus
\begin{equation*}
\left\{
\begin{array}{rclcl}
a\left(\left(\boldsymbol{u},\tilde{u}\right),\left(\boldsymbol{v_h}, {\tilde{v}_h}\right)\right)&+&b\left(\left(\boldsymbol{v_h}, {\tilde{v}_h}\right),p\right) &=& \displaystyle\int_{\Omega}\boldsymbol{f}\boldsymbol{v_h} \dx + \int_{\Gamma}g\left(\boldsymbol{v_h}\right)_n \ds \\
&&b\left(\left(\boldsymbol{u},\tilde{u}\right), q_h\right) &=& 0
\end{array}
\right. ,
 \end{equation*}
which proves the result.
\end{proof}

%%%%%%%%%%%%%%%%%%%%%%%%%%%%%%%%%%%%%%%%%%%%%%%%%%%%%%%%%%%%%%%%%%%%%%%%%%%%%%%%%%%%%%%%%%%%%%%%%%%%%%%%%%%%
% Error analysis for the Stokes
%%%%%%%%%%%%%%%%%%%%%%%%%%%%%%%%%%%%%%%%%%%%%%%%%%%%%%%%%%%%%%%%%%%%%%%%%%%%%%%%%%%%%%%%%%%%%%%%%%%%%%%%%%%%

\subsection{Error analysis}
\label{sec:TVNF_error}
In this section we present the error estimates for the method. These estimates are proved using the following norm
\begin{equation}
	|||(\boldsymbol{u}, \tilde{u}, p)|||_h := |||(\boldsymbol{u}, \tilde{u})||| + \frac{1}{\sqrt{\nu}}\|p\|_{\Omega}.
\end{equation}
 The first step is the following version of Cea's lemma.

%%%%%%%%%%%%%%%%%%%%%%%%%%%%%%%%%%%%%%%%%%%%%%%%%%
%% Cea's Lemma
\begin{lemma}%[Cea's Lemma]
\label{l:TVNF_Cea}
Let $\left(\boldsymbol{u},p\right) \in \left[H^1\left(\Omega\right) \cap H^2\left(\mathcal{T}_h\right)\right]^2 \times L^2\left(\Omega\right)$ be the solution of~\eqref{eq:stokes_TVNF}, $\tilde{u} = u_t$ on all edges in $\mathcal{E}_h$, and $\left(\boldsymbol{u_h}, {\tilde{u}_h}, p_h \right) \in \boldsymbol{V_h} \times Q^{k-1}_h$ these of~\eqref{eq:TVNF_variational_formulation}. Then there exists $C > 0$, independent of $h$ and $\nu$, such that
\begin{equation}
\label{eq:cea}
|||\left(\boldsymbol{u}-\boldsymbol{u_h},\tilde{u}- {\tilde{u}_h}, p - p_h\right)|||_h \leq C \inf_{\left(\boldsymbol{v_h}, {\tilde{v}_h}, q_h\right) \in \boldsymbol{V_h} \times Q^{k-1}_h} |||\left(\boldsymbol{u}-\boldsymbol{v_h},\tilde{u}- {\tilde{v}_h},p - q_h\right)|||_h.
\end{equation}
\end{lemma}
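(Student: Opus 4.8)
The plan is to prove this Cea-type estimate by combining the stability of the discrete problem (coercivity of $a$ from Lemma~\ref{l:TVNF_coercivity} and the inf-sup condition for $b$ from Lemma~\ref{l:TVNF_inf_sup}, which together give well-posedness via Babuska-Brezzi) with the consistency relation from Lemma~\ref{l:TVNF_consistency} and the continuity estimates from Lemma~\ref{l:TVNF_continuity}. The standard strategy runs as follows. Fix an arbitrary $\left(\boldsymbol{v_h}, {\tilde{v}_h}, q_h\right) \in \boldsymbol{V_h} \times Q^{k-1}_h$ and split the error by inserting this interpolant:
\[
\left(\boldsymbol{u}-\boldsymbol{u_h},\tilde{u}- {\tilde{u}_h}, p - p_h\right) = \left(\boldsymbol{u}-\boldsymbol{v_h},\tilde{u}- {\tilde{v}_h}, p - q_h\right) + \left(\boldsymbol{v_h}-\boldsymbol{u_h},{\tilde{v}_h}- {\tilde{u}_h}, q_h - p_h\right).
\]
The first term on the right is exactly the quantity appearing under the infimum, so by the triangle inequality it suffices to bound the purely discrete term $\left(\boldsymbol{e_h}, \tilde{e}_h, \varepsilon_h\right) := \left(\boldsymbol{v_h}-\boldsymbol{u_h},{\tilde{v}_h}- {\tilde{u}_h}, q_h - p_h\right) \in \boldsymbol{V_h} \times Q^{k-1}_h$ by a constant times $|||\left(\boldsymbol{u}-\boldsymbol{v_h},\tilde{u}- {\tilde{v}_h},p - q_h\right)|||_h$.

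To do this I would invoke the global inf-sup (Babuska-Brezzi) stability of the saddle-point bilinear form
\[
\mathcal{B}\left(\left(\boldsymbol{w_h}, \tilde{w}_h, r_h\right),\left(\boldsymbol{z_h}, \tilde{z}_h, s_h\right)\right) := a\left(\left(\boldsymbol{w_h}, \tilde{w}_h\right),\left(\boldsymbol{z_h}, \tilde{z}_h\right)\right) + b\left(\left(\boldsymbol{z_h}, \tilde{z}_h\right),r_h\right) + b\left(\left(\boldsymbol{w_h}, \tilde{w}_h\right),s_h\right),
\]
which follows from Lemmas~\ref{l:TVNF_coercivity} and~\ref{l:TVNF_inf_sup}: there is $\gamma > 0$, independent of $h$ and $\nu$, such that for every $\left(\boldsymbol{w_h}, \tilde{w}_h, r_h\right) \in \boldsymbol{V_h} \times Q^{k-1}_h$ one has
\[
\sup_{\left(\boldsymbol{z_h}, \tilde{z}_h, s_h\right)} \frac{\mathcal{B}\left(\left(\boldsymbol{w_h}, \tilde{w}_h, r_h\right),\left(\boldsymbol{z_h}, \tilde{z}_h, s_h\right)\right)}{|||\left(\boldsymbol{z_h}, \tilde{z}_h, s_h\right)|||_h} \geq \gamma \, |||\left(\boldsymbol{w_h}, \tilde{w}_h, r_h\right)|||_h.
\]
Applying this with $\left(\boldsymbol{w_h}, \tilde{w}_h, r_h\right) = \left(\boldsymbol{e_h}, \tilde{e}_h, \varepsilon_h\right)$, I then use consistency (Lemma~\ref{l:TVNF_consistency}), which gives $\mathcal{B}\left(\left(\boldsymbol{u}-\boldsymbol{u_h},\tilde{u}- {\tilde{u}_h}, p - p_h\right),\left(\boldsymbol{z_h}, \tilde{z}_h, s_h\right)\right) = 0$, to rewrite
\[
\mathcal{B}\left(\left(\boldsymbol{e_h}, \tilde{e}_h, \varepsilon_h\right),\left(\boldsymbol{z_h}, \tilde{z}_h, s_h\right)\right) = -\,\mathcal{B}\left(\left(\boldsymbol{u}-\boldsymbol{v_h},\tilde{u}- {\tilde{v}_h}, p - q_h\right),\left(\boldsymbol{z_h}, \tilde{z}_h, s_h\right)\right).
\]
Bounding the right-hand side by the continuity estimates~\eqref{eq:continuity_a} and~\eqref{eq:continuity_b} (noting that $\boldsymbol{u} \in \left[H^1\left(\Omega\right) \cap H^2\left(\mathcal{T}_h\right)\right]^2$ so that Lemma~\ref{l:TVNF_continuity} applies, and that $\tilde{v}_h \in L^2(\mathcal{E}_h)$) yields
\[
\left|\mathcal{B}\left(\left(\boldsymbol{e_h}, \tilde{e}_h, \varepsilon_h\right),\left(\boldsymbol{z_h}, \tilde{z}_h, s_h\right)\right)\right| \leq C \, |||\left(\boldsymbol{u}-\boldsymbol{v_h},\tilde{u}- {\tilde{v}_h}, p - q_h\right)|||_h \; |||\left(\boldsymbol{z_h}, \tilde{z}_h, s_h\right)|||_h,
\]
and combining with the inf-sup bound gives $|||\left(\boldsymbol{e_h}, \tilde{e}_h, \varepsilon_h\right)|||_h \leq (C/\gamma)\, |||\left(\boldsymbol{u}-\boldsymbol{v_h},\tilde{u}- {\tilde{v}_h}, p - q_h\right)|||_h$. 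A final triangle inequality and taking the infimum over $\left(\boldsymbol{v_h}, {\tilde{v}_h}, q_h\right)$ delivers~\eqref{eq:cea}.

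The main obstacle — and the place where care is needed — is the tracking of the $\nu$-dependence so that the final constant is genuinely independent of $\nu$ (as claimed). This requires the weighting $\frac{1}{\sqrt{\nu}}\|p\|_\Omega$ in $|||\cdot|||_h$ to be compatible with the $\sqrt{2/\nu}$ factor in~\eqref{eq:continuity_b} and with the $\beta/\sqrt{\nu}$ scaling in Lemma~\ref{l:TVNF_inf_sup}: one has to check that in assembling the global inf-sup constant $\gamma$ from $\alpha$ (coercivity) and $\beta$ (inf-sup for $b$) via the standard Brezzi argument, all the $\nu$ factors cancel. A secondary technical point is verifying that Lemma~\ref{l:TVNF_continuity} is genuinely applicable to the mixed argument $\left(\boldsymbol{u}-\boldsymbol{v_h}, \tilde{u}-\tilde{v}_h\right)$ — which it is, since $\boldsymbol{u}-\boldsymbol{v_h} \in \left[H^1(\Omega)\cap H^2(\mathcal{T}_h)\right]^2$ and the trace term in~\eqref{eq:continuity_triangle_inequality} was estimated precisely for such functions — and that consistency (Lemma~\ref{l:TVNF_consistency}) is stated exactly in the form needed here. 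Everything else is routine Babuska-Brezzi bookkeeping.
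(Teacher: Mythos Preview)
Your proposal is correct and follows essentially the same route as the paper: define the combined saddle-point form $\mathcal{B}$ (the paper calls it $B$), derive its global inf-sup stability on $\boldsymbol{V_h}\times Q_h^{k-1}$ from Lemmas~\ref{l:TVNF_coercivity} and~\ref{l:TVNF_inf_sup} (the paper cites \cite[Proposition~2.36]{MR2050138} for this step), use consistency (Lemma~\ref{l:TVNF_consistency}) to replace the discrete error by the approximation error inside $\mathcal{B}$, bound via continuity (Lemma~\ref{l:TVNF_continuity}), and finish with the triangle inequality, obtaining $C = 1 + C_B/\beta_B$. Your remarks on the $\nu$-scaling and on the applicability of Lemma~\ref{l:TVNF_continuity} to the mixed argument are exactly the points that make the argument go through.
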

%%%%%%%%%%%%%%%%%%%%%%%%%%%%%%%%%%%%%%%%%%%%%%%%%%
%% Cea's proof
\begin{proof}
Let us denote
\begin{equation*}
B \left(\left(\boldsymbol{w_h}, {\tilde{w}_h}, r_h \right),\left(\boldsymbol{v_h}, {\tilde{v}_h}, q_h\right)\right) := a \left(\left(\boldsymbol{w_h}, {\tilde{w}_h}\right),\left(\boldsymbol{v_h}, {\tilde{v}_h}\right)\right) + b\left(\left(\boldsymbol{v_h}, {\tilde{v}_h}\right), r_h \right) + b\left(\left(\boldsymbol{w_h}, {\tilde{w}_h}\right), q_h\right).
\end{equation*}
%%%%%%%%%%%%%%%%%%%%%%%%%%%%%%%%%%%%%%%%%%%%%%%%%%
%% Inf-sup condition
Using Lemmas~\ref{l:TVNF_coercivity} and~\ref{l:TVNF_inf_sup}, and~\cite[Preposition 2.36]{MR2050138}, we get the following stability for $B$. \\
There exists $\beta_B > 0$, independent of $h$ and $\nu$, such that for all $\left(\boldsymbol{v_h}, {\tilde{v}_h}, q_h \right) \in \boldsymbol{V_h} \times Q^{k-1}_h$ there exists $\left(\boldsymbol{w_h}, {\tilde{w}_h}, r_h\right) \in \boldsymbol{V_h} \times Q^{k-1}_h$ such that $|||\left(\boldsymbol{w_h}, {\tilde{w}_h},r_h\right)|||_h=1$, and 
\begin{equation}
\label{eq:inf_sup_l}
B \left(\left(\boldsymbol{v_h}, {\tilde{v}_h}, q_h\right),\left(\boldsymbol{w_h}, {\tilde{w}_h}, r_h \right)\right) \geq \beta_B |||\left(\boldsymbol{v_h}, {\tilde{v}_h}, q_h\right)|||_h .
\end{equation}
%%%%%%%%%%%%%%%%%%%%%%%%%%%%%%%%%%%%%%%%%%%%%%%%%%
%% Boundedness
Now using Lemma~\ref{l:TVNF_continuity}, we get continuity of $B$, there exists $C_B > 0$
\begin{align}
\label{eq:continuity_l}
\left|B \left(\left(\boldsymbol{w_h}, {\tilde{w}_h}, r_h \right),\left(\boldsymbol{v_h}, {\tilde{v}_h}, q_h\right)\right)\right| & \leq 
C_B |||\left(\boldsymbol{w_h}, {\tilde{w}_h}, r_h\right)|||_h |||\left(\boldsymbol{v_h}, {\tilde{v}_h},q_h\right)|||_h.
\end{align}
%%%%%%%%%%%%%%%%%%%%%%%%%%%%%%%%%%%%%%%%%%%%%%%%%%
%% Triangle inequality
Let $\left(\boldsymbol{v_h}, {\tilde{v}_h}, q_h\right) \in \boldsymbol{V_h}$. 
%%%%%%%%%%%%%%%%%%%%%%%%%%%%%%%%%%%%%%%%%%%%%%%%%%
%% Second term
Then, using Lemma~\ref{l:TVNF_consistency}, the triangle inequality, \eqref{eq:inf_sup_l} and~\eqref{eq:continuity_l} we arrive at
\begin{align*}
|||\left(\boldsymbol{v_h} - \boldsymbol{u_h},\tilde{v}_h- \tilde{u}_h,q_h-p_h\right)|||_h & \leq \frac{1}{\beta_B} B \left(\left(\boldsymbol{v_h} - \boldsymbol{u}, {\tilde{v}_h}- \tilde{u}, q_h-p\right),\left(\boldsymbol{w_h}, {\tilde{w}_h}, r_h\right)\right) \\
& + \frac{1}{\beta_B} B \left(\left(\boldsymbol{u}-\boldsymbol{u_h},\tilde{u}- {\tilde{u}_h},p- p_h\right),\left(\boldsymbol{w_h}, {\tilde{w}_h}, r_h\right)\right) \\
& \leq \frac{C_B}{\beta_B} |||\left(\boldsymbol{v_h} - \boldsymbol{u},{\tilde{v}_h}- \tilde{u},q_h-p\right)|||_h.
\end{align*}
Thus, we get~\eqref{eq:cea} with $C := 1+\frac{C_B}{\beta_B}$.
\end{proof}
 Using standard interpolation estimates, the following error estimate is proved.
%%%%%%%%%%%%%%%%%%%%%%%%%%%%%%%%%%%%%%%%%%%%%%%%%%
%% hdG norm estimation
\begin{lemma}[hdG error]
\label{l:TVNF_hdG_error}
 Let us assume $\left(\boldsymbol{u},p\right) \in \left[H^1\left(\Omega\right) \cap H^{k+1}\left(\mathcal{T}_h\right)\right]^2 \times H^k\left(\mathcal{T}_h\right)$ is the solution of~\eqref{eq:stokes_TVNF}, and $\tilde{u} = u_t$ on all edges in $\mathcal{E}_h$. If $\left(\boldsymbol{u_h}, {\tilde{u}_h}, p_h \right) \in \boldsymbol{V_h} \times Q^{k-1}_h$ solves the discrete problem~\eqref{eq:TVNF_variational_formulation}, then there exists $C > 0$, independent of $h$, such that
\begin{equation}
\label{eq:hdG_error}
|||\left(\boldsymbol{u}-\boldsymbol{u_h},\tilde{u}- {\tilde{u}_h},p - p_h\right)|||_h  \leq C h^k \left(\sqrt{\nu} \|\boldsymbol{u}\|_{H^{k+1}\left(\mathcal{T}_h\right)} +\frac{1}{\sqrt{\nu}}\|p\|_{H^k\left(\mathcal{T}_h\right)} \right).
\end{equation}
\end{lemma}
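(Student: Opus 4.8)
\emph{Proof proposal.} The plan is to combine the abstract estimate of Lemma~\ref{l:TVNF_Cea} with the interpolation bounds collected above. By~\eqref{eq:cea} it suffices to exhibit \emph{one} triple $(\boldsymbol{v_h},\tilde v_h,q_h)\in\boldsymbol{V_h}\times Q_h^{k-1}$ for which $|||(\boldsymbol{u}-\boldsymbol{v_h},\tilde u-\tilde v_h,p-q_h)|||_h$ is of order $h^k$, so I would take the natural interpolants $\boldsymbol{v_h}:=\Pi^k(\boldsymbol{u})$, $\tilde v_h:=\Phi^{k-1}(u_t)$, and $q_h|_K:=\Psi^{k-1}_K(p)$. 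Note that $\tilde v_h\in M_{h,0}^{k-1}$ as required: on a boundary edge $u_t=0$ by the TVNF condition, hence $\Phi^{k-1}(u_t)=0$ there. With this choice the right-hand side of~\eqref{eq:cea} becomes $|||(\boldsymbol{u}-\Pi^k\boldsymbol{u},\,u_t-\Phi^{k-1}u_t)|||+\tfrac{1}{\sqrt\nu}\|p-q_h\|_\Omega$, and the remaining work is to estimate these two pieces.

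For the velocity seminorm I would bound the three contributions in~\eqref{eq:TVNF_norm} element by element. The gradient term is controlled directly by~\eqref{l:BDM_approximation} with $m=k+1$, giving $|\boldsymbol{u}-\Pi^k\boldsymbol{u}|_{H^1(K)}\le Ch_K^k|\boldsymbol{u}|_{H^{k+1}(K)}$. For the normal-derivative term, since $|\boldsymbol{n}|=1$ one has $\|\boldsymbol{\partial_n}(\boldsymbol{u}-\Pi^k\boldsymbol{u})\|_{\partial K}\le\|\grad(\boldsymbol{u}-\Pi^k\boldsymbol{u})\|_{\partial K}$, to which the local trace inequality~\eqref{l:local_trace} applies componentwise; this reduces $h_K\|\boldsymbol{\partial_n}(\boldsymbol{u}-\Pi^k\boldsymbol{u})\|_{\partial K}^2$ to $C\big(|\boldsymbol{u}-\Pi^k\boldsymbol{u}|_{H^1(K)}^2+h_K^2|\boldsymbol{u}-\Pi^k\boldsymbol{u}|_{H^2(K)}^2\big)$, which is $Ch_K^{2k}|\boldsymbol{u}|_{H^{k+1}(K)}^2$ once one uses~\eqref{l:BDM_approximation} together with the second-order BDM bound $|\boldsymbol{u}-\Pi^k\boldsymbol{u}|_{H^2(K)}\le Ch_K^{k-1}|\boldsymbol{u}|_{H^{k+1}(K)}$ (a standard scaling argument, or~\eqref{l:BDM_approximation} combined with the inverse inequality~\eqref{l:local_inverse} applied to $\Pi^k\boldsymbol{u}$ minus an $L^2$-quasi-interpolant of $\boldsymbol{u}$). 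For the stabilisation term I would first simplify the argument of $\Phi^{k-1}$: writing $\boldsymbol{w_h}=\boldsymbol{u}-\Pi^k\boldsymbol{u}$ and $\tilde w_h=u_t-\Phi^{k-1}u_t$, linearity and idempotency of $\Phi^{k-1}$ give $\Phi^{k-1}\big((\boldsymbol{w_h})_t-\tilde w_h\big)=\Phi^{k-1}\big(u_t-(\Pi^k\boldsymbol{u})_t\big)$, whose $L^2(\partial K)$-norm is at most $\|\boldsymbol{u}-\Pi^k\boldsymbol{u}\|_{\partial K}$ because $\Phi^{k-1}$ is an $L^2$-orthogonal projection on each edge and $|\boldsymbol{t}|=1$; one more use of~\eqref{l:local_trace} and~\eqref{l:BDM_approximation} gives $\|\boldsymbol{u}-\Pi^k\boldsymbol{u}\|_{\partial K}\le Ch_K^{k+1/2}|\boldsymbol{u}|_{H^{k+1}(K)}$, so that $\tfrac{1}{h_K}\|\cdot\|_{\partial K}^2\le Ch_K^{2k}|\boldsymbol{u}|_{H^{k+1}(K)}^2$. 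Summing over $K$ and taking square roots yields $|||(\boldsymbol{u}-\Pi^k\boldsymbol{u},\,u_t-\Phi^{k-1}u_t)|||\le C\sqrt\nu\,h^k\|\boldsymbol{u}\|_{H^{k+1}(\mathcal{T}_h)}$.

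The pressure piece is immediate from~\eqref{l:L2_approximation} with $m=k$: $\|p-\Psi^{k-1}_K p\|_K\le Ch_K^k|p|_{H^k(K)}$, whence $\tfrac{1}{\sqrt\nu}\|p-q_h\|_\Omega\le\tfrac{C}{\sqrt\nu}h^k\|p\|_{H^k(\mathcal{T}_h)}$. Inserting the two bounds into~\eqref{eq:cea} gives~\eqref{eq:hdG_error}. I expect the only non-routine point to be the normal-derivative term: it is the one place where an $H^2$-type control of the BDM interpolation error is needed, i.e. a bound slightly beyond the $L^2$ and $H^1$ estimates recorded in~\eqref{l:BDM_approximation}; the treatment of the stabilisation term also requires the small but essential algebraic observation that the doubly-projected consistency term collapses to $\Phi^{k-1}\big(u_t-(\Pi^k\boldsymbol{u})_t\big)$. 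Everything else is a direct application of the trace, inverse and approximation inequalities stated earlier together with Lemma~\ref{l:TVNF_Cea}.
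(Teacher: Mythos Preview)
Your proof is correct and follows essentially the same route as the paper: the same interpolants $(\Pi^k\boldsymbol{u},\Phi^{k-1}u_t,\Psi^{k-1}p)$ are plugged into Cea's lemma, and each contribution to the triple norm is bounded term by term. Two minor tactical differences are worth noting: (i) for the $H^2$-control of the BDM error needed in the normal-derivative term, the paper makes your ``standard scaling argument'' explicit by inserting the Lagrange interpolant $\mathcal{L}^k\boldsymbol{u}$ and applying the inverse inequality to the polynomial difference $\mathcal{L}^k\boldsymbol{u}-\Pi^k\boldsymbol{u}$; (ii) for the stabilisation term, your idempotency observation $\Phi^{k-1}\big((\boldsymbol{u}-\Pi^k\boldsymbol{u})_t-(u_t-\Phi^{k-1}u_t)\big)=\Phi^{k-1}\big(u_t-(\Pi^k\boldsymbol{u})_t\big)$ is actually a slight streamlining of the paper's argument, which instead splits via the triangle inequality into $\|\boldsymbol{u}-\Pi^k\boldsymbol{u}\|_{\partial K}$ and $\|u_t-\Phi^{k-1}u_t\|_{\partial K}$ and bounds each separately.
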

%%%%%%%%%%%%%%%%%%%%%%%%%%%%%%%%%%%%%%%%%%%%%%%%%%
%% hdG norm proof
\begin{proof}
Let us consider the Fortin operator $\boldsymbol{\Pi}$ defined in the proof of Lemma~\ref{l:TVNF_inf_sup}.  If $\boldsymbol{\Pi}\left(\boldsymbol{u}\right) = \left(\boldsymbol{w_h}, {\tilde{w}_h}\right)$, then by using the triangle inequality and boundedness of the projection~$\Phi^{k-1}$ we get
\begin{align}
\label{eq:hdG_error_triangle_inequality}
 |||\left(\boldsymbol{u}-\boldsymbol{w_h},\tilde{u}- {\tilde{w}_h}\right)|||^2 & = \sum_{K\in \mathcal{T}_h} \nu \left(\left|\boldsymbol{u} - \boldsymbol{w_h}\right|_{H^1(K)}^2 + h_K \left\|\partial_{\boldsymbol{n}} \left(\boldsymbol{u} - \boldsymbol{w_h}\right)\right\|_{\partial K}^2 \right. \\ \nonumber
& \ \left. + \frac{\tau}{h_K} \left\|\Phi^{k-1} \big(\left(\boldsymbol{u} - \boldsymbol{w_h}\right)_t - \left(\tilde{u} - {\tilde{w}_h}\right)\big)\right\|_{\partial K}^2 \right)\\ \nonumber
& \leq \sum_{K\in \mathcal{T}_h} \nu \left(\left|\boldsymbol{u} - \boldsymbol{w_h}\right|_{H^1(K)}^2 + h_K \left\|\partial_{\boldsymbol{n}} \left(\boldsymbol{u} - \boldsymbol{w_h}\right)\right\|_{\partial K}^2 \right. \\ \nonumber
& \ \left. + \frac{2c_1 \tau}{h_K} \left(\left\|\boldsymbol{u} - \boldsymbol{w_h} \right\|_{\partial K}^2 + \left\|\tilde{u} - {\tilde{w}_h}\right\|_{\partial K}^2\right) \right) \\ \nonumber
& =: \sum_{K\in \mathcal{T}_h} \nu \left( \mathfrak{T}^K_1 + h_K \mathfrak{T}^K_2 + \frac{2c_1 \tau}{h_K} \left(\mathfrak{T}^K_3 + \mathfrak{T}^K_4 \right)\right).
\end{align}
For the first term from~\eqref{eq:hdG_error_triangle_inequality}, we use the BDM approximation~\eqref{l:BDM_approximation} to get
\begin{equation}
\label{eq:hdG_error_first}
\mathfrak{T}_1^K \leq {c_2} h_K^{2k} \left|\boldsymbol{u}\right|_{H^{k+1}\left(K\right)}^2.
\end{equation}
Next we use the local trace inequality~\eqref{l:local_trace} to get
\begin{align}
\label{eq:hdG_error_second_local_trace}
\mathfrak{T}_2^K & \leq c_3 \left(\frac{1}{h_K}\left|\boldsymbol{u} - \boldsymbol{w_h}\right|_{H^1(K)}^2 + h_K\left|\boldsymbol{u} - \boldsymbol{w_h}\right|_{H^2(K)}^2\right).
\end{align}
Let $\mathcal{L}^k \boldsymbol{u}$ be the usual Lagrange interpolant of degree $k$ of $\boldsymbol{u}$ (see \cite[Example~1.31]{MR2050138}). Using the triangle inequality followed by the local inverse inequality~\eqref{l:local_inverse}, the local Lagrange approximation \cite[Example~1.106]{MR2050138} and~\eqref{l:BDM_approximation}, \eqref{eq:hdG_error_second_local_trace} becomes 
\begin{align}
\label{eq:hdG_error_second}
 \nonumber \mathfrak{T}_2^K & \leq c_3 \left(\frac{1}{h_K}\left|\boldsymbol{u} - \boldsymbol{w_h}\right|_{H^1(K)}^2 + 2h_K\left|\boldsymbol{u} - \mathcal{L}^k \boldsymbol{u}\right|_{H^2(K)}^2 + 2h_K\left|\mathcal{L}^k \boldsymbol{u} - \boldsymbol{w_h}\right|_{H^2(K)}^2\right) \\  \nonumber
& \leq c_3 \left(\left(c_4 + 2c_5 \right) h_K^{2k-1} \left|\boldsymbol{u}\right|_{H^{k+1}\left(K\right)}^2 + \frac{2 c_6}{h_K}\left|\mathcal{L}^k \boldsymbol{u} - \boldsymbol{w_h}\right|_{H^1(K)}^2\right) \\ \nonumber
% \end{align*}
% Now, using the triangle inequality, the local Lagrange approximation and~\eqref{l:BDM_approximation} once again we get
% \begin{align}
& \leq c_3 \left(\left(c_4 + 2c_5 \right) h_K^{2k-1} \left|\boldsymbol{u}\right|_{H^{k+1}\left(K\right)}^2 + \frac{4 c_6}{h_K}\left|\mathcal{L}^k \boldsymbol{u} - \boldsymbol{u}\right|_{H^1(K)}^2 + \frac{4 c_6}{h_K}\left|\boldsymbol{u} - \boldsymbol{w_h}\right|_{H^1(K)}^2\right) \\
& \leq c_3(c_4 + 2c_5 + 4c_6 (c_7 + c_8)) h_K^{2k-1} \left|\boldsymbol{u}\right|_{H^{k+1}\left(K\right)}^2 .
\end{align}
For the third term in~\eqref{eq:hdG_error_triangle_inequality}, we use~\eqref{l:local_trace} and~\eqref{l:BDM_approximation}, to get
\begin{align}
\label{eq:hdG_error_third}
\mathfrak{T}_3^K \leq c_9 \left(\frac{1}{h_K}\left\|\boldsymbol{u} - \boldsymbol{w_h}\right\|_{K}^2 + h_K\left|\boldsymbol{u} - \boldsymbol{w_h}\right|_{H^1(K)}^2\right) \leq c_9 c_{10} h_K^{2k+1} \left|\boldsymbol{u}\right|_{H^{k+1}\left(K\right)}^2.
\end{align}
The last term in~\eqref{eq:hdG_error_triangle_inequality} is bounded using~\eqref{l:trace_approximation} as follows
\begin{equation}
\label{eq:hdG_error_fourth}
\mathfrak{T}_4^K \leq c_{11}
h_K^{2k+1} \left|\boldsymbol{u}\right|_{H^{k+1}\left(K\right)}^2.
\end{equation}
%%%%%%%%%%%%%%%%%%%%%%%%%%%%%%%%%%%%%%%%%%%%%%%%%%
%% Pressure part
Finally, the local $L^2$-projection approximation~\eqref{l:L2_approximation} gives
\begin{equation}
\label{eq:hdG_error_pressure}
 \inf_{q \in Q_h^{k-1}} \left\|p- q_h\right\|_{\Omega} = \left\|p-\Psi_h^{k-1}(p)\right\|_{\Omega} \leq \tilde{c_1} h_K^{k} \|p\|_{H^{k}\left(\mathcal{T}_h\right)}.
\end{equation}
Thus, putting together~\eqref{eq:hdG_error_triangle_inequality} with~\eqref{eq:hdG_error_first}, \eqref{eq:hdG_error_second}, \eqref{eq:hdG_error_third}, \eqref{eq:hdG_error_fourth}, \eqref{eq:hdG_error_pressure} and shape regularity of the mesh we get
\begin{equation*}
\inf_{\left(\boldsymbol{v_h}, {\tilde{v}_h}, q_h\right) \in \boldsymbol{V_h}} |||\left(\boldsymbol{u}-\boldsymbol{v_h},\tilde{u}- {\tilde{v}_h}, p - q_h\right)|||_h \leq \hat{C} h^k \left( \sqrt{\nu} \|\boldsymbol{u}\|_{H^{k+1}\left(\mathcal{T}_h\right)} + \frac{1}{\sqrt\nu}\|p\|_{H^k\left(\mathcal{T}_h\right)} \right)
\end{equation*}
with
\begin{equation*}
 \hat{C} := \max \left\{\sqrt{c_2+ c_3(c_4 + 2c_5 + 4c_6 (c_7 + c_8))+ 2\tau c_1c_9 c_{10} + 2\tau c_1c_{11}}%c_9
 , \tilde{c}_1\right\},
\end{equation*}
and the result~\eqref{eq:hdG_error} follows from Lemma~\ref{l:TVNF_Cea}.
\end{proof}

\subsection{NVTF boundary conditions}
\label{sec:hdG_nvtf}

As we mentioned before, the analysis in case of NVTF boundary conditions~\eqref{eq:NVTF} is similar. Thus, we just highlight the main differences. So if we consider NVTF boundary conditions~\eqref{eq:NVTF}, then to discretise the velocity we use the following BDM space
\begin{align*}
 \boldsymbol{BDM_{h,0}^k} & := \left\{\boldsymbol{v_h} \in \boldsymbol{BDM_{h}^k}: \ {\left(\boldsymbol{v_h}\right)_n} = 0 \mbox{ on } \Gamma\right\}. 
\end{align*} 
For the Lagrange multiplier we use polynomial space $M_h^{k-1}$. And the pressure is discretised using
\begin{align*}
Q_{h,0}^{k-1} & := \left\{q_h \in Q_{h}^{k-1}: \ \int_{\Omega} q_h \dx = 0\right\} . 
\end{align*}
In this case our product space becomes $\boldsymbol{V_h} := \boldsymbol{BDM_{h,0}^k} \times M_h^{k-1}$ and we pose the following discrete problem.\\
\textit{Find $\left(\boldsymbol{u_h}, {\tilde{u}_h}, p_h\right) \in \boldsymbol{V_h} \times Q_{h,0}^{k-1}$ such that for all $\left(\boldsymbol{v_h}, {\tilde{v}_h}, q_h\right) \in \boldsymbol{V_h}\times Q_{h,0}^{k-1}$}
\begin{equation}
\label{eq:NVTF_variational_formulation}
 \left\{
		\begin{array}{rclcl}
   a \left(\left(\boldsymbol{u_h}, {\tilde{u}_h}\right),\left(\boldsymbol{v_h}, {\tilde{v}_h}\right)\right) & \ + & b\left(\left(\boldsymbol{v_h}, {\tilde{v}_h}\right), p_h\right) & = & \displaystyle\int_{\Omega}\boldsymbol{f}\boldsymbol{v_h} \dx + \int_{\Gamma} g \tilde{v}_h \ds \\
		 & & b\left(\left(\boldsymbol{u_h}, {\tilde{u}_h}\right), q_h\right) & = & 0
   \end{array}
 \right. .
\end{equation}
In obtaining~\eqref{eq:NVTF_variational_formulation} the only difference step in the derivation is that now~\eqref{eq:lagrange_multiplier} is replaced by
\begin{align*}
 -\int_{\Omega} \nabla \cdot \left(\nu \grad \boldsymbol{u} \right) \boldsymbol{v_h} \dx + \int_{\Omega} \grad p \cdot \boldsymbol{v_h} \dx &= \sum_{K \in \mathcal{T}_h} \left(\int_K \nu \grad \boldsymbol{u} : \grad \boldsymbol{v_h} \dx - \int_K p \nabla \cdot \boldsymbol{v_h} \dx  \right. \\ \nonumber
& \left.  - \int_{\partial K} \sigma_{nt}  \left(\left(\boldsymbol{v_h}\right)_t - \tilde{v}_h\right) \ds\right) - \int_{\Gamma} g  \tilde{v}_h \ds.
\end{align*}
Concerning the analysis, the proofs of all the results presented in the last sections remain essentially unchanged.

%%%%%%%%%%%%%%%%%%%%%%%%%%%%%%%%%%%%%%%%%%%%%%%%%%%%%%%%%%%%%%%%%%%%%%%%%%%%%%%%%%%%%%%%%%%%%%%%%%%%%%%%%%%%
% Domain decomposition
%%%%%%%%%%%%%%%%%%%%%%%%%%%%%%%%%%%%%%%%%%%%%%%%%%%%%%%%%%%%%%%%%%%%%%%%%%%%%%%%%%%%%%%%%%%%%%%%%%%%%%%%%%%%

\section{The domain decomposition preconditioner}
\label{sec:dd_one_level}

Let us assume that we have to solve the following linear system
$$
\mathbf{A} \boldsymbol{U} = \boldsymbol{F}
$$
where $\mathbf{A}$ is the matrix arising from discretisation of the Stokes
equations on the domain $\Omega$, $\boldsymbol{U}$ is the vector of
unknowns and $\boldsymbol{F}$ is the right hand side. To accelerate the
performance of an iterative Krylov method applied to this system we
will consider domain decomposition preconditioners which are naturally
parallel \cite[Chapter~3]{MR3450068}. They are based on an overlapping partition of the
computational domain.

Let $\{\mathcal{T}_{h,i}\}_{i=1}^N$ be a partition of the
triangulation $\mathcal{T}_h$. For an integer value $l \geq 0$, we
define an overlapping decomposition
$\{\mathcal{T}_{h,i}^{l}\}_{i=1}^N$ such that $\mathcal{T}_{h,i}^{l}$
is a set of all triangles from $\mathcal{T}_{h,i}^{l-1}$ and all
triangles from $\mathcal{T}_h \setminus \mathcal{T}_{h,i}^{l-1}$ that
have non-empty intersection with $\mathcal{T}_{h,i}^{l-1}$, and
$\mathcal{T}_{h,i}^0 = \mathcal{T}_{h,i}$. With this definition the width of
the overlap will be of $2l$. Furthermore, if $W_h$
stands for the finite element space associated to $\mathcal{T}_h$,
$W_{h,i}^{l}$ is the local finite element spaces on
$\mathcal{T}_{h,i}^{l}$ that is a triangulation of $\Omega_i$. 

Let $\mathcal{N}$ be the set of indices of degrees of
freedom of $W_h$ and $\mathcal{N}_i^{l}$ the set of indices of degrees of freedom of $W_{h,i}^{l}$ for $l \geq 0$. Moreover, we define the restriction operator $\mathbf{R_i}: W_h \rightarrow W_{h,i}^{l}$ as a rectangular matrix $|\mathcal{N}_i^{l}| \times |\mathcal{N}|$ such that if $\boldsymbol{V}$ is the vector of degrees of freedom of $v_h \in W_h$, then $\mathbf{R_i} \boldsymbol{V}$ is the vector of degrees of freedom of $W_{h,i}^{l}$ in $\Omega_i$. Abusing notation we denote by $\mathbf{R_i}$ both the operator, and its associated matrix. The extension operator from $W_{h,i}^{l}$ to $W_h$ and its associated matrix are both given by $\mathbf{R_i}^T$. In addition we introduce a partition of unity $\mathbf{D_i}$ as a diagonal matrix $|\mathcal{N}_i^{l}| \times |\mathcal{N}_i^{l}|$ such that
\begin{equation}
\mathbf{Id} = \sum_{i=1}^N \mathbf{R_i}^T \mathbf{D_i} \mathbf{R_i},
\end{equation}
where $\mathbf{Id} \in \mathbb{R}^{|\mathcal{N}| \times |\mathcal{N}|}$ is the identity matrix.

We are ready to present the first preconditioner, called Restricted Additive Schwarz (RAS) \cite{MR1718707} %\cite[Chapter~1.4]{MR3450068}
, given by
\begin{equation}
\label{eq:dd_RAS}
\mathbf{M_{RAS}}^{-1} = \sum_{i=1}^N \mathbf{R_i}^T \mathbf{D_i} (\mathbf{R_i} \mathbf{A} \mathbf{R_i}^T)^{-1} \mathbf{R_i}.
\end{equation}
We also introduce a new preconditioner that is a modification of the above one. The modification is similar to the Optimized RAS \cite{MR2357620}, however we do not use Robin IC. For this, let $\mathbf{B_i}$ be the matrix associated to a discretisation of~\eqref{eq:stokes} in $\Omega_i$ where we impose either TVNF~\eqref{eq:TVNF} or NVTF~\eqref{eq:NVTF} boundary conditions in $\Omega_i$. Then, the preconditioner reads
\begin{equation}
\label{eq:dd_MRAS}
\mathbf{M_{MRAS}}^{-1} = \sum_{i=1}^N \mathbf{R_i}^T \mathbf{D_i B_i}^{-1} \mathbf{R_i}.
\end{equation}

\begin{remark}
The improvement of convergence in the case of Optimized RAS depends on the choice of the parameter. This parameter is depending on the problem and discretisation. The big advantage of MRAS preconditioners is that they are parameter-free.
\end{remark}

%%%%%%%%%%%%%%%%%%%%%%%%%%%%%%%%%%%%%%%%%%%%%%%%%%%%%%%%%%%%%%%%%%%%%%%%%%%%%%%%%%%%%%%%%%%%%%%%%%%%%%%%%%%%
% Partion of unity
%%%%%%%%%%%%%%%%%%%%%%%%%%%%%%%%%%%%%%%%%%%%%%%%%%%%%%%%%%%%%%%%%%%%%%%%%%%%%%%%%%%%%%%%%%%%%%%%%%%%%%%%%%%%

\subsection{Partion of unity}
\label{sec:dd_partition}

The above definitions of the preconditioners can be associated with any discretisation of the problem. However, each discretisation involves the construction of a relevant partition of unity $\mathbf{D_i}$, $i=1,\dots,N$. We
discuss here the construction of $\mathbf{D_i}$ when the problem~\eqref{eq:stokes} is discretised by the hdG method in case $k=1$, either with TVNF boundary conditions~\eqref{eq:TVNF_variational_formulation}, or NVTF boundary conditions~\eqref{eq:NVTF_variational_formulation}. Let us introduce the piecewise linear functions $\tilde{\chi}_i^{l}$ of $\mathcal{T}_{h}$ such that
\begin{equation*}
	\tilde{\chi}_i^{l} = \left\{
\begin{array}{l l}
	1 & \mbox{on all nodes of } \mathcal{T}_{h,i}^0, \\
	0 & \mbox{on other nodes.}
\end{array}
\right. 
\end{equation*}
Now we define the piecewise linear functions $\chi_i^{l}$ of $\mathcal{T}_{h,i}^{l}$ as follows
\begin{equation*}
	\chi_i^{l} := \frac{\tilde{\chi}_i^l}{\sum_{j=1}^{N} \tilde{\chi}_j^l}.
\end{equation*}
Obviously $\sum_{i=1}^{N} \chi_i^l = 1$. We define the partition of
unity matrix $\mathbf{D_i}$ as a block diagonal matrix where first block $\mathbf{D_i^{BDM}}$ is associated with $\boldsymbol{BDM_h^1}$, second $\mathbf{D_i^{M}}$ with $M_h^{0}$ and third $\mathbf{D_i^{Q}}$ with $Q_h^{0}$.
The degrees of freedom of the BDM elements are associated with the normal components on the edges of the mesh. For these finite elements, the diagonal of $\mathbf{D_i^{BDM}}$ is a vector obtained by interpolating $\chi_i^{l}$ at the two points of the edges. The degrees of freedom of the Lagrange multiplier finite elements are associated with the edges of the mesh. For these finite elements, the diagonal of $\mathbf{D_i^{M}}$ is a vector obtained by interpolating $\chi_i^{l}$ at the midpoints of the edges. For pressure finite elements, the diagonal of $\mathbf{D_i^{Q}}$ is a vector obtained by interpolating $\chi_i^{l}$ at the midpoints of the elements.

%%%%%%%%%%%%%%%%%%%%%%%%%%%%%%%%%%%%%%%%%%%%%%%%%%%%%%%%%%%%%%%%%%%%%%%%%%%%%%%%%%%%%%%%%%%%%%%%%%%%%%%%%%%%
% Numerical results
%%%%%%%%%%%%%%%%%%%%%%%%%%%%%%%%%%%%%%%%%%%%%%%%%%%%%%%%%%%%%%%%%%%%%%%%%%%%%%%%%%%%%%%%%%%%%%%%%%%%%%%%%%%%

\section{Numerical results}
\label{sec:numerics}

In this section we present a series of numerical experiments aimed at confirming the theory developed in Section~\ref{sec:hdg}, and to give a computational comparison of the preconditioners discussed in the previous section. All experiments have been made by using FreeFem++~\cite{MR3043640}, which is a free software specialised in variational discretisations of partial differential equations.

%%%%%%%%%%%%%%%%%%%%%%%%%%%%%%%%%%%%%%%%%%%%%%%%%%%%%%%%%%%%%%%%%%%%%%%%%%%%%%%%%%%%%%%%%%%%%%%%%%%%%%%%%%%%
% Numerical results
%%%%%%%%%%%%%%%%%%%%%%%%%%%%%%%%%%%%%%%%%%%%%%%%%%%%%%%%%%%%%%%%%%%%%%%%%%%%%%%%%%%%%%%%%%%%%%%%%%%%%%%%%%%%
\subsection{Convergence validation}
\label{sec:hdG_numerics}

The computational domain for both test cases considered here is the unit square $\Omega = \left(0,1\right)^2$. We present the results for
$k=1$, this is, the discrete space is given by $\boldsymbol{BDM_h^1} \times M_{h,0}^0 \times Q_h^0$ for TVNF boundary conditions and $\boldsymbol{BDM_{h,0}^1} \times M_h^0 \times Q_{h,0}^0$ for NVTF boundary conditions.  We test both the symmetric method
($\varepsilon=-1$) and the non-symmetric method ($\varepsilon=1$). For
both cases we have followed the recommendation given in~\cite[Section~2.5.2]{Lehrenfeldhesis} and taken $\tau =6$.

 The first example aims at verifying the formulation with
TVNF boundary conditions~\eqref{eq:TVNF_variational_formulation}. We
choose the right hand side $\boldsymbol{f}$ and the boundary datum $g$ such that the exact solution is given by
\begin{align*}
\boldsymbol{u} = \mbox{curl} \left[100\left(1-\cos((1-x)^2)\right)\sin(x^2)\sin(y^2)\left(1-\cos((1-y)^2)\right)\right], &&
p = \tan(xy).
\end{align*}
In Figures~\ref{fig:Trigon_sym} and \ref{fig:Trigon_non} we show the
results of the usual convergence order
tests for the symmetric case and the
non-symmetric case by plotting in log-log scale the error as a function of
the size of the mesh. We notice that they validate the theory from Section~\ref{sec:TVNF_error}. In addition, an optimal $h^2$ convergence rate is observed for $\|\boldsymbol{u}-\boldsymbol{u_h}\|_{\Omega}$. The proof of this fact is lacking, but it does not seem to be an easy task due to the nature of the boundary condition of problem~\eqref{eq:stokes}.

\begin{figure}[!ht]
\centering
    \subfloat[Symmetric bilinear form ($\varepsilon = -1$)\label{fig:Trigon_sym}]{%
      \includegraphics[width=0.47\textwidth]{./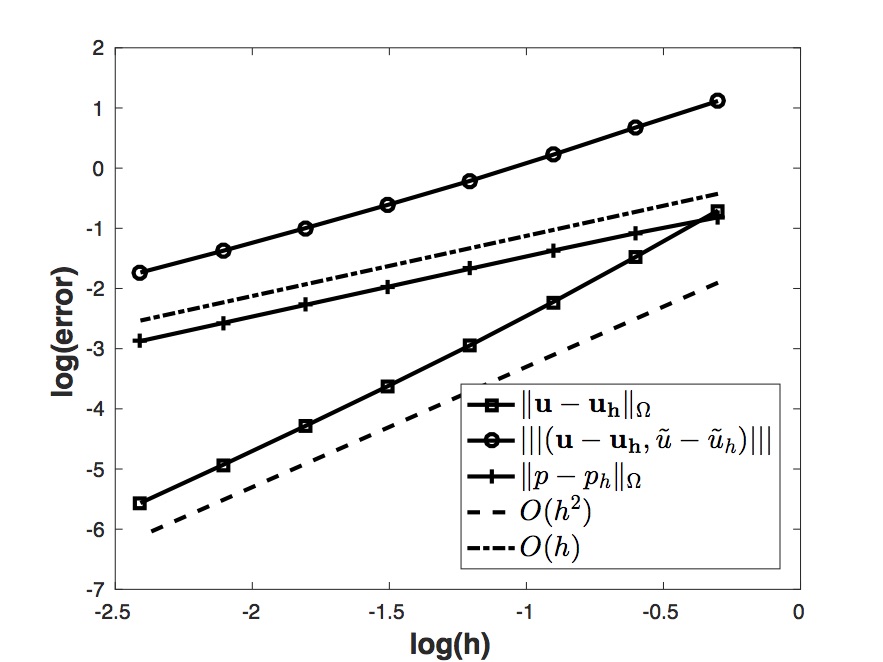}
    }
    \subfloat[Non-symmetric bilinear form ($\varepsilon = 1$)\label{fig:Trigon_non}]{%
      \includegraphics[width=0.47\textwidth]{./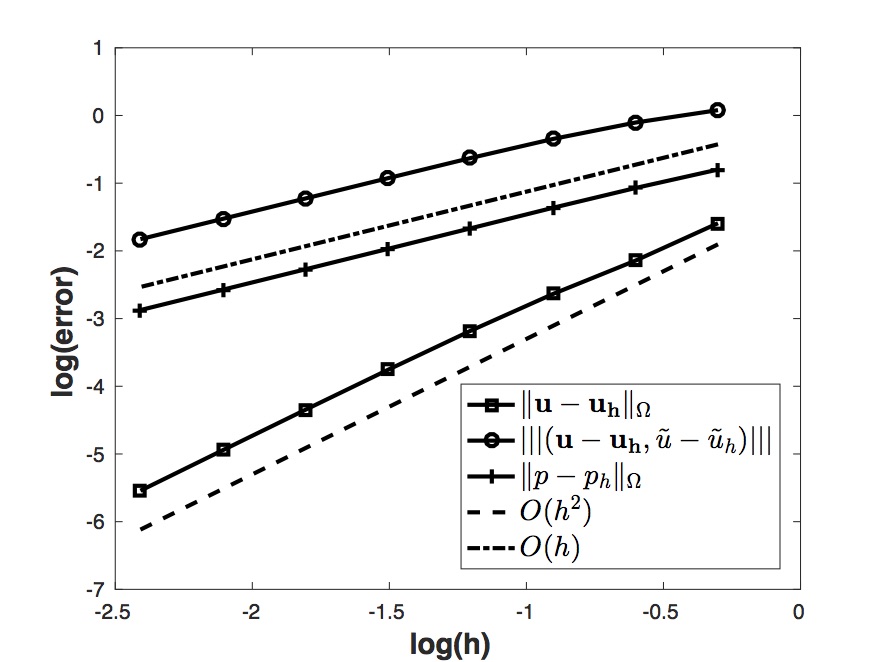}
    }
  \caption{Error convergence of the hdG method with TVNF boundary condition - the first example}
  \end{figure}
%   \newpage
 The second example aims at verifying the formulation with
NVTF boundary conditions \eqref{eq:NVTF_variational_formulation}. We
choose the right hand side $\boldsymbol{f}$ and the boundary datum $g$ such that the exact solution is given by
\begin{align*}
\boldsymbol{u} = \mbox{curl} \left[x^2\left(1-x\right)^2y^2\left(1-y\right)^2\right], &&
p = x - y.
\end{align*}
In Figures~\ref{fig:bubble_sym} and \ref{fig:bubble_non} we show the
results of the usual convergence order
tests for the symmetric case and the
non-symmetric case by plotting in log-log scale the error as a function of
the size of the mesh. We notice that they validate the theory from Section~\ref{sec:TVNF_error}. And again, an optimal $h^2$ convergence rate is observed for $\|\boldsymbol{u}-\boldsymbol{u_h}\|_{\Omega}$.
\begin{figure}[!ht]
\centering
    \subfloat[Symmetric bilinear form ($\varepsilon = -1$)\label{fig:bubble_sym}]{%
      \includegraphics[width=0.47\textwidth]{./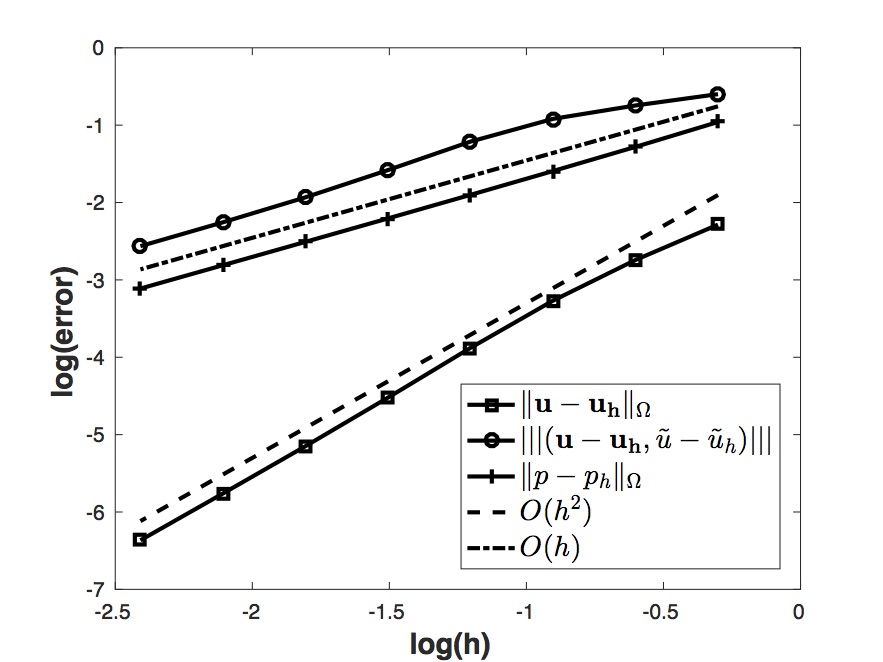}
    }
    \subfloat[Non-symmetric bilinear form ($\varepsilon = 1$)\label{fig:bubble_non}]{%
      \includegraphics[width=0.47\textwidth]{./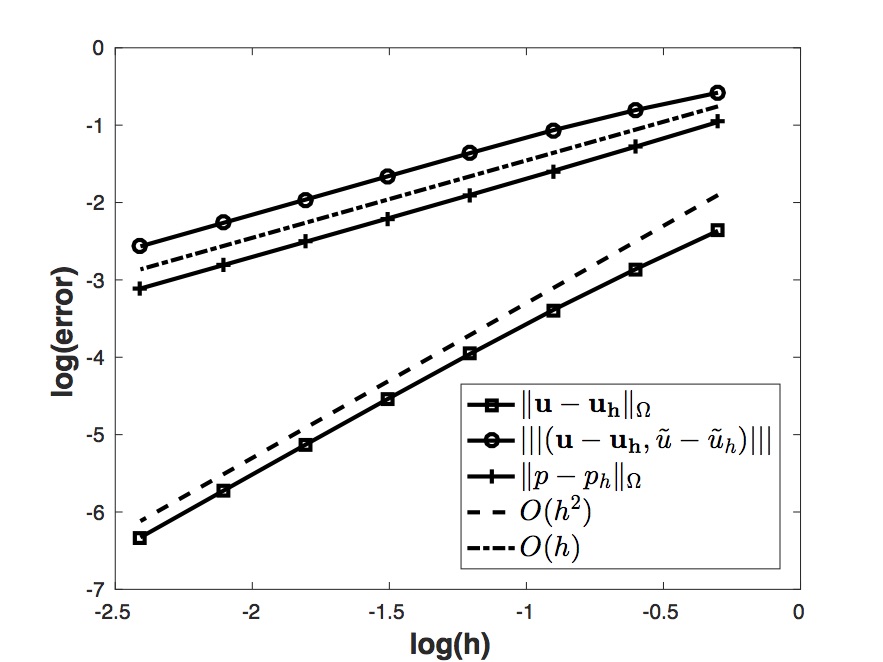}
    }
\caption{Error convergence of the hdG method with NVTF boundary condition - the second example}
  \end{figure}

%\newpage
%%%%%%%%%%%%%%%%%%%%%%%%%%%%%%%%%%%%%%%%%%%%%%%%%%%%%%%%%%%%%%%%%%%%%%%%%%%%%%%%%%%%%%%%%%%%%%%%%%%%%%%%%%%%
% Numerical results
%%%%%%%%%%%%%%%%%%%%%%%%%%%%%%%%%%%%%%%%%%%%%%%%%%%%%%%%%%%%%%%%%%%%%%%%%%%%%%%%%%%%%%%%%%%%%%%%%%%%%%%%%%%%

\subsection{Comparison of different domain decomposition preconditioners}
\label{sec:dd_numerics}

In this section we compare the standard RAS preconditoner~\eqref{eq:dd_RAS} with the newly
introduced preconditioners, that is the ones based on non standard
IC. We call them MRAS preconditioners~\eqref{eq:dd_MRAS} and more precisely
TVNF-MRAS for which $\mathbf{B_i}$ is the matrix arising from the discretisation of~\eqref{eq:stokes} in $\Omega_i$ with IC~\eqref{eq:TVNF} on
$\partial \Omega_i$, and NVTF-MRAS for which $\mathbf{B_i}$ is the matrix arising from the discretisation of \eqref{eq:stokes} in $\Omega_i$ with IC~\eqref{eq:NVTF} on $\partial \Omega_i$.  As we mentioned before, our preconditioners do not depend on the used discretisation, that is why we add also similar preconditioners but based
on a more standard discretisation, that is, the lowest order Taylor-Hood
discretisation~\cite[Chapter II, Section 4.2]{MR851383}. In all cases,
they are used in conjunction with a Krylov iterative solver such as GMRES~\cite{MR848568}. In addition, $N$ stands for the number of subdomains in all tables.  In all tables we present the number of iterations needed to
achieve an euclidean norm of the error (with respect to the one domain
solution) smaller than $10^{-6}$. We have implemented the RAS
preconditioner~\eqref{eq:dd_RAS} and the MRAS~\eqref{eq:dd_MRAS}, using both TVNF and NVTF interface conditions. 

We start with the second example from the previous
section. However, now we consider the symmetric
($\varepsilon=-1$) formulation with TVNF boundary
conditions~\eqref{eq:TVNF_variational_formulation}. The mesh is
uniform and contains 125 000 triangles for a total of 565 003 degrees
of freedom for the Taylor-Hood discretisation and 689 000 degrees of
freedom for the hdG discretisation. We use a random initial guess for
the GMRES iterative solver. The overlapping decomposition into subdomains can be
uniform (Unif) or generated by METIS (MTS) and it has two layers of mesh size $h$
in the overlap. 
\begin{table}[!ht]
\begin{center}
\begin{adjustbox}{max width=\textwidth}
  \begin{tabular}{c | c c | c c | c c || c c | c c | c c }
    & \multicolumn{6}{ c ||}{\textbf{Taylor-Hood}} & \multicolumn{6}{ c }{\textbf{hdG}} \\
    N & \multicolumn{2}{c |}{RAS} & \multicolumn{2}{c |}{NVTF-MRAS} & \multicolumn{2}{c ||}{TVNF-MRAS} & \multicolumn{2}{c |}{RAS} & \multicolumn{2}{c |}{NVTF-MRAS} & \multicolumn{2}{c}{TVNF-MRAS}  \\
    & Unif & MTS & Unif & MTS & Unif & MTS & Unif & MTS & Unif & MTS & Unif & MTS   \\
    \hline
		\textbf{4} & 133 & 311 & 40 & 39 & 37 & 37 & 58 & 95 & 41 & 45 & 53 & 50  \\
		\textbf{9} & 336 & 563 & 58 & 58 & 52 & 60 & 94 & 131 & 62 & 66 & 69 & 81  \\
		\textbf{16} & 315 & 691 & 60 & 76 & 59 & 73 & 101 & 151 & 68 & 85 & 80 & 100  \\
		\textbf{25} & 427 & 774 & 76 & 93 & 71 & 90 & 127 & 186 & 77 & 100 & 103  & 119  \\
		\textbf{64} & 630 & 1132 & 113 & 147 & 112 & 132 & 196 & 280 & 126 & 172 & 148 & 183  \\
		\textbf{100} & 769 & 1246 & 136 & 174 & 132 & 169 & 247 & 348 & 151 & 205 & 175 & 228  \\
		\textbf{144} & 929 & 1434 & 158 & 201 & 155 & 192 & 306 & 408 & 178 & 228 & 192 & 259  \\
		\textbf{196} & 1000 & 1637 & 180 & 239 & 168 & 224 & 354 & 480 & 198 & 326 & 212 & 299  \\
		\textbf{256} & 1133 & 1805 & 201 & 265 & 183 & 286 & 403 & 536 & 226 & 358 & 233 & 341  \\
  \end{tabular}
\end{adjustbox}
\end{center}
\caption{Preconditioners comparison - the first test case}
\label{tab:Bubble}
\end{table}

The first thing that we can notice from Table~\ref{tab:Bubble} is the important convergence
improvement in case of RAS applied to a~system resulting from a hdG
discretisation in comparison to the RAS applied to the system
resulting from the Taylor-Hood discretisation despite the fact that
the number of degrees of freedom is slightly bigger in the first case. The change in discretisation
presumably leads to better conditioned systems to solve. Also the MRAS
preconditioner with both discretisations perform better than the
standard RAS method which fully justifies the use of the new
IC no matter the discretisation method. Moreover, as expected,
the number of iterations increases with respect to the number of the
subdomains and this behaviour is common to the three preconditioners. It is worth noticing that this increase is slower than the expected linear one.

We also plot the convergence of the error for the different
discretisations in Figure~\ref{fig:Bubble_precontioner_uniform}
and~\ref{fig:Bubble_precontioner_metis}. We observe that in all cases
the MRAS preconditioner~\eqref{eq:dd_MRAS} shortens the
plateau region in the convergence curves significantly which leads,
automatically, to an important reduction in the number of iterations.
\begin{figure}[!ht]
\centering
    \subfloat[Taylor-Hood]{\includegraphics[width=0.47\textwidth]{./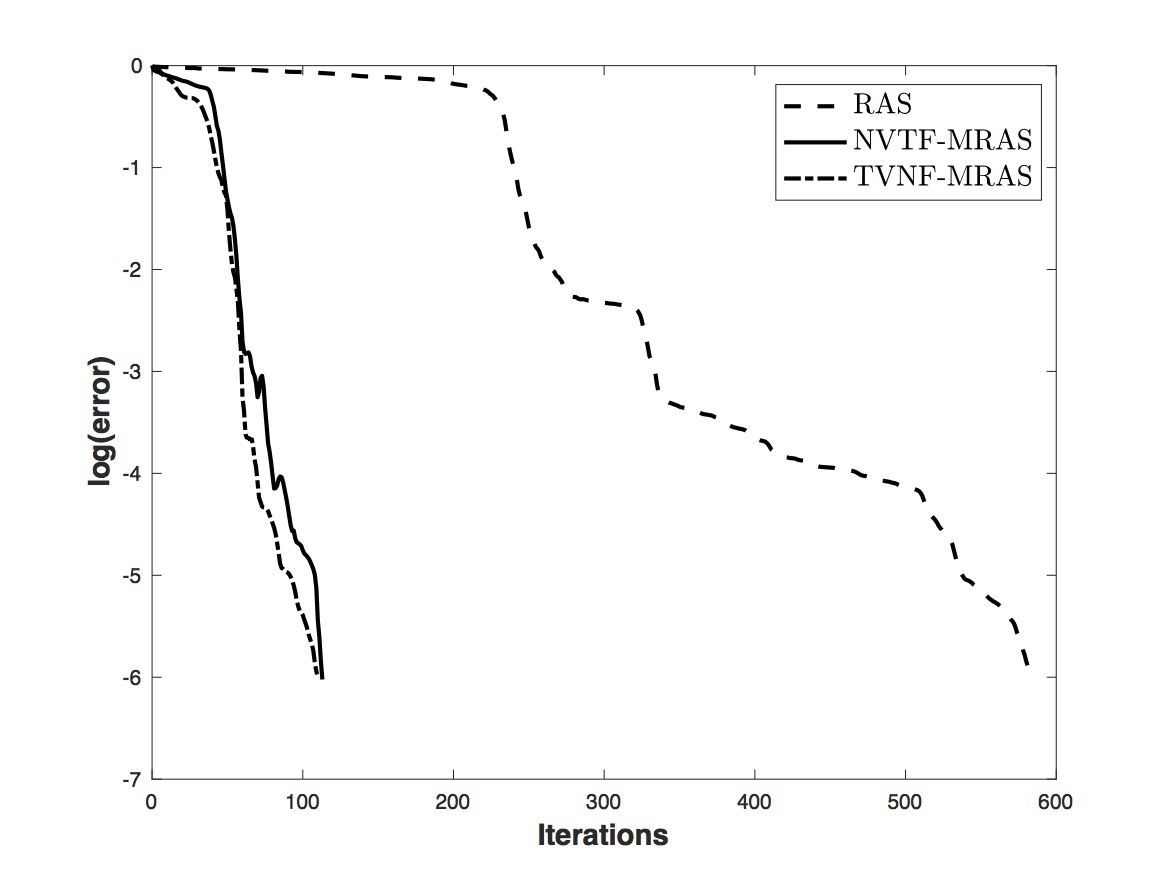}}
    \subfloat[hdG]{\includegraphics[width=0.47\textwidth]{./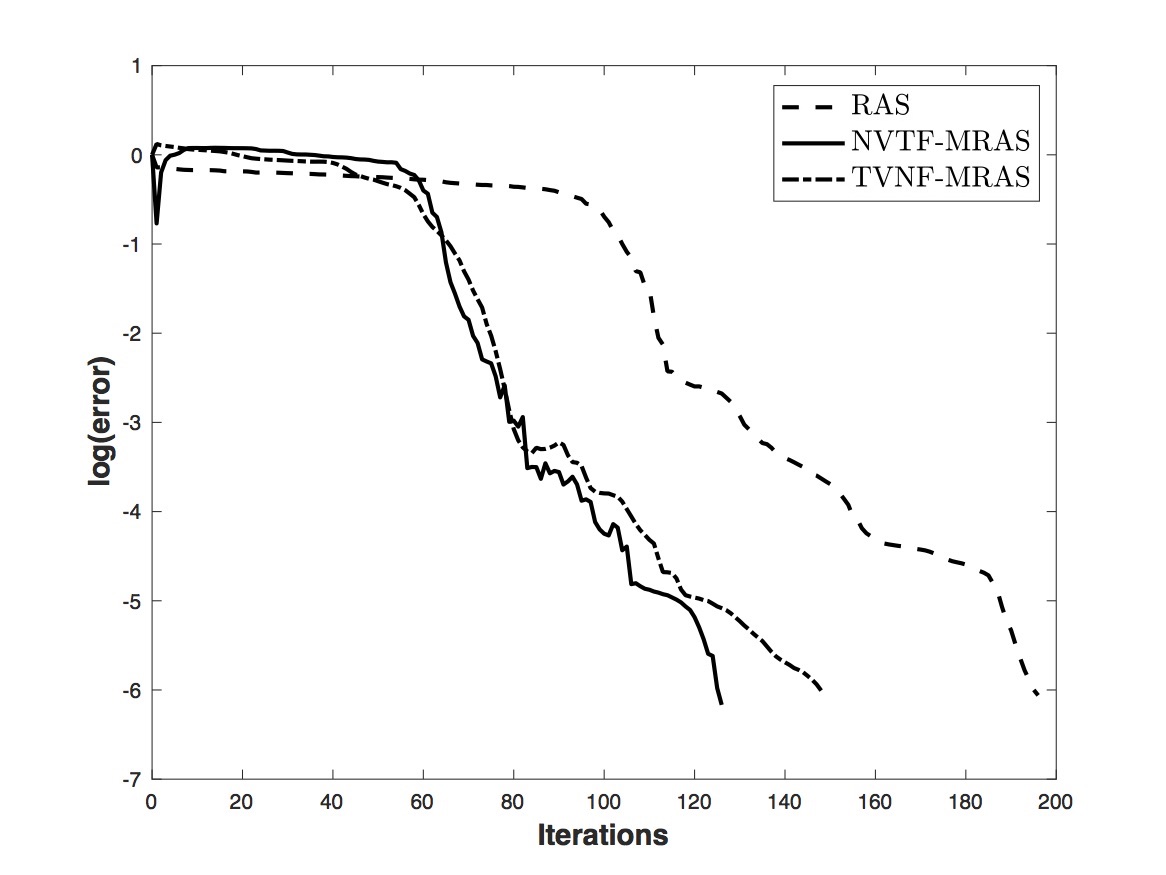}}
		\caption{Convergence of error for uniform decomposition in the $8 \times 8$  subdomains case - the first test case}
  \label{fig:Bubble_precontioner_uniform}
  \end{figure}
	
\begin{figure}[!ht]
\centering
    \subfloat[Taylor-Hood]{%
      \includegraphics[width=0.47\textwidth]{./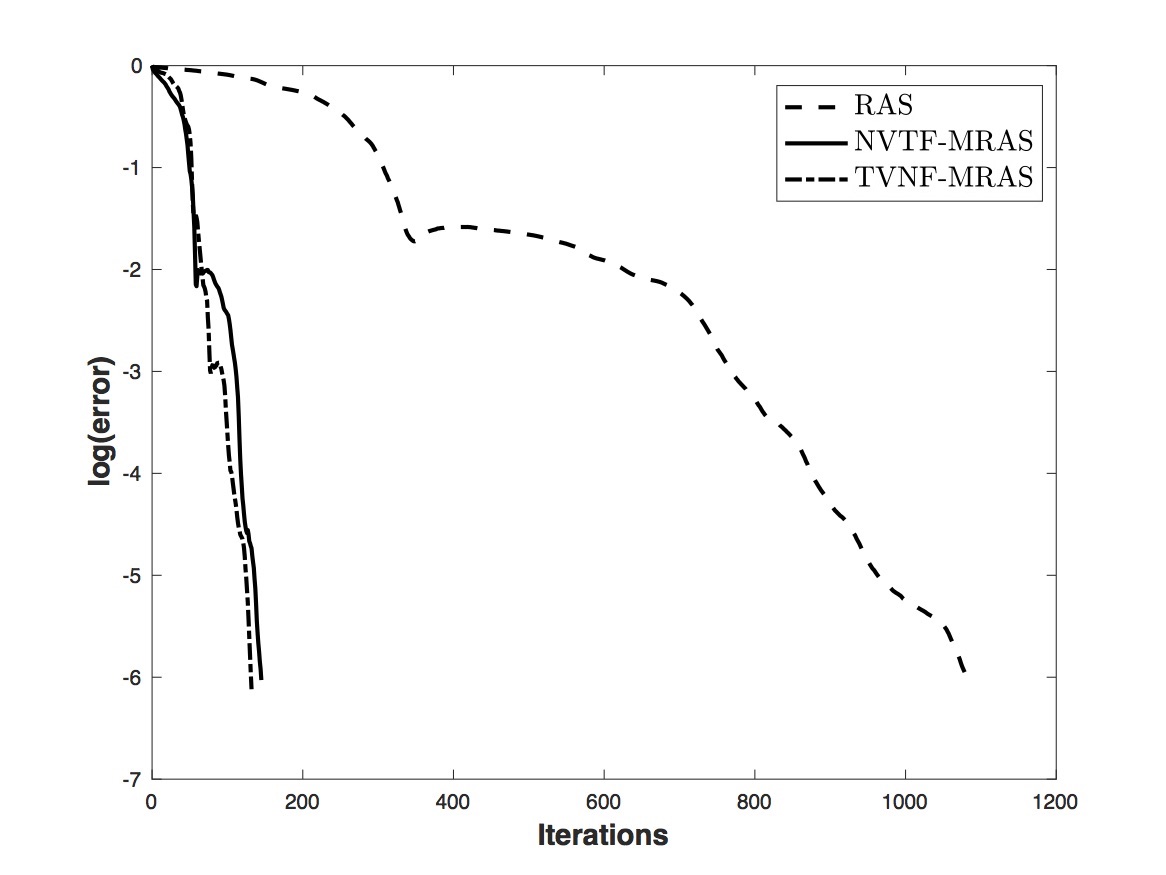}
    }
    \subfloat[hdG]{%
      \includegraphics[width=0.47\textwidth]{./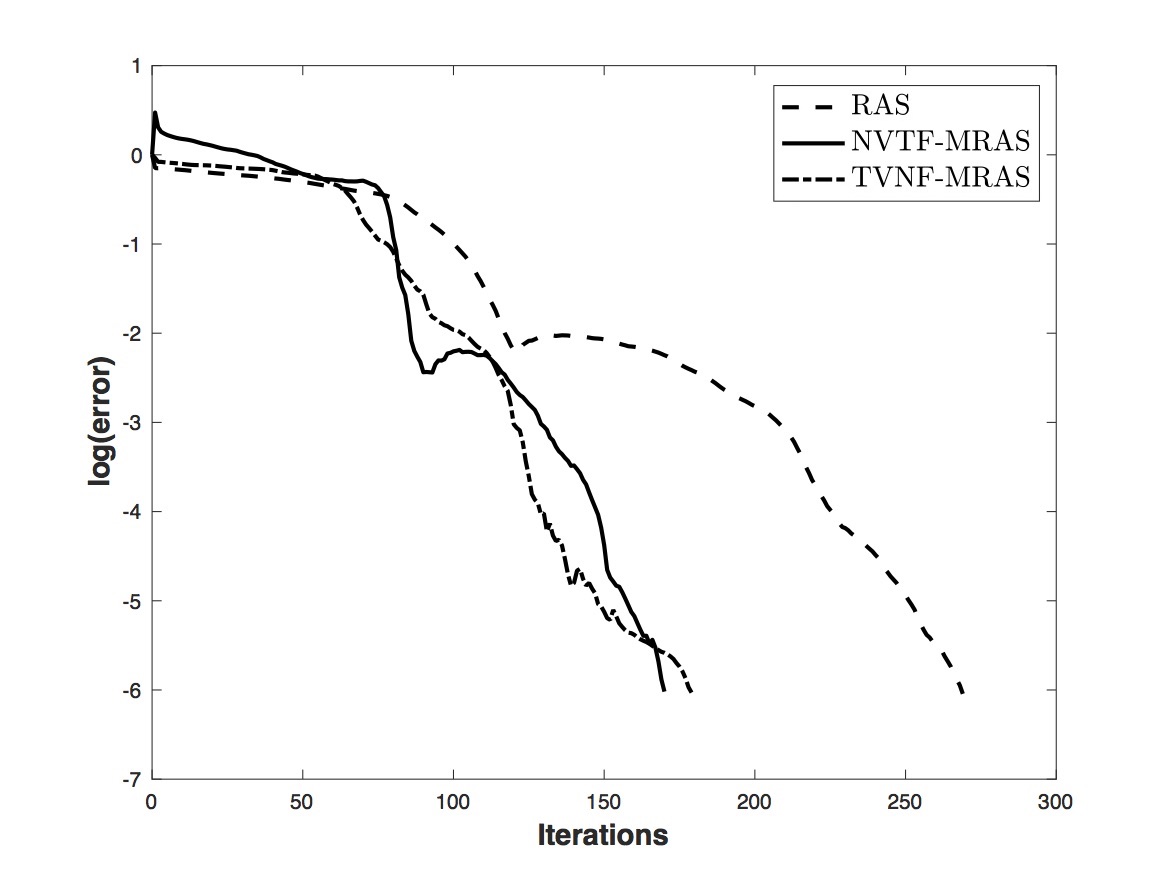}
    }
	\caption{Convergence of error for METIS decomposition in the 64 subdomains case - the first test case}
 \label{fig:Bubble_precontioner_metis}
  \end{figure}
     \newpage
Now we consider the Poiseuille problem and we choose the right hand side $\boldsymbol{f}$ and the TVNF boundary condition such that the exact solution is given by
\begin{align*}
\boldsymbol{u} = [4y(1-y),0]^T, &&
p = 4-8x.
\end{align*}
% The exact solution is depict in the Figure~\ref{fig:Poiseuille}. 
The mesh is again uniform and contains 125 000 triangles for a total of 565 003 degrees
of freedom for the Taylor-Hood discretisation and 689 000 degrees of
freedom for the hdG discretisation. We use a random initial guess for
the GMRES iterative solver. The overlapping decomposition into subdomains can be
uniform (Unif) or generated by METIS (MTS) and it has three layers of mesh size $h$
in the overlap. 
\begin{table}[!ht]
\begin{center}
\begin{adjustbox}{max width=\textwidth}
  \begin{tabular}{c | c c | c c | c c || c c | c c | c c }
    & \multicolumn{6}{ c ||}{\textbf{Taylor-Hood}} & \multicolumn{6}{ c }{\textbf{hdG}} \\
    N & \multicolumn{2}{c |}{RAS} & \multicolumn{2}{c |}{NVTF-MRAS} & \multicolumn{2}{c ||}{TVNF-MRAS} & \multicolumn{2}{c |}{RAS} & \multicolumn{2}{c |}{NVTF-MRAS} & \multicolumn{2}{c}{TVNF-MRAS}  \\
    & Unif & MTS & Unif & MTS & Unif & MTS & Unif & MTS & Unif & MTS & Unif & MTS   \\
    \hline
		\textbf{4} & 117 & 220 & 36 & 39 & 38 & 36 & 58 & 95 & 39 & 47 & 54 & 48  \\
		\textbf{9} & 294 & 421 & 63 & 60 & 54 & 54  & 103 & 129 & 66 & 67 & 77 & 78  \\
		\textbf{16} & 236 & 510 & 59 & 73 & 61 & 68 & 98 & 153 & 65 & 83 & 74 & 94  \\
		\textbf{25} & 300 & 642 & 68 & 89 & 72 & 83 & 120 & 184 & 77 & 103 & 88 & 115  \\
		\textbf{64} & 454 & 916 & 102 & 144 & 100 & 122 & 188 & 279 & 117 & 160 & 120 & 165  \\
		\textbf{100} & 559 & 1088 & 122 & 173 & 116 & 154 & 225 & 349 & 140 & 198 & 138 & 215  \\
		\textbf{144} & 940 & 1251 & 176 & 195 & 145 & 215 & 342 & 395 & 198 & 231 & 183 & 232  \\
		\textbf{196} & 781 & 1346 & 166 & 230 & 146 & 242 & 325 & 486 & 191 & 277 & 173 & 284  \\
		\textbf{256} & 881 & 1553 & 189 & 269 & 159 & 272 & 368 & 538 & 210 & 316 & 195 & 309 
  \end{tabular}
\end{adjustbox}
\end{center}
\caption{Preconditioners comparison - the Poiseuille problem}
\label{tab:Poiseuille}
\end{table}

The conclusions stay the same as in previous example since the reusults form Table~\ref{tab:Poiseuille} are similar to the previous ones. We consider a different problem, however on the same mesh. Hence the global matrix is the same in both cases. Thus, we can notice a reduction  in the number of iterations caused by the increase of the width of the overlap.

We also plot the convergence of the error for the different
discretisations in Figure~\ref{fig:Poiseuille_precontioner_uniform}
and~\ref{fig:Poiseuille_precontioner_metis}. We observe that in all cases once again
the MRAS preconditioner~\eqref{eq:dd_MRAS} shortens the
plateau region in the convergence curves significantly which leads,
automatically, to an important reduction in the number of iterations.
\begin{figure}[!ht]
\centering
    \subfloat[Taylor-Hood]{\includegraphics[width=0.47\textwidth]{./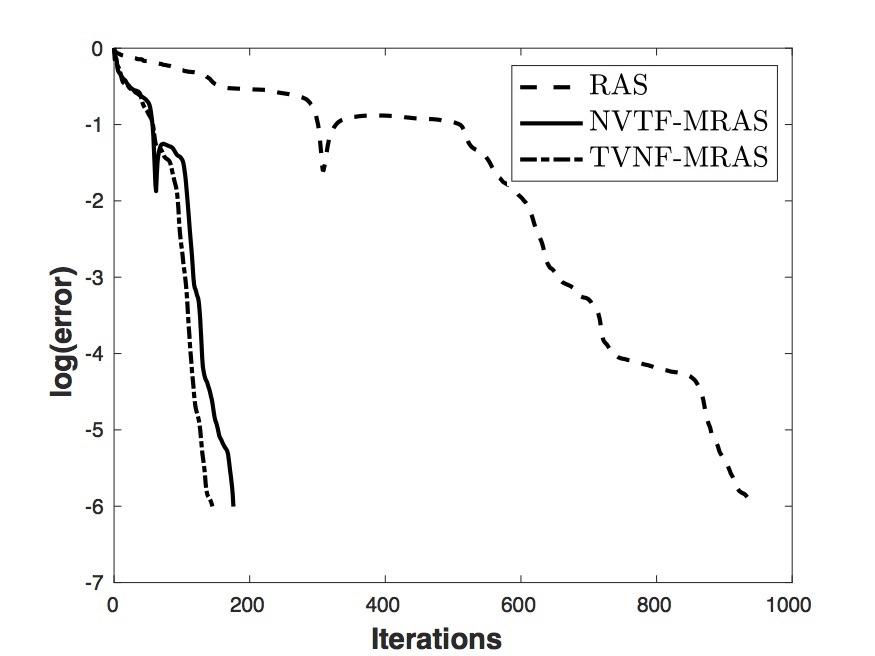}}
    \subfloat[hdG]{\includegraphics[width=0.47\textwidth]{./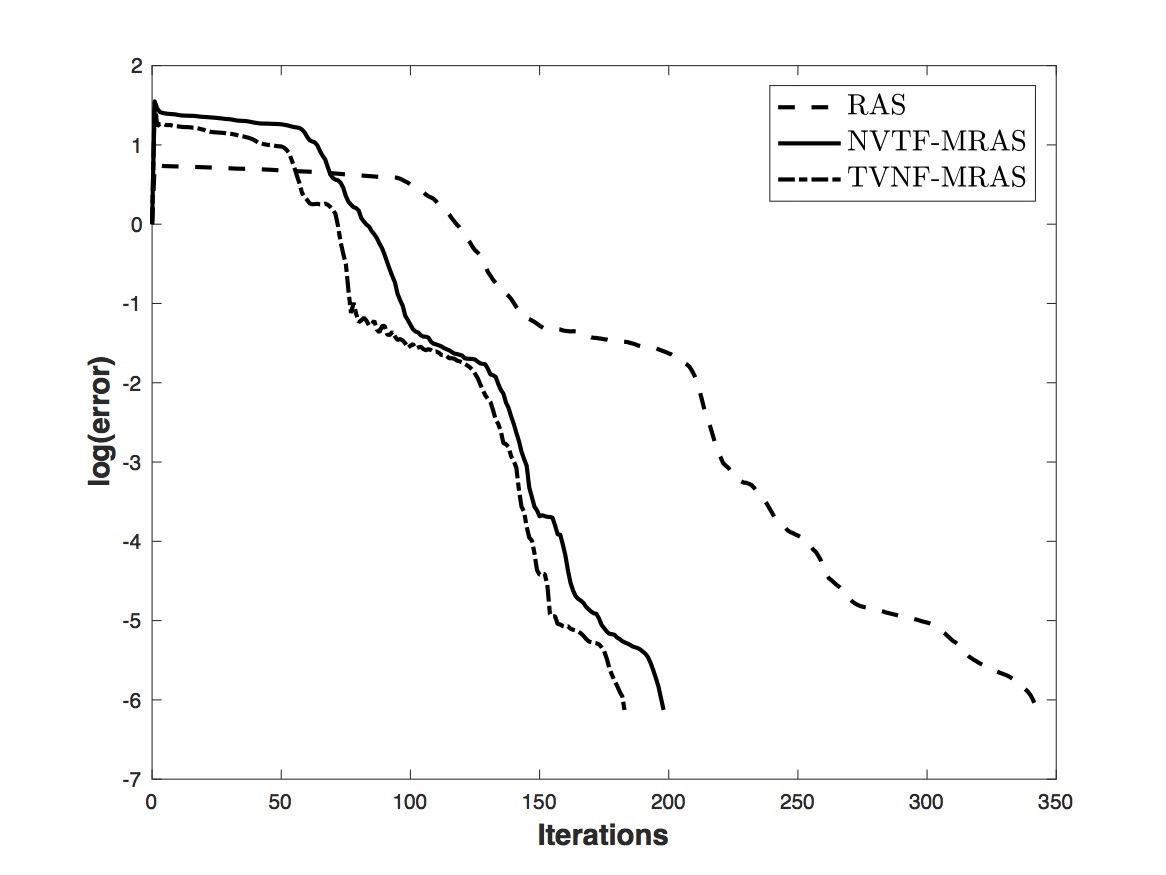}}
		\caption{Convergence of error for uniform decomposition in the $12 \times 12$ subdomains case - the Poiseuille problem}
  \label{fig:Poiseuille_precontioner_uniform}
  \end{figure}

\begin{figure}[!ht]
\centering
    \subfloat[Taylor-Hood]{%
      \includegraphics[width=0.47\textwidth]{./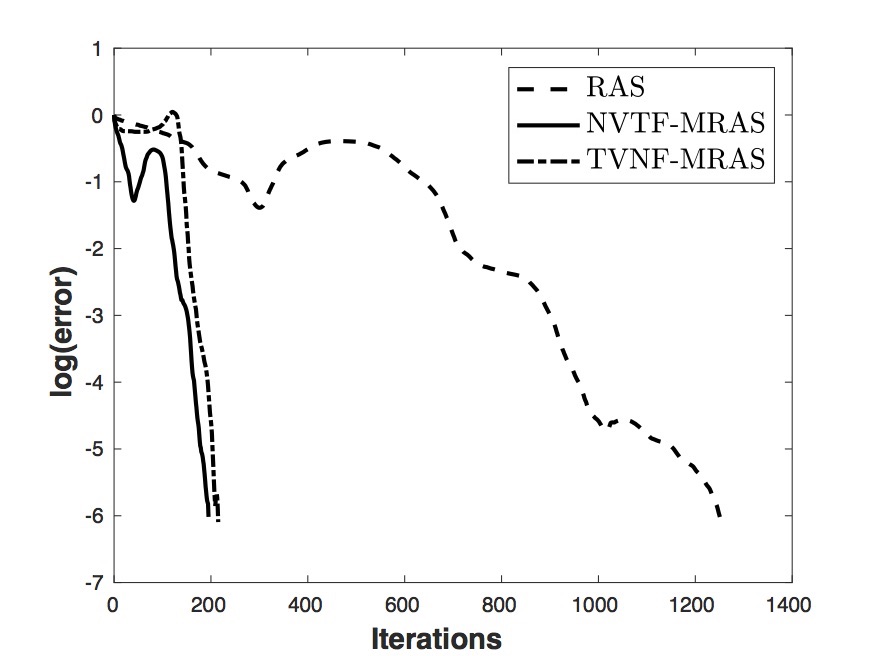}
    }
    \subfloat[hdG]{%
      \includegraphics[width=0.47\textwidth]{./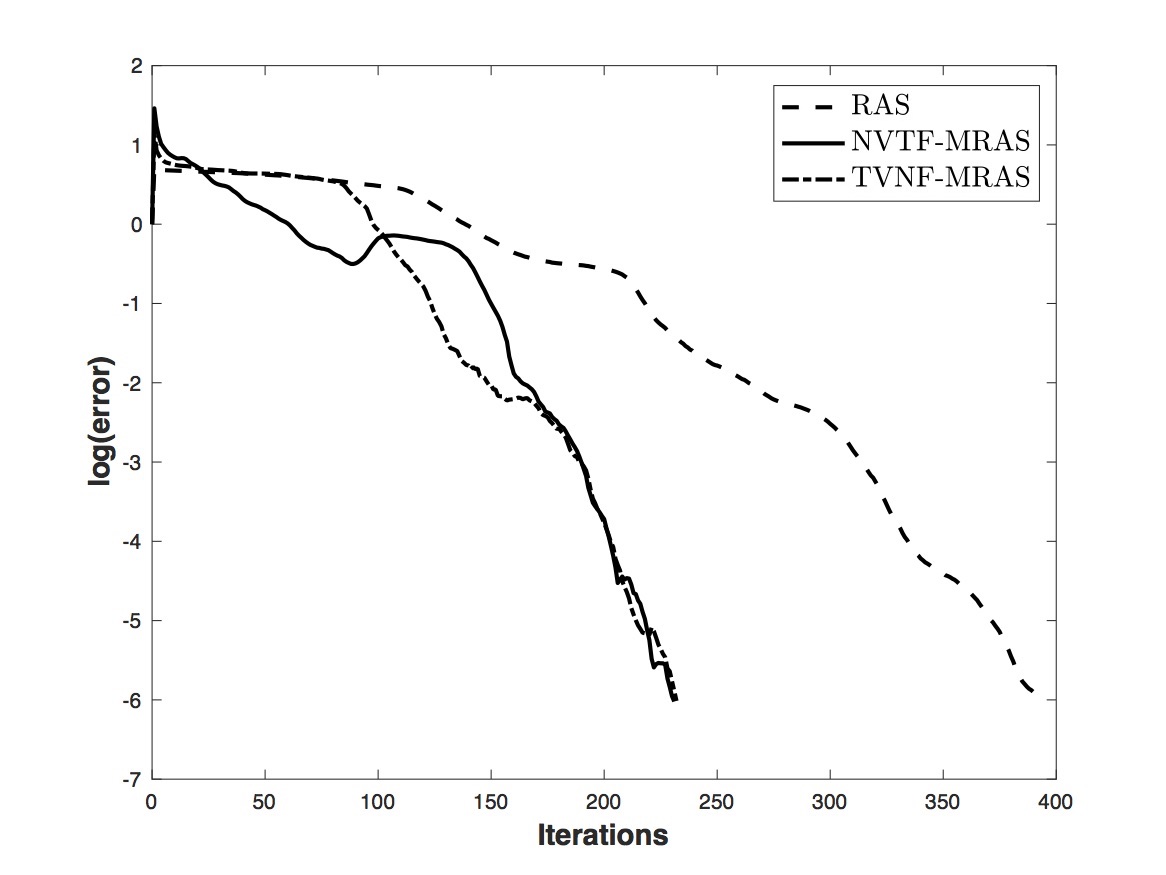}
    }
	\caption{Convergence of error for METIS decomposition in the 144 subdomains case - the Poiseuille problem}
 \label{fig:Poiseuille_precontioner_metis}
  \end{figure}
     \newpage
 The last example is on a T-shaped domain $\Omega = \mbox{Int}\left([0,1.5] \times [0,1] \cup [0.5,1] \times [-1,0]\right)$, and we impose mixed boundary conditions given by
\begin{equation*}
 \left\{
  \begin{array}{c l}
  	\boldsymbol{u}(x,y) = (4y(1-y),0)^T & \mbox{if } x=0 \\
	\sigma_{nn}(x,y) = 0, \quad u_t(x,y) = 0& \mbox{if } x=1.5 \\
	\boldsymbol{u}(x,y) = (0,0)^T & \mbox{otherwise}
  \end{array}
\right. . 
\end{equation*}
\begin{figure}[!ht]
\centering
    \subfloat[Velocity field $\boldsymbol{u}$]{%
      \includegraphics[width=0.49\textwidth]{./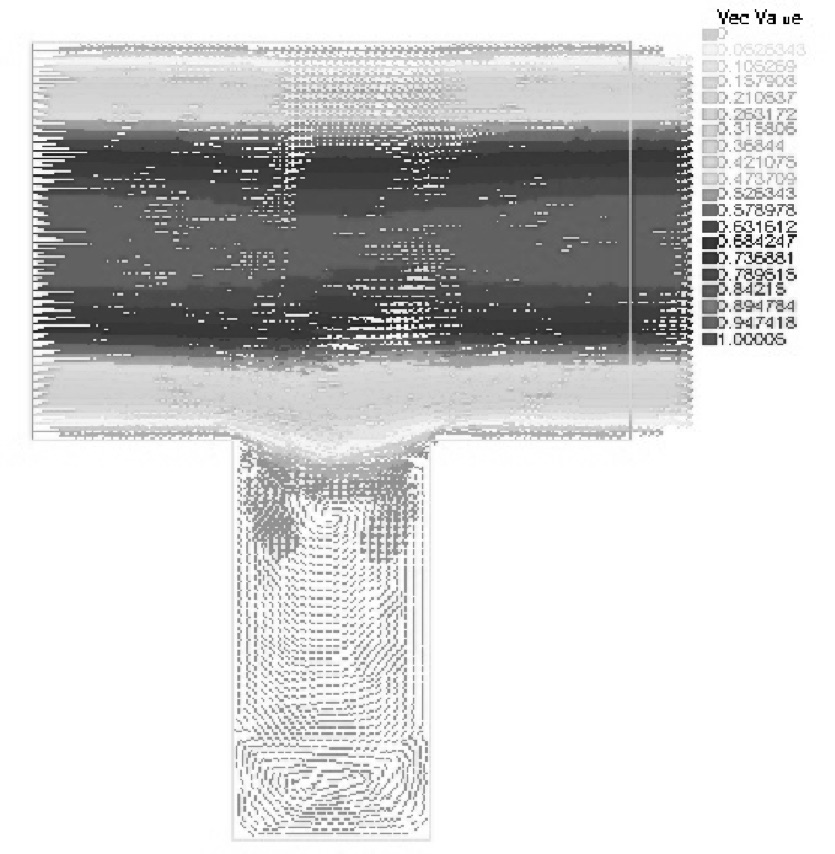}
    }\hfill
    \subfloat[Pressure $p$]{%
      \includegraphics[width=0.49\textwidth]{./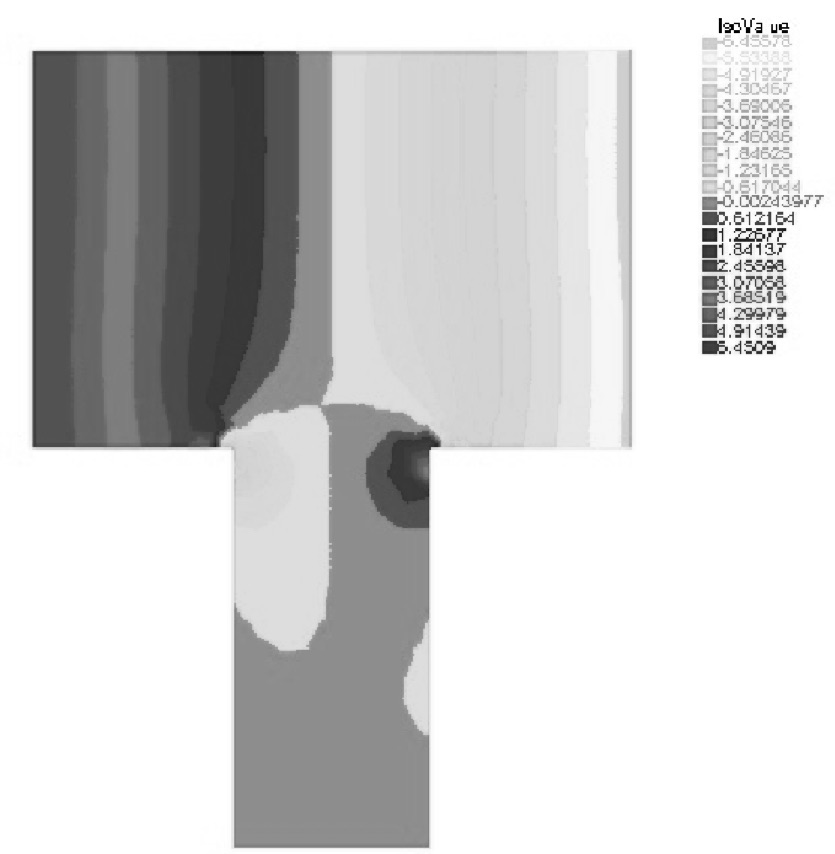}
    }
  \caption{Numerical solution of the T-shaped domain problem}
  \label{fig:Poiseuille_T_shape}
  \end{figure}
%\begin{figure}[!ht]
%\centering
%      \includegraphics[width=0.3\textwidth]{./Poisseuille_T_shape_no_value.eps}
%  \caption{Solution of the T-shaped domain problem}
%  \label{fig:Poiseuille_T_shape}
%  \end{figure}
In Figure~\ref{fig:Poiseuille_T_shape} we plot the numerical solution
obtained with the hdG discretisation using $\tau = 6$ on a coarse mesh. In this case,
we used a mesh containing 379 402 triangles, which gives linear
systems of a size 1 712 352 for the Taylor-Hood
discretisation and 2 089 735 for the hdG
discretisation. The initial guess in the GMRES iterative solver is
zero. The overlapping decomposition into subdomains is generated by METIS and it has two layers of mesh size $h$
in the overlap. 
\begin{table}[!ht]
\begin{center}
  \begin{tabular}{c | c | c | c || c | c | c }
    & \multicolumn{3}{ c ||}{\textbf{Taylor-Hood}} & \multicolumn{3}{ c }{\textbf{hdG}} \\
    N & {RAS} & {NVTF-MRAS} & {TVNF-MRAS} & {RAS} & {NVTF-MRAS} & {TVNF-MRAS}  \\
   \hline
		\textbf{50} & 752 & 121 & 105 & 209 & 132 & 135 \\
		\textbf{100} & 903 & 175 & 147 & 307 & 190 & 197 \\
		\textbf{200} & 1272 & 245 & 211 & 441 & 264 & 281 \\
		\textbf{400} & 1747 & 341 & 342 & 613 & 366 & 399 \\
		\textbf{800} & 2433 & 469 & 417 & 863 & 650 & 549 
  \end{tabular}
\end{center}
\caption{Preconditioners comparison - the T-shaped domain problem}
\label{tab:T_shape}
\end{table}

According to Table~\ref{tab:T_shape} the conclusions remain the same, that is the standard RAS method performs far better when
applied to a hdG discretisation with respect to a Taylor-Hood one and
the MRAS preconditioners are better than the standard RAS
preconditioner for both discretisations. Finally, we also plot the convergence of the error of the different discretisations in Figure~\ref{fig:Poiseuille_T_shape_precontioner_metis}. And again, in all cases the MRAS preconditioner~\eqref{eq:dd_MRAS} shortens the plateau region.
\begin{figure}[!ht]
\centering
    \subfloat[Taylor-Hood]{%
      \includegraphics[width=0.45\textwidth]{./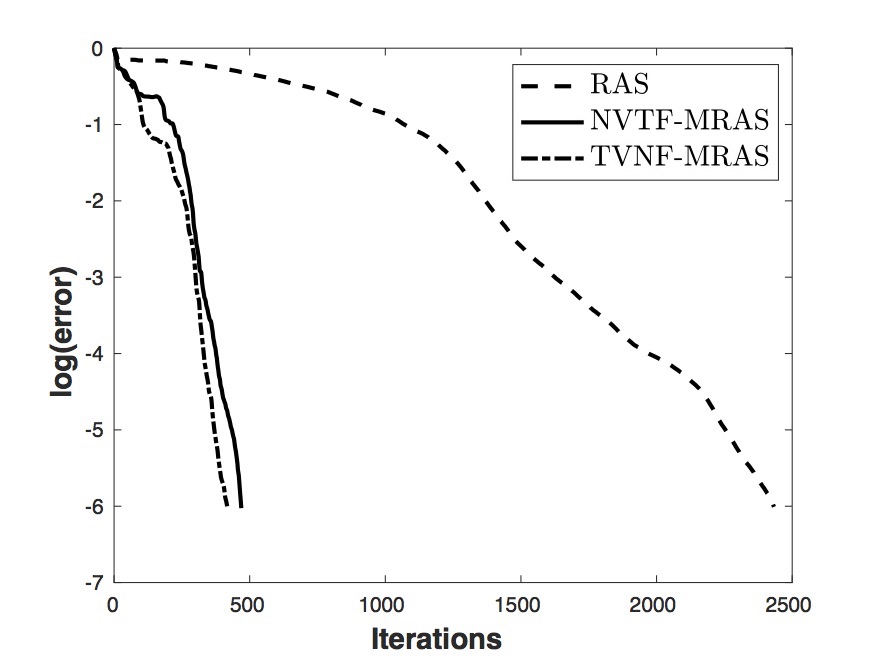}
    }
    \subfloat[hdG]{%
      \includegraphics[width=0.45\textwidth]{./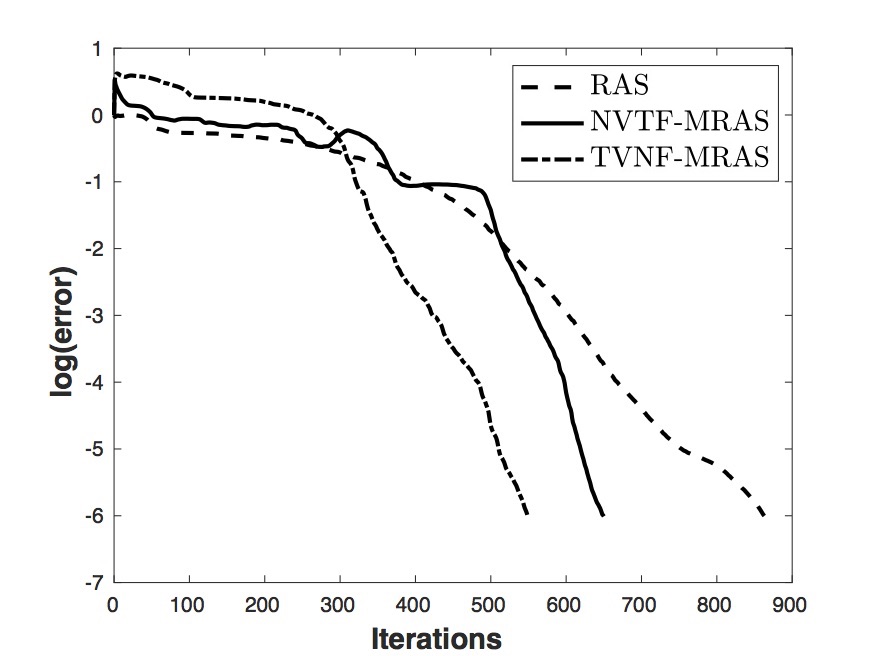}
    }
		\caption{Convergence of error for METIS decomposition in the 800 subdomains case - the T-shaped domain problem}
  \label{fig:Poiseuille_T_shape_precontioner_metis}
  \end{figure}

\newpage

%%%%%%%%%%%%%%%%%%%%%%%%%%%%%%%%%%%%%%%%%%%%%%%%%%%%%%%%%%%%%%%%%%%%%%%%%%%%%%%%%%%%%%%%%%%%%%%%%%%%%%%%%%%%
% Conclusion
%%%%%%%%%%%%%%%%%%%%%%%%%%%%%%%%%%%%%%%%%%%%%%%%%%%%%%%%%%%%%%%%%%%%%%%%%%%%%%%%%%%%%%%%%%%%%%%%%%%%%%%%%%%%

\section{Conclusion}

In this paper we introduced a hdG method for the Stokes equations that naturally discretises non standard boundary value problems such as those with TVNF and NVTF boundary conditions. This approach can be extended naturally to the case of incompressible, or nearly incompressible, elasticity. We proved the well-posedness and convergence with respect to the norm~\eqref{eq:TVNF_norm} of this method and in the numerical experiments from Section~\ref{sec:hdG_numerics} we validated the theory and observed the optimal convergence.

To solve the discretised problem we introduced two different kinds of preconditioners with non standard boundary conditions whose optimality has been proved by algebraic techniques. We compared the newly introduced preconditioners to the more standard RAS preconditioner and numerical tests from Section~\ref{sec:dd_numerics} clearly show their superiority for different test cases in two space
dimensions. Moreover the hdG discretisation has an important advantage over Taylor-Hood as the RAS preconditioner already performs far better.

We observed, as expected, that the Schwarz preconditioners are not scalable with respect to the number of subdomains. However, this can be fixed by using an appropriate coarse spaces~\cite[Chapter 4]{MR3450068}. A suitable choice of a coarse space will be a subject of future research.

\section*{Acknowledgements}
This research was supported supported by the Centre for Numerical Analysis and Intelligent Software (NAIS). We thank Fr\'{e}d\'{e}ric Hecht from Laboratory J.L. Lions for comments that greatly improved the FreeFem++ codes.

%%%%%%%%%%%%%%%%%%%%%%%%%%%%%%%%%%%%%%%%%%%%%%%%%%%%%%%%%%%%%%%%%%%%%%%%%%%%%%%%%%%%%%%%%%%%%%%%%%%%%%%%%%%%
% BILBIOGRAPHY
%%%%%%%%%%%%%%%%%%%%%%%%%%%%%%%%%%%%%%%%%%%%%%%%%%%%%%%%%%%%%%%%%%%%%%%%%%%%%%%%%%%%%%%%%%%%%%%%%%%%%%%%%%%%

\thispagestyle{empty}
  \bibliographystyle{alpha}
\bibliography{HDG}

\end{document}